\def\R{\mathbb{R}}
\def\E{\mathbb E}
\def\EXP{{\E}}
\def\I{{\mathbb I}}
\def\Rank{\mathop{\rm Rank}}
\def\argmax{\mathop{\rm arg\, max}}
\def\sgn{\mathop{\rm sgn}}
\newcommand{\X}{{\cal X}}
\newcommand{\Y}{{\cal Y}}
\def\N{{\cal N}}
\def\S{{\cal S}}
\def\L{{\cal L}}
\def\V{{\cal V}}
\def\Z{{\cal Z}}
\def\eps{{\varepsilon}}
\def\Var{{\rm Var}}
\newcommand{\RR}{\mathbb{R}}
\newcommand{\NN}{\mathbb{N}}
\def\roc{\rm ROC}
\def\auc{\rm AUC}
\def\dcg{\mathop{\rm DCG}}
\newcommand{\bX}{\mathbf{X}}
\newcommand{\bY}{\mathbf{Y}}
\newcommand{\bZ}{\mathbf{Z}}
\newcommand{\bF}{F}
\newcommand{\iid}{\textit{i.i.d.}}
\newcommand{\cdf}{\textit{c.d.f.}\;}
\newcommand{\df}{\textit{d.f.}}
\newcommand{\rv}{\textit{r.v.}}
\newcommand{\ie}{\textit{i.e.}\;}
\newcommand{\CF}{\textit{cf}\;}
\newcommand{\wrt}{\textit{w.r.t.}\;}
\newcommand{\st}{\textit{s.t.}\;}
\newcommand{\eg}{\textit{e.g.}}
\newcommand{\resp}{\textit{resp. }}
\newcommand{\IFF}{\textit{iff}}
\newcommand{\vc}{{\sc VC}}
\newtheorem{hyp}{\textit{Assumption}} 
\newtheorem{theorem}{Theorem.}
\newtheorem{corollary}[theorem]{Corollary.}
\newtheorem{lemma}[theorem]{Lemma.}
\newtheorem{proposition}[theorem]{Proposition.}
\newtheorem{definition}[theorem]{Definition.}
\newtheorem{remark}{Remark}
\numberwithin{equation}{section}
\theoremstyle{plain}
\title{Concentration Inequalities for Two-Sample Rank Processes with Application to Bipartite Ranking}
\date{}
\author[1]{Stephan Cl\'emen\c{c}on}
\author[2]{, Myrto Limnios  \thanks{Corresponding author. Authors in alphabetical order.}}
\author[2]{, Nicolas Vayatis}
\affil[1]{ \small \url{stephan.clemencon@telecom-paris.fr}\\
	Telecom Paris, LTCI, Institut Polytechnique de Paris\\
	19 place Marguerite Perey, Palaiseau, 91120, France.}
\affil[2]{\url{myrto.limnios@ens-paris-saclay.fr}, \url{ nicolas.vayatis@ens-paris-saclay.fr}\\
	Université Paris-Saclay, ENS Paris-Saclay\\CNRS, Centre Borelli, F-91190 Gif-sur-Yvette, France.}
\begin{document}
\maketitle

		\begin{abstract}
The $\roc$ curve is the gold standard for measuring the performance of a test/scoring statistic regarding its capacity to discriminate between two statistical populations in a wide variety of applications, ranging from anomaly detection in signal processing to information retrieval, through medical diagnosis.
Most practical performance measures used in scoring/ranking applications such as the $\auc$, the local $\auc$, the  $p$-norm push, the $\dcg$ and others, can be viewed as summaries of the $\roc$ curve. In this paper, the fact that most of these empirical criteria can be expressed as \textit{two-sample linear rank statistics} is highlighted and concentration inequalities for collections of such random variables, referred to as \textit{two-sample rank processes} here, are proved, when indexed by {\sc VC} classes of scoring functions. Based on these nonasymptotic bounds, the generalization capacity of empirical maximizers of a wide class of ranking performance criteria is next investigated from a theoretical perspective. It is also supported by empirical evidence through convincing numerical experiments.   
		\end{abstract}
	
\section{Introduction}

In the context of ranking, a variety of performance measures can be considered.
In the simplest framework of bipartite ranking, where two independent $\iid$ samples $\bX_1,\;\ldots,\; \bX_n$  and  $\bY_1,\;\ldots,\; \bY_m$  defined on the same probability space $(\Omega,\; \mathcal{F},\; \mathbb{P})$, valued in the same space $\Z$, say $\mathbb{R}^d$ with $d\geq 1$ for instance, and drawn from probability distributions $G$ and $H$ respectively (referred to as the 'positive distribution' and the 'negative distribution' respectively), the goal pursued is to learn a preorder on $\Z$ defined through a scoring function $s:\Z \rightarrow \RR$ (which transports the natural order on the real line onto the feature space $\Z$) such that, for any random observation $\bZ \in \Z$ sampled from a distribution that is equal either to the 'positive distribution' or to the 'negative one', the larger the score $s(z)$, the likelier it is drawn from the 'positive distribution' $G$. Though easy to formulate, this simple framework encompasses many practical problems from the design of search engines in Information Retrieval (in this case, for a specific request, $G$ is the distribution of the relevant digitized documents, while $H$ is that of the irrelevant ones) to the elaboration of decision support tools in personalized medicine for instance. In spite of its simplicity  there is not one and only one natural scalar criterion for evaluating the performance of a scoring rule $s(z)$, but many possible options. 
The \textit{Receiving Operator Characteric} curve (the $\roc$ curve in abbreviated form), \textit{i.e.} the PP-plot of the false positive rate \textit{vs} the true positive rate: $$t\in\mathbb{R}\mapsto \left(\mathbb{P}\{s(\bY)>t  \},\; \mathbb{P}\{s(\bX)>t  \}\right),$$ denoting by $\bX$ and $\bY$ two generic $\rv$ with distributions $G$ and $H$ respectively, provides an exhaustive description of the performance of any scoring rule candidate $s$. However, its functional
nature renders direct optimization strategies rather complex, see \textit{e.g.} \cite{CV10CA}. \textit{Empirical risk minimization} methods (ERM) are thus generally based on summaries of the $\roc$ curve, which take the form of empirical risk functionals where the averages involved are no longer taken over $\iid$ sequences.
The most popular choice is undoubtedly the $\auc$ criterion ($\auc$ standing for \textit{Area Under the $\roc$ Curve}), see \cite{AGHHPR05} or \cite{CLV08} for instance,
but when focus is on top-ranked instances, various choices can be considered, \textit{e.g.} the Discounted Cumulative Gain or DCG (see \cite{CosZha06b}),
the $p$-norm push (see \cite{Rud06}), the local $\auc$ (refer to \cite{CV07}) or other variants such as those recently introduced in \cite{MW16}. The present paper starts from the simple observation that most of these summary criteria have a common feature: they belong to the class of \textit{two-sample linear rank statistics}. Such statistics have been extensively
studied in the mathematical statistics literature because of their optimality properties in hypothesis testing,
see \cite{HajSid67}. They are widely used in order to test whether two samples are drawn from the same distribution against the alternative stipulating that the distribution of one of the samples is stochastically larger than the other. For instance, the empirical counterpart of the $\auc$ of a scoring function $s(z)$ corresponds to the popular Mann-Withney-Wilcoxon statistic based on the two (univariate) samples $s(\bX_1),\;\ldots,\; s(\bX_n)$  and  $s(\bY_1),\;\ldots,\; s(\bY_m)$. Other rank statistics can be considered, corresponding to other ways of measuring how the distribution of the 'positive score' $s(\bX)$ is (possibly) stochastically larger than that of the 'negative score' $s(\bY)$. Now, in the statistical learning view, with the
importance of excess risk bounds, the \textit{Empirical Risk Minimization} paradigm must be revisited and new problems, mainly related to the uniform control of the fluctuations of collections of two-sample linear rank statistics, termed rank processes throughout the article, and to the measure of the complexity of nonparametric classes of scoring functions, come up.
The arguments required to deal with risk functionals based on two-sample linear rank statistics have been sketched
in \cite{CV07} in a very special case.
\par In the present paper, we relate two-sample linear rank statistics to performance measures relevant for the ranking problem by showing that the target of ranking algorithms corresponds to optimal ordering rules in this sense and show that the generic structure of two-sample linear rank statistics as an orthogonal decomposition after projection onto the space of sums of $\iid$ random variables is the key to all statistical results related to maximizers of such criteria: consistency, rates of convergence or model selection. Notice incidentally that the empirical $\auc$ is also a $U$-statistic and a decomposition method akin to that considered in this paper (though much less general) has been used in order to handle this specific dependence structure in \cite{CLV08}. In this article, concentration properties of two-sample rank processes (\textit{i.e.} collections of two-sample linear rank statistics) are investigated using the linearization technique aforementioned. While interesting in themselves, the concentration inequalities established for this class of stochastic processes, when indexed by Vapnik-Chervonenkis classes (abbreviated with {\sc VC}-classes) of scoring functions, are next applied to study the generalization capacity of empirical maximizers of a large collection of performance criteria based on two-sample linear rank statistics.
Notice finally that a preliminary version of this work is briefly outlined in the conference paper \cite{CV08NIPS1}. This article presents a much deeper analysis of bipartite ranking via maximization of two-sample linear rank statistics. In particular, it offers a complete and detailed study of the concentration properties of two-sample rank processes (in a slightly different framework, stipulating that two independent $\iid$ samples, positive and negative, are observed, rather than classification data), provides model selection results and, from a practical perspective, tackles the issue of smoothing the risk functionals under study here with statistical learning guarantees.

\par The paper is organized as follows. In Section \ref{sec:motivation}, the main notations are set out, the bipartite ranking problem is formulated as a statistical learning task in a rigorous probabilistic framework and the concept of two-sample linear rank statistic is briefly recalled. 
It is also explained that, unsurprisingly, natural performance criteria in bipartite ranking are of the form of two-sample (linear) rank statistics. Concentration results for 
rank processes, are established in Section \ref{sec:Rproc}. By means of the latter, performance of bipartite ranking rules obtained by maximizing two-sample linear rank statistics are investigated in Section \ref{sec:application}. Finally, Section \ref{sec:num} displays illustrative experimental results, supporting the theoretical analysis carried out in the present article. Proofs, technical details and additional numerical results are deferred to the Appendix section.

\section{Motivation and Preliminaries}\label{sec:motivation}
We start with recalling key notions pertaining to $\roc$ analysis and bipartite ranking, which essentially motivates the theoretical analysis carried out in the subsequent section. We next recall at length the definition of two-sample linear rank statistics, which have been intensively used to design statistical (homogeneity) testing procedures in the univariate setup, and finally highlight that many scalar summaries of empirical $\roc$ curves, commonly used as ranking performance criteria, are precisely of this form. Here and throughout, the indicator function of any event $\mathcal{E}$ is denoted by $\mathbb{I}\{ \mathcal{E} \}$, the Dirac mass at any point $x$ by $\delta_x$, the generalized inverse of any cumulative distribution function $W(t)$ on $\R\cup\{+\infty\}$ by $W^{-1}(u)=\inf\{t\in ]-\infty,\;+\infty]:\; W(t)\geq u\}$, $u\in [0,1]$. We also denote the floor and ceiling functions by $u \in \mathbb{R}\mapsto \lfloor u \rfloor$ and by $u \in \mathbb{R}\mapsto \lceil u \rceil$ respectively.
\subsection{Bipartite Ranking and $\roc$ Analysis} \label{subsec:biproc}
As recalled in the Introduction section, the goal of bipartite ranking is to learn, based on independent 'positive' and 'negative' random samples $\{\bX_1,\;\ldots,\; \bX_n\}$ and  $\{\bY_1,\;\ldots,\; \bY_m\}$, how to score any new observations $\bZ_1,\; \ldots,\; \bZ_k$, being each either 'positive' or else 'negative', that is to say drawn either from $G$ or else from $H$, without prior knowledge, so that positive instances are mostly at the top of the resulting list with large probability. A natural way of defining a total preorder\footnote{A preorder $\preccurlyeq$ on a set $\Z$ is a reflexive and transitive binary relation on $\Z$. It is said to be \textit{total}, when either $z\preccurlyeq z'$ or else $z'\preccurlyeq z$ holds true, for all $(z,z')\in\Z^2$.} on $\Z$ is to map it with the natural order on $\mathbb{R}\cup \{+\infty \}$ by means of a \textit{scoring rule}, \textit{i.e.} a measurable mapping $s:\Z \rightarrow ]-\infty,\; \infty]$. By $\mathcal{S}$ is denoted the set of all scoring rules. 
It is by means of $\roc$ analysis that the capacity of a scoring rule candidate $s(z)$ to discriminate between the positive and negative statistical populations is generally evaluated.
 \medskip
 
\noindent {\bf $\roc$ curves.} The $\roc$ curve is a gold standard to describe the dissimilarity between two univariate probability distributions $G$ and $H$. This criterion of functional nature,  $\roc_{H,G}$, can be defined as the parametrized curve in $[0,1]^2$:
$$ t\in \mathbb{R}\mapsto \left( 1-H(t),\; 1-G(t)  \right),$$
where possible jumps 
are connected by line segments, so as to ensure that the resulting curve is continuous. With this convention, one may then see the $\roc$ curve related to the pair of $\df$ $(H,G)$ as the graph of a c\`ad-l\`ag (\textit{i.e.} right-continuous and left-limited) non decreasing mapping valued in $[0,1]$, defined by:
$$
\alpha \in (0,1) \mapsto  1-G\circ H^{-1}(1-\alpha)~,
$$
at points $\alpha$ such that $G\circ H^{-1}(1-\alpha)=1-\alpha$. Denoting by $\Z_H$ and $\Z_G$ the supports of $H$ and $G$ respectively, observe that it connects the point $(0,1-G( \Z_H))$ to $(H(\Z_G),1)$ in the unit square $[0,1]^2$ and that, in absence of plateau 
(which we assume here for simplicity, rather than restricting the feature space to $G$'s support), the curve $\alpha\in (0,1)\mapsto \roc_{G,H}(\alpha)$ is the image of $\alpha\in (0,1)\mapsto \roc_{H,G}(\alpha)$ by the reflection with the main diagonal of the Euclidean plane (\textit{i.e.} the line of equation '$\beta=\alpha$') as axis. Notice that the curve $\roc_{H,G}$ coincides with the main diagonal of $[0,1]^2$ if and only if the two distributions $H$ and $G$ are equal. Hence, the concept of $\roc$ curve offers a visual tool to examine the differences between two distributions in a pivotal manner, see Fig. \ref{fig:ex1}. For instance, the univariate distribution $G(dt)$ is stochastically larger\footnote{Given two distribution functions $H(dt)$ and $G(dt)$ on $\R\cup\{+\infty\}$, it is said that $G(dt)$ is \textit{stochastically larger} than $H(dt)$ $\IFF$ for any $t\in\mathbb{R}$, we have $G(t)\leq H(t)$. We then write: $H\leq_{sto}G$. Classically, a necessary and sufficient condition for $G$ to be stochastically larger than $H$ is the existence of a coupling $(\bX,\; \bY)$ of $(G,H)$, \textit{i.e.} a pair of random variables defined on the same probability space with first and second marginals equal to $H$ and $G$ respectively, such that $\bX\leq \bY$ with probability one.} than $H(dt)$ if and only if the curve $\roc_{H,G}$ is everywhere above the main diagonal and $\roc_{H,G}$ coincides with the left upper corner of the unit square $\IFF$ the essential supremum of the distribution $H$ is smaller than the essential infimum of the distribution $G$. 
\begin{figure}[!h]
\centering
\begin{tabular}{cc}
\parbox{6cm}{
\begin{center}
\includegraphics[width=5cm]{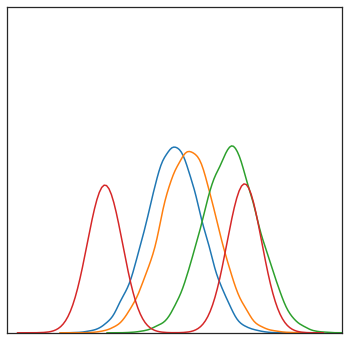}\\
\tiny{a. Probability distributions}
\end{center}
}
\parbox{6cm}{
\begin{center}
\includegraphics[width=5.1cm]{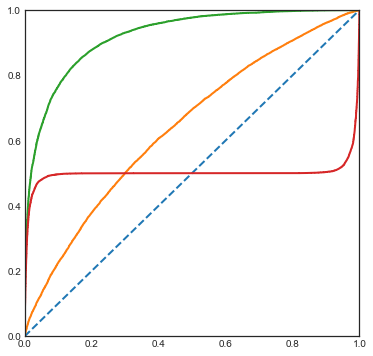}\\
\tiny{b. $\roc$ curves}
\end{center}
}\\
\end{tabular}
\caption{Examples of pairs of distributions and their related $\roc$ curves. The 'negative' distribution $H$ is represented in blue and three examples of 'positive' distributions are represented in red, orange and green, like the associated $\roc$ curves.}
\label{fig:ex1}
\end{figure}
Another advantage of the $\roc$ curve lies in the probabilistic interpretation of the popular $\roc$ curve summary, referred to as the Area Under the $\roc$ Curve ($\auc$ in short)
\begin{equation}\label{eq:auc}
\auc_{H,G}\overset{def}{=}\int_{0}^1\roc_{H,G}(\alpha)d\alpha=\mathbb{P}\left\{Y<X \right\}+\frac{1}{2}\mathbb{P}\left\{X=Y \right\},
\end{equation}
where $(X,Y)$ denotes a pair of independent $\rv$'s with respective marginal distributions $H$ and $G$.
\medskip

\noindent {\bf Bipartite Ranking as $\roc$ curve optimization.} Going back to the multivariate setup, where $H$ and $G$ are probability distributions on $\Z$, say $\Z = \RR^d$ with arbitrary dimension $d\geq 1$, the goal pursued in bipartite ranking can be phrased as that of building a scoring rule $s(z)$ such that the (univariate) distribution $G_s$ of $s(\bX)$ is 'as stochastically larger as possible' than the the distribution $H_s$ of $s(\bY)$.  Hence, the capacity of a candidate $s(z)$ to discriminate between the positive and negative statistical populations can be evaluated by plotting the $\roc$ curve $\alpha\in (0,1)\mapsto \roc(s,\alpha)=\roc_{H_s,G_s}(\alpha)$: the closer to the left upper corner of the unit square the curve $\roc(s,.)$, the better the scoring rule $s$. Therefore, the $\roc$ curve  conveys a partial preorder on the set of all scoring functions: for all pairs of scoring functions $s_1$ and $s_2$, one says that $s_2$ is more accurate than $s_1$ when $\roc(s_1,\alpha)\leq \roc(s_2,\alpha)$ for all $\alpha\in [0,1]$. It follows from a standard Neyman-Pearson argument that the most accurate scoring rules are increasing transforms of the likelihood ratio $\Psi(z)=dG/dH(z)$. Precisely, it is shown in \cite{CV09ieee} (see Proposition 2 therein) that the optimal scoring rules are the elements of the set:
\begin{equation}\label{eq:scoresetopt}
\S^*=\left\{s\in \S \; \text{ \textit{s.t.} for all } z,\; z' \text{ in } \mathbb{R}^d:\;\; \Psi(z)<\Psi(z') \Rightarrow s^*(z)<s^*(z')    \right\}.
\end{equation}
We denote by $\roc^*(.)=\roc(\Psi,\; .)$ and recall incidentally that this optimal curve is non-decreasing and concave and thus always above the main diagonal of the unit square. Now, the bipartite ranking task can be reformulated in a more quantitative manner: the objective pursued is to build a scoring function $s(z)$, based on the training examples $\{\bX_1,\;\ldots,\; \bX_n\} \text{ and } \{\bY_1,\;\ldots,\; \bY_m\}$, with a $\roc$ curve as close as possible to $\roc^*$. A typical way of measuring the deviation between the two curves is to consider the distance in $\sup$ norm:
\begin{equation}\label{eq:sup_dist}
d_{\infty}(s, s^*)=\sup_{\alpha\in (0,1)}\left\vert \roc(s,\alpha)-\roc^*(\alpha) \right\vert~.
\end{equation}
Attention should be paid that this quantity is a distance between $\roc$ curves (or between the related equivalence classes of scoring functions, the $\roc$ curve of any scoring function being invariant by strictly increasing transform) not between the scoring functions themselves. Since the curve $\roc^*$ is unknown in practice, the major difficulty lies in the fact that no straightforward statistical counterpart of the (functional) loss \eqref{eq:sup_dist} is available. In \cite{CV09ieee}
 (see also \cite{CV10CA}), it has been however shown that bipartite ranking can be viewed as a superposition of cost-sensitive classification problems and somehow 'discretized' in an adaptive manner, so as to apply empirical risk minimization with statistical guarantees in the $d_{\infty}$-sense, at the price of an additional bias term inherent to the approximation step.
 Alternatively, the performance of a candidate scoring rule $s$ can be measured by means of the $L_1$-norm in the $\roc$ space. Observing that, in this case, the loss can be decomposed as follows:
 \begin{equation}\label{eq:L1_norm}
 d_1(s,s^*)=\int_{0}^1\vert \roc(s,\alpha)-\roc^*(\alpha)  \vert d\alpha=\int_{0}^1\roc^*(\alpha)  d\alpha -\int_{0}^1 \roc(s,\alpha) d\alpha~,
 \end{equation}
 minimizing the $L_1$-distance to the optimal $\roc$ curve boils down to maximizing the area under the curve $\roc(s,\; .)$, that is to say
\begin{equation}\label{eq:auc_s}
\auc(s)\overset{def}{=}\auc_{H_s,G_s}=\mathbb{P}\{s(\bY)<s(\bX) \}+\frac{1}{2}\mathbb{P}\{s(\bY)=s(\bX) \}~,
\end{equation}
where $\bX$ and $\bY$ are random variables defined on the same probability space, independent, with respective distributions $G$ and $H$, denoting by $G_s$ and $H_s$ the distributions of $s(\bX)$ and $s(\bY)$ respectively.
The scalar performance criterion $\auc(s)$ defines a total preorder on $\mathcal{S}$ and its maximal value is denoted by $\auc^*=\auc(s^*)$, with $s^*\in \S^*$. Bipartite ranking through maximization of empirical versions of the $\auc$ criterion has been studied in several articles, including \cite{AGHHPR05} or \cite{CLV08}. Extension to \textit{multipartite ranking} (\textit{i.e.} when the number of samples/distributions under study is larger than $3$) is considered in \cite{CRV13}, see also \cite{CR15}. In contrast to \cite{CV09ieee} or \cite{CV10CA}, where the task of learning scoring rules with statistical guarantees in $\sup$ norm in the $\roc$ space is considered, the present article focuses on optimization of summary scalar empirical criteria generalizing the $\auc$ that takes the form of two-sample linear \textit{rank statistics}, as could be naturally expected when addressing ranking problems.
\subsection{Two-Sample Linear Rank Statistics}\label{subsec:2sample_rank_stats}
If the curve $\roc_{H,G}$ is the appropriate tool to examine to which extent a univariate distribution $G$ is stochastically larger than another one $H$, practical decisions are generally made on the basis of the observations of two univariate  independent random $\iid$ samples $\{X_1,\;\ldots,\; X_n\}$ and  $\{Y_1,\;\ldots,\; Y_m\}$, drawn from $G$ and $H$ respectively. Computing the empirical cumulative distribution functions 
$\widehat{H}_m(t)=(1/m)\sum_{j=1}^m\mathbb{I}\{Y_j\leq t\}$ and $\widehat{G}_n(t)=(1/n)\sum_{i=1}^n\mathbb{I}\{X_i\leq t\}$ for $t\in \mathbb{R}$, one can plot the empirical $\roc$ curve:
\begin{equation}\label{eq:emp_ROC}
\widehat{\roc}=\roc_{\widehat{H}_m, \; \widehat{G}_n}.
\end{equation}
 Observe that the $\roc$ curve \eqref{eq:emp_ROC} is an increasing broken line connecting $(0,0)$ to $(1,1)$ in the unit square $[0,1]^2$ and is fully determined by the set of ranks occupied by the positive instances within the pooled sample $\{ \Rank(X_i):\; i=1,\; \ldots,\; n\}$, where:

\begin{equation}\label{eq:rankdef}
\forall i\in\{1,\; \ldots,\; n  \}\quad \Rank(X_i)=N\widehat{F}_{N}(X_i)~,
\end{equation}

with $\widehat{F}_{N}(t)=(1/N)\sum_{i=1}^n\mathbb{I}\{X_i \leq t  \}+ (1/N)\sum_{j=1}^m\mathbb{I}\{Y_j \leq t  \}$ and $N=n+m$. Breakpoints of the piecewise linear curve \eqref{eq:emp_ROC} necessarily belong to the set of gridpoints $\left\{ \left(j/m,\; i/n  \right):\; j\in\{1,\; \ldots,\; m-1  \} \text{ and } i\in\{1,\; \ldots,\; n-1  \}  \right\}$.
Denote by $X_{(i)}$ the order statistics related to the sample $\{X_1,\;\ldots,\; X_n\}$, \textit{i.e.}
$
\Rank(X_{(n)})>\cdots >\Rank(X_{(1)})
$,
and by $Y_{(j)}$ those related to the sample $\{Y_1,\;\ldots,\; Y_m\}$. Consider the c\`ad-l\`ag step function:
\begin{equation}\label{eq:step}
\alpha\in [0,1]\mapsto \sum_{j=1}^m \widehat{\gamma}_j \cdot \mathbb{I}\{\alpha\in [(j-1)/m,\; j/m[\}~,
\end{equation}
where, for all $j\in\{1,\; \ldots,\; m\}$, we set:
\begin{multline*}
\widehat{\gamma}_j=\frac{1}{n}\sum_{i=1}^n\mathbb{I}\{  X_i>Y_{(m-j+1)} \}=\frac{1}{n}\sum_{i=1}^n\mathbb{I}\{  \Rank(X_{(n-i+1)})>\Rank(Y_{(m-j+1)}) \}\\
= \frac{1}{n}\sum_{i=1}^n\mathbb{I}\{j\geq N-\Rank(X_{(n-i+1)})-i+2  \}~.
\end{multline*}

The $\roc$ curve \eqref{eq:emp_ROC} is the continuous broken line that connects the jump points of the step curve \eqref{eq:step} and can thus be expressed as a function of the 'positive ranks' \ie the $\Rank(X_i)$'s only. As a consequence, any summary of the empirical $\roc$ curve, is a two-sample rank statistic, that is a measurable function of the 'positive ranks'. In particular, the empirical $\auc$, \ie the $\auc$ of the empirical $\roc$ curve \eqref{eq:emp_ROC}, also termed the rate of concording pairs or the \textit{Mann-Whitney statistic}, can be easily shown to coincide, up to an affine transform, with the sum of 'positive ranks', the well-known \textit{rank-sum Wilcoxon statistic} \cite{Wil45}:
\begin{equation}\label{eq:rank_sum}
\widehat{W}_{n,m}=\sum_{i=1}^n \Rank(X_i)~.
\end{equation}
Indeed, we have:
$$
\widehat{W}_{n,m}=nm\auc_{\widehat{H}_m, \widehat{G}_n}+\frac{n(n+1)}{2}~.
$$
However, two-sample rank statistics (\ie functions of the $\Rank(X_i)$'s) form a very rich collection of statistics and this is by no means the sole possible choice to summarize the empirical $\roc$ curve.

\begin{definition}{\sc (Two-sample linear rank statistics)}\label{def:R_stat} Let $\phi:[0,1]\rightarrow  \RR$ be a nondecreasing function. The two-sample linear rank statistics with 'score-generating function' $\phi(u)$ based on the random samples $\{X_1,\;\ldots,\; X_n\}$ and  $\{Y_1,\;\ldots,\; Y_m\}$ is given by:
\begin{equation}\label{eq:R_stat}
\widehat{W}^{\phi}_{n,m}=\sum_{i=1}^n\phi\left( \frac{\Rank(X_i)}{N+1} \right).
\end{equation}
\end{definition}
The statistics \eqref{eq:R_stat} defined above are all distribution-free when $H=G$ and are, for this reason, particularly useful to detect differences between the distributions $H$ and $G$ and widely used to perform homogeneity tests in the univariate setup. Tabulating their distribution under the null assumption, they can be used to design unbiased tests at certain levels $\alpha$ in $(0,1)$. The choice of the score-generating function $\phi$ can be guided by the type of difference between the two distributions (\textit{e.g.} in scale, in location) one possibly expects, and may then lead to locally most powerful testing procedures, capable of detecting 'small' deviations from the homogeneous situation. More generally, depending on the statistical test to perform, one may use particular function $\phi$, Figure \ref{fig:scoregenstatclass} shows classic score-generating functions broadly used for two-sample statistical tests (refer to \cite{Haj62}). 
One may refer to Chapter 9 in \cite{Ser80} or to Chapter 13 in \cite{vdV98} for an account of the (asymptotic) theory of rank statistics. 
In the present paper, two-sample linear rank statistics are used for a very different purpose, as empirical performance measures in bipartite ranking based on two independent multivariate samples $\{\bX_1,\;\ldots,\; \bX_n\}$ and  $\{\bY_1,\;\ldots,\; \bY_m\}$. The analysis of the bipartite ranking problem carried out in Section \ref{sec:application}, based on the concentration inequalities established in Section \ref{sec:Rproc}, shows the relevance of evaluating the ranking performance of a scoring rule candidate $s(z)$ by computing a two-sample linear rank statistic based on the univariate samples obtained after scoring $\{s(\bX_1),\;\ldots,\; s(\bX_n)\}$ and  $\{s(\bY_1),\;\ldots,\; s(\bY_m)\}$ and establishes statistical guarantees for the generalization capacity of scoring rules built by optimizing such an empirical criterion.

\begin{figure}[!h]
	\centering
	\includegraphics[width=6cm, height=6cm]{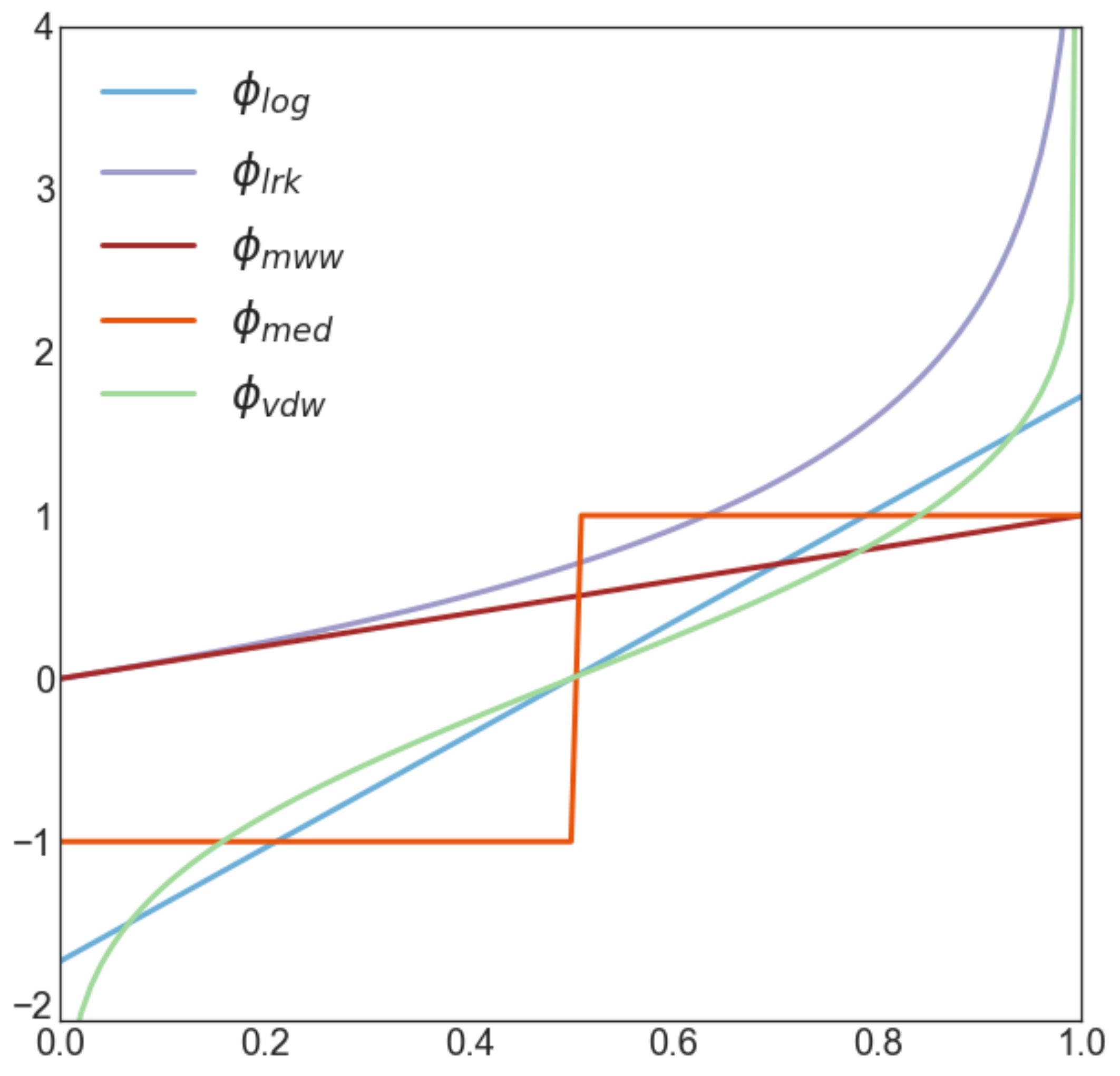}
	\caption{Curves of two-sample score-generating functions with the associated statistical test: Logistic test $\phi_{log}(u)= 2 \sqrt{3}(u - 1/2)$ in blue,  Logrank test $\phi_{lrk}(u)= - \log(1-x)$ in purple, Mann-Whitney-Wilcoxon test $\phi_{mww}(u)= u$ in red,  Median test  $\phi_{med}(u)= \sgn (u - 1/2)$  in orange,  Van der Waerden test $\phi_{vdw}(u)= \Phi^{-1}(u)$  in green, $\Phi$ being the normal quantile function.}
	\label{fig:scoregenstatclass}
\end{figure}

\subsection{Bipartite Ranking as Maximization of Two-Sample Rank Statistics} \label{sec:criteria}
As foreshadowed above, empirical performance measures in bipartite ranking should be unsurprisingly based on ranks. 
We propose here to evaluate empirically the ranking performance of any scoring function candidate $s(z)$ in $\mathcal{S}$ by means of statistics of the type: 
\begin{equation}\label{Wstat}
\widehat{W}^{\phi}_{n,m}(s) = \sum_{i=1}^n\phi \left(\frac{\Rank(s(\bX_i))}{N+1}\right),
\end{equation}
where $N=n+m$, $\phi : [0,1] \to \RR$ is some Borelian nondecreasing function. This quantity is a two-sample linear rank statistic (see Definition \ref{def:R_stat}) related to the score-generating function $\phi(u)$ and the samples $\{s(\bX_1),\;\ldots,\; s(\bX_n)\}$ and $\{s(\bY_1),\;\ldots,\; s(\bY_m)\}$. This statistic is invariant by increasing transform of the scoring function $s$, just like the (empirical) $\roc$ curve and, as recalled in the previous section, it is a natural and common choice to quantify differences in distribution between the univariate samples $\{s(\bX_1),\;\ldots,\; s(\bX_n)\}$ and $\{s(\bY_1),\;\ldots,\; s(\bY_m)\}$, to evaluate to which extent the distribution of the first sample is stochastically larger than that of the second sample in particular. It consequently appears as legitimate to learn a scoring function $s$ by maximizing the criterion \eqref{Wstat}. Whereas rigorous arguments are developed in Section \ref{sec:application}, we highlight here that,
for specific choices of the score-generating function $\phi$, many relevant criteria considered in the ranking literature can be accurately approximated by statistics of this form:
\begin{itemize}
	\item $\phi(u)=u$ - this choice leads to the celebrated Wilcoxon-Mann-Whitney statistic which is related to the empirical version of the $\auc$.
	\item $\phi(u)=u \mathbb{I}\{u\ge u_0\}$, for some $u_0\in (0, 1)$ - such a score-generating function corresponds to the local $\auc$ criterion, introduced recently in \cite{CV07}. Such a criterion is of interest when one wants to focus on the highest ranks.
	\item $\phi(u)=u^q$ - this is another choice which puts emphasis on high ranks but in a smoother way than the previous one. This is related to the $q$-norm push approach taken in \cite{Rud06}. However, we point out that the criterion studied in the latter work relies on a different definition of the rank of an observation.  Namely, the
	rank of positive instances among negative instances (and not in the pooled sample) is used. This choice permits to use independence which makes the technical part much simpler, at the price of increasing the variance of the criterion. 
	\item
	$\phi(u)=\phi_N(u)=c\left((N+1)u\right)  \mathbb{I}\{u\geq k/(N+1)\}$ - this corresponds to the $\dcg$ criterion in the bipartite setup (see \cite{CosZha06b}), one of the 'gold standard quality measure' in information retrieval, when \textit{grades} are binary. The $c(i)$'s denote the \textit{discount factors}, $c(i)$ measuring the importance of rank $i$. The integer $k$ denotes the number of top-ranked instances to take into account. Notice that, with our indexation, top positions correspond to the largest ranks and the sequence $\{c(i)\}_{i\leq N}$ should be chosen increasing. 
\end{itemize}

Depending on the choice of the score-generating function $\phi$, some specific patterns of the preorder induced by a scoring function $s(z)$ can be either enhanced by the criterion \eqref{Wstat} or else completely disappear: for instance, the value of \eqref{Wstat} is essentially determined by the possible presence of positive instances among top-ranked observations, when considering a score generating function $\phi$ that rapidly vanishes near $0$ and takes much higher values near $1$.
\medskip

 Investigating the performance of maximizers of the criterion \eqref{Wstat} from a nonasymptotic perspective is however far from straightforward, due to the complexity of the latter (\textit{i.e.} a sum of strongly dependent random variables).
It requires in particular to prove concentration inequalities for collections of two-sample linear rank statistics, indexed by classes of scoring functions of controlled complexity (\textit{i.e.} of {\sc VC}-type), referred to as two-sample rank processes throughout the article. It is the purpose of the next section to establish such results.

\section{Concentration Inequalities for Two-Sample Rank Processes}\label{sec:Rproc}

This section is devoted to prove concentration bounds for collections of two-sample linear rank statistics \eqref{Wstat}, indexed by classes $\mathcal{S}_0\subset \mathcal{S}$ of scoring functions.
In order to study the fluctuations of \eqref{Wstat} as the full sample size $N$ increases, it is of course required to control the fraction of 'positive'/'negative' observations in the pooled dataset. Let $p\in (0,1)$ be the 'theoretical' fraction of positive instances. For $N\geq 1/p$, we suppose that $n = \lfloor pN \rfloor$ and $m = \lceil (1-p)N \rceil = N - n$. Define the mixture probability distribution $F=pG+(1-p)H$. For any $s\in\mathcal{S}$, the distribution of $s(\bX)$ (\textit{i.e.} the image of $G$ by $s$) is denoted by $G_s$, that of $s(\bY)$ (\textit{i.e.} the image of $H$ by $s$) by $H_s$. We also denote by $F_s$ the image of distribution $F$ by $s$. For simplicity, the same notations are used to mean the related cumulative distribution functions. We also introduce their statistical versions $\widehat{G}_{s,n}(t)=(1/n)\sum_{i=1}^n\mathbb{I}\{s(\bX_i)\leq t  \}$ and $\widehat{H}_{s,m}(t)=(1/m)\sum_{j=1}^m\mathbb{I}\{s(\bY_j)\leq t  \}$ and define: 
\begin{equation}\label{eq:emp_cdf_raw}
\widehat{F}_{s,N}(t)=(n/N)\widehat{G}_{s,n}(t)+(m/N)\widehat{H}_{s,m}(t)~.
\end{equation}
Since $n/N\rightarrow p$ as $N$ tends to infinity, the quantity above is a natural estimator of the \cdf $F_s$.
Equipped with these notations, we can write:
\begin{equation}\label{eq:crit_emp}
\frac{1}{n}\widehat{W}^{\phi}_{n,m}(s)=\frac{1}{n}\sum_{i=1}^n\phi \left(\frac{N}{N+1} \widehat{F}_{s,N}(s(\bX_i))  \right).
\end{equation}
Hence, the statistic \eqref{eq:crit_emp} can be naturally seen as an empirical version of the quantity defined below, around which it fluctuates.
\begin{definition}
	For a given score-generating function $\phi$, the functional
	\begin{equation}\label{eq:Wtrue}
	W_{\phi}(s) = \mathbb{E} [(\phi \circ F_s)(s(\bX))]~,
	\end{equation}
	is referred to as the "$W_{\phi}$-ranking performance measure".
\end{definition}

Indeed, replacing $\widehat{F}_{s,N}(s(\bX_i))$ in \eqref{eq:crit_emp} by $F_s(s(\bX_i))$ and taking next the expectation permits to recover $\eqref{eq:Wtrue}$. Observe in addition that, for $\phi(u)=u$, the quantity \eqref{eq:Wtrue} is equal to $\auc(s)$ \eqref{eq:auc_s} a soon as the distribution $F_s$ is continuous. The next lemma reveals that the criterion \eqref{eq:Wtrue} can be viewed as a scalar summary of the $\roc$ curve.
\begin{lemma}
Let $\phi$ be a score-generating function. We have, for all $s$ in $\S$, 
		\begin{equation}\label{eq:W_roc}
		W_{\phi}(s) =\frac{1}{p}\int_{0}^1\phi(u)du -\frac{1-p}{p}\int_{0}^1 \phi\left(p(1-\roc(s, \alpha))+(1-p)(1-\alpha)\right)~d\alpha~.
		\end{equation}
\end{lemma}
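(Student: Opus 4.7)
The plan is to rewrite $W_\phi(s) = \mathbb{E}[\phi(F_s(s(\bX)))]$ as a Stieltjes integral, exploit the mixture identity $F_s = pG_s + (1-p)H_s$ to eliminate the integral against $G_s$, and finish with a change of variable to the ROC parameter $\alpha$.

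\textbf{Step 1: integral representation.} Since $s(\bX)\sim G_s$, the definition gives
\begin{equation*}
W_\phi(s) \;=\; \int \phi(F_s(t))\,dG_s(t).
\end{equation*}

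\textbf{Step 2: mixture identity.} Because $F = pG + (1-p)H$, the images satisfy $F_s = pG_s + (1-p)H_s$. Under the ``no plateau'' convention adopted in Section~\ref{subsec:biproc} (i.e. $F_s$ is continuous), if $\bZ \sim F$ then $F_s(s(\bZ))$ is uniformly distributed on $[0,1]$, so that
\begin{equation*}
\int_{0}^{1}\phi(u)\,du \;=\; \int \phi(F_s(t))\,dF_s(t) \;=\; p\int \phi(F_s(t))\,dG_s(t) \;+\; (1-p)\int \phi(F_s(t))\,dH_s(t).
\end{equation*}
Isolating $W_\phi(s)$ yields
\begin{equation*}
W_\phi(s) \;=\; \frac{1}{p}\int_{0}^{1}\phi(u)\,du \;-\; \frac{1-p}{p}\int \phi(F_s(t))\,dH_s(t).
\end{equation*}

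\textbf{Step 3: change of variable to $\alpha$.} In the remaining integral I would set $\alpha = 1 - H_s(t)$. By definition of the ROC curve (again using the no-plateau assumption), $G_s(t) = 1 - \roc(s,\,1-H_s(t))$, so that $F_s(t) = p(1-\roc(s,\alpha)) + (1-p)(1-\alpha)$. Since $dH_s(t) = -d\alpha$ and $\alpha$ decreases from $1$ to $0$ as $t$ sweeps $\mathbb{R}$,
\begin{equation*}
\int \phi(F_s(t))\,dH_s(t) \;=\; \int_{0}^{1}\phi\!\left(p(1-\roc(s,\alpha)) + (1-p)(1-\alpha)\right)d\alpha,
\end{equation*}
and substitution into the previous display gives the claimed identity.

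\textbf{Main obstacle.} The computation itself is a three-line change of variable; the only delicate point is justifying that $F_s(s(\bZ))$ is uniform, which fails whenever $G_s$ or $H_s$ possesses atoms. In that case the ROC curve is defined via linear interpolation at jumps and the identity must be recovered either by treating the atomic masses explicitly in the Stieltjes sums or by a standard randomization argument. Under the continuity convention invoked throughout the earlier ROC discussion, however, the argument reduces to the clean three-step calculation above.
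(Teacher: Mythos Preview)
Your proof is correct and follows essentially the same route as the paper: decompose $F_s=pG_s+(1-p)H_s$ to obtain $pW_\phi(s)=\int_0^1\phi(u)\,du-(1-p)\,\mathbb{E}[(\phi\circ F_s)(s(\bY))]$, then change variables via $\alpha=1-H_s(t)$ to rewrite the remaining expectation as the ROC integral. Your write-up is in fact more explicit than the paper's (which compresses each step to a single line), and your remark on atoms correctly identifies the only point where care is needed.
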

\begin{proof}
Using the decomposition $F_s = pG_{s}+(1-p)H_{s}$, we are led to the following expression:
\[
p W_{\phi}(s)  = \int_{0}^1\phi(u)~du-(1-p)\mathbb{E}[(\phi \circ F_s)(s(\bY))]~.
\]
Then, using a change of variable, we get:
\[
\mathbb{E}[(\phi \circ F_s)(s(\bY))]
= \int_{0}^1 \phi(p(1-\roc(s, \alpha))+(1-p)(1-\alpha))~d\alpha~.
\]

\end{proof}
As revealed by Eq. \eqref{eq:W_roc}, a score-generating function $\phi$ that takes much higher values near $1$ than near $0$ defines a criterion \eqref{eq:Wtrue} that mainly summarizes the behavior of the $\roc$ curve near the origin, \textit{i.e.} the preorder on the set of instances with highest scores. 
\medskip

Below, we investigate the concentration properties of the process:
\begin{equation}\label{eq:W_process}
\left\{ \frac{1}{n}\widehat{W}^{\phi}_{n,m}(s) - W_{\phi}(s)   \right\}_{s\in \mathcal{S}_0}.
\end{equation}
As a first go, we prove, by means of linearization techniques,
that two-sample linear rank statistics can be uniformly approximated by
much simpler quantities, involving $\iid$ averages and two-sample $U$-statistics. This will be key to establish probability bounds for the maximal deviation:

\begin{equation}\label{eq:supremum}
\sup_{s\in \mathcal{S}_0}\left\vert  \frac{1}{n}\widehat{W}^{\phi}_{n,m}(s)-W_{\phi}(s)  \right\vert,
\end{equation}

  under adequate complexity assumptions for the class $\mathcal{S}_0$ of scoring functions considered and to study next the generalization ability of maximizers of the empirical criterion \eqref{eq:crit_emp} in terms of $W_{\phi}$-ranking performance. Throughout the article, all the suprema considered, such as \eqref{eq:supremum}, are assumed to be measurable and we refer to Chapter 2.3 in \cite{vdVWell96} for more details on the formulation in terms of outer measure/expectation that guarantees measurability.
\medskip

{\bf Uniform approximation of two-sample linear rank statistics.} Whereas statistical guarantees for Empirical Risk Minimization in the context of classification or regression can be directly obtained by means of classic concentration results for empirical processes (\textit{i.e.} averages of $\iid$ random variables), the study of the fluctuations of the process \eqref{eq:W_process} is far from straightforward, insofar as the terms averaged in \eqref{eq:crit_emp} are not independent.
For averages of non-$\iid$ random variables, the underlying statistical structure can be revealed by orthogonal projections onto the space of sums of $\iid$ random variables in many situations. This projection argument was the key for the study of empirical $\auc$ maximization or that of within cluster point scatter, which involved $U$-processes, see \cite{CLV08} and \cite{CLEM14}. In the case of $U$-statistics, this orthogonal decomposition is known as the \textit{Hoeffding decomposition} and the remainder may be expressed as a degenerate $U$-statistic, see \cite{Hoeffding48}. For rank statistics, a similar though more complex decomposition can be considered. We refer to \cite{Haj68} for a systematic use of the \textit{projection method} for investigating the asymptotic properties of general statistics.
From the perspective of ERM in statistical learning theory, through the \textit{projection method}, well-known concentration results for standard empirical processes and $U$-processes may carry over to more complex collections of random variables such as \textit{two-sample linear rank processes}, as revealed by the approximation result stated below. It holds true under the following technical assumptions.


\begin{hyp}\label{hyp:sabscont}
Let $M>0$.	For all $s\in \S_0$, the random variables $s(\bX)$ and $s(\bY)$ are continuous, with density functions that are twice differentiable and have Sobolev $\mathcal{W}^{2,\infty}$-norms\footnote{Recall that the Sobolev space $\mathcal{W}^{2,\infty}$ is the space of all Borelian functions $h:\mathbb{R}\rightarrow \mathbb{R}$ such that $h$ and its first and second order weak derivatives $h'$ and $h''$ are bounded almost-everywhere. Denoting by $\vert\vert.\vert\vert_{\infty}$ the norm of the Lebesgue space $L_{\infty}$ of Borelian and essentially bounded functions, $\mathcal{W}^{2,\infty}$ is a Banach space when equipped with the norm $\vert\vert h\vert\vert_{2,\infty}=\max\{\vert\vert h\vert\vert_{\infty},\; \vert\vert h'\vert\vert_{\infty},\; \vert\vert h''\vert\vert_{\infty}   \}$.} bounded by $M<+\infty$.
\end{hyp}


\begin{hyp}\label{hyp:phic2}
	The score-generating function $\phi : [0,1] \mapsto \RR$, is nondecreasing and twice continuously differentiable.
\end{hyp}

\begin{hyp}\label{hyp:VC}
	The class of scoring functions $\S_0$ is a {\sc VC} class of finite {\sc VC} dimension $\V<+\infty$.
\end{hyp}
 
For the definition of  {\sc VC} classes of functions, one may refer to \textit{e.g.} \cite{vdVWell96}, see section 2.6.2 therein, and also recalled in Appendix section \ref{appsubsec:vcperm}.
By means of the proposition below, the study of the fluctuations of the two-sample linear rank process \eqref{eq:W_process} boils down to that of basic empirical processes.
 
\begin{proposition}\label{prop:hajek} Suppose that Assumptions \ref{hyp:sabscont}-\ref{hyp:VC} are fulfilled. The two-sample linear rank process \eqref{eq:W_process} can be linearized/decomposed as follows. For all $ s\in \S_0$,

\begin{equation}\label{eq:linear}
\widehat{W}_{n,m}^{\phi}(s) = n \widehat{W}_{\phi}(s) + \left(\widehat{V}_{n}^X(s) -\mathbb{E}\left[ \widehat{V}_{n}^X(s) \right]\right)+\left( \widehat{V}_{m}^Y(s) -\mathbb{E}\left[  \widehat{V}_{m}^Y(s) \right]\right)+ \mathcal{R}_{n,m}(s)~,
\end{equation}
\noindent where
\begin{eqnarray*}
\widehat{W}_{\phi}(s)&=& \frac{1}{n} \sum_{i=1}^{n} \left(\phi \circ \bF_s\right)(s(\bX_i))~, \\
	\widehat{V}_{n}^X(s) &=& \frac{n-1}{N+1} \sum_{i=1}^n \int_{s(\bX_i)}^{+\infty} (\phi' \circ \bF_s)(u) dG_{s}(u)~,\\
\widehat{V}_{m}^Y(s) &=& \frac{n}{N+1} \sum_{j=1}^m \int_{s(\bY_j)}^{+\infty}( \phi' \circ \bF_s)(u) dG_{s}(u)~.
\end{eqnarray*}
	



For any $\delta \in (0,1)$, there exist constants $c_1,  \; c_3 >0, \; c_2 \geq 1, \; c_4>6, \;c_5>3 $, depending on $\phi$ and $\V$, such that 

\begin{equation}\label{eq:remdevboundprophajek}
	\mathbb{P}\left\{	\sup_{s \in \S_0} \vert \mathcal{R}_{n,m}(s)  \vert < t \right\} \geq 1 - \delta~,
\end{equation}

\noindent where $t = c_1 + c_2\log(c_4/\delta) $ as soon as $N\geq (c_3/p)\log(c_5/\delta)$.
 
\end{proposition}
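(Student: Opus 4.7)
The plan is to combine a second-order Taylor expansion of $\phi$ around $F_s(s(\bX_i))$ with a Hoeffding-type projection of the resulting $V$- and two-sample $U$-statistics, and then to control the residual process uniformly over $\mathcal{S}_0$ via exponential inequalities adapted to \textsc{VC} classes. Concretely, using Assumption~\ref{hyp:phic2}, I would write for each $i$
\[
\phi\!\left(\tfrac{N}{N+1}\widehat{F}_{s,N}(s(\bX_i))\right) = \phi(F_s(s(\bX_i))) + \phi'(F_s(s(\bX_i)))\,\Delta_i(s) + r_i(s),
\]
with $\Delta_i(s) = \tfrac{N}{N+1}\widehat{F}_{s,N}(s(\bX_i)) - F_s(s(\bX_i))$ and $|r_i(s)| \le \tfrac12\|\phi''\|_\infty \Delta_i(s)^2$. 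Summing the zero-th order contribution over $i$ reproduces $n\widehat{W}_\phi(s)$ exactly.

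Next I would decompose $\widehat{F}_{s,N} = (n/N)\widehat{G}_{s,n} + (m/N)\widehat{H}_{s,m}$ inside the linear term. The $(\bX,\bX)$ piece becomes a degree-$2$ $V$-statistic with kernel $(x,x')\mapsto \phi'(F_s(s(x)))\mathbb{I}\{s(x')\le s(x)\}$: its diagonal is $O(n/N)$ and its off-diagonal part is an $i\neq k$ $U$-statistic whose H\'ajek projection is exactly $\widehat{V}_n^X(s)$, up to centering and a coefficient mismatch of order $1/N$. The $(\bX,\bY)$ piece is a two-sample $U$-statistic whose H\'ajek projection onto the $\bY$-sample reproduces $\widehat{V}_m^Y(s)$ via the identity
\[
\mathbb{E}_X\!\left[\phi'(F_s(s(X)))\mathbb{I}\{s(\bY_j)\le s(X)\}\right] = \int_{s(\bY_j)}^{+\infty}\phi'(F_s(u))\,dG_s(u).
\]
The spurious deterministic leftovers, essentially $\tfrac{1}{N+1}\sum_i \phi'(F_s(s(\bX_i)))G_s(s(\bX_i))$ and the like, either cancel against $\mathbb{E}[\widehat{V}_n^X(s)]$ and $\mathbb{E}[\widehat{V}_m^Y(s)]$ or are absorbed into $\mathcal{R}_{n,m}(s)$ as bounded $O(1)$ bias.

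What is left in $\mathcal{R}_{n,m}(s)$ is then a sum of three stochastic pieces: the quadratic Taylor remainder $\sum_i r_i(s)$, the canonical (degenerate) part of the $(\bX,\bX)$ $U$-statistic, and the canonical part of the two-sample $(\bX,\bY)$ $U$-statistic. For the Taylor remainder I would apply the classical \textsc{VC} inequality to the class $\{(z,t)\mapsto \mathbb{I}\{s(z)\le t\}: s\in \mathcal{S}_0\}$ (of \textsc{VC}-dimension polynomial in $\mathcal{V}$) to get $\sup_s\|\widehat{F}_{s,N}-F_s\|_\infty = O(\sqrt{(\mathcal{V}+\log(1/\delta))/N})$ with probability $1-\delta/3$, which yields $n\sup_{i,s}\Delta_i(s)^2 = O(\log(1/\delta))$ since $n=\lfloor pN\rfloor$. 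For the two degenerate $U$-process residuals I would invoke exponential inequalities for bounded canonical one- and two-sample $U$-processes indexed by \textsc{VC} classes (in the line of de la Pe\~na--Gin\'e, Arcones--Gin\'e, and the two-sample variant already exploited in \cite{CLV08}), each delivering a deviation of order $\log(c/\delta)$ after the $1/(N+1)$ normalization. A union bound combines the three deviation events, and the hypothesis $N\ge (c_3/p)\log(c_5/\delta)$ ensures that $n=\lfloor pN\rfloor$ is large enough for those concentration bounds to apply and for the $O(1/N)$ deterministic corrections to stay bounded.

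The hardest step will be the canonical two-sample $U$-process piece: obtaining a sharp $\log(1/\delta)$ tail for its supremum over $\mathcal{S}_0$ while avoiding an extra $\sqrt{\log(1/\delta)}$ factor. Boundedness of the envelope follows from Assumptions~\ref{hyp:sabscont}--\ref{hyp:phic2}, which give uniform bounds on $\phi'\circ F_s$, on its derivative, and on the variance of the canonical projection via the $\mathcal{W}^{2,\infty}$ bound $M$; it is precisely at this stage that a decoupling/chaining argument pins down the constants $c_1,\ldots,c_5$ as explicit functions of $\mathcal{V}$, $M$, and the Sobolev norms of $\phi$.
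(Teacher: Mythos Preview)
Your proposal is correct and mirrors the paper's own proof almost exactly: second-order Taylor expansion of $\phi$, Hoeffding decomposition of the resulting one- and two-sample $U$-statistics, control of the quadratic Taylor remainder via the \textsc{VC} bound on $\sup_{s,t}|\widehat{F}_{s,N}(t)-F_s(t)|$, and exponential tail bounds for the degenerate $U$-processes (the paper proves the two-sample version you flag as hardest as a separate lemma by decoupling and chaining). One minor terminological slip: the ``leftovers'' you call deterministic---terms like $\sum_i\phi'(F_s(s(\bX_i)))G_s(s(\bX_i))$ together with the analogous $H_s$ term---are actually random, and the paper makes the cancellation explicit by noting that $\tfrac{n-1}{N+1}G_s+\tfrac{m}{N+1}H_s-F_s$ is $O(1/N)$ uniformly, so that these $X$-projections combined with the $-F_s(s(\bX_i))$ piece of $\Delta_i(s)$ leave only an $O(1/N)$-scaled empirical process in the residual.
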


The proof of this linearization result is detailed in the Appendix section \ref{pf:hajek} (refer to it for a description of the constants involved in the bound stated above). Its main argument consists in decomposing \eqref{eq:crit_emp} by means of a Taylor expansion at order two of the score generating function $\phi(u)$ and applying next the H\'ajek orthogonal projection technique (recalled at length in the 
Introduction Lemma \ref{appsubsec:hajmeth} for completeness) to the component corresponding to the first order term. The quantity  $\mathcal{R}_{n,m}(s)$ is then formed by bringing together the remainder of the H\'ajek projection and the component corresponding to the second order term of the Taylor expansion, while the probabilistic control of its order of magnitude is established by means of concentration results for (degenerate) one/two-sample $U$-processes (see the Appendix section \ref{subsec:ineq_Yproc} for more details).
It follows from decomposition \eqref{eq:linear} combined with triangular inequality that:
\begin{multline}\label{eq:max_dev}
 \sup_{s\in \S_0} \left\vert \frac{1}{n} \widehat{W}_{n,m}^{\phi}(s)-W_{\phi}(s) \right\vert \leq  \sup_{s\in \S_0} \left\vert \widehat{W}_{\phi}(s)-W_{\phi}(s) \right\vert \\
 +\sup_{s\in \S_0}\frac 1 n \left\vert \widehat{V}_{n}^X(s) -\mathbb{E}\left[ \widehat{V}_{n}^X(s) \right] \right\vert 
 + \sup_{s\in \S_0} \frac 1 n\left\vert \widehat{V}_{m}^Y(s) -\mathbb{E}\left[ \widehat{V}_{m}^Y(s) \right] \right\vert \\
 +\sup_{s\in \S_0}\frac 1 n \left\vert \mathcal{R}_{n,m}(s) \right\vert.
\end{multline}
Hence, nonasymptotic bounds for the maximal deviation of the process \eqref{eq:W_process} can be deduced from concentration inequalities for standard empirical processes, as shall be seen below. 
Before this, a few comments are in order.

\begin{remark} \label{rk:comp} {\sc (On the complexity assumption)} We point out that alternative complexity measures could be naturally considered, such as those based on Rademacher averages, see \textit{e.g.} \cite{Kolt06}. 
However, as different types of stochastic process (\textit{i.e.} empirical process, degenerate one-sample $U$-process and degenerate two-sample $U$-process) are involved in the present nonasymptotic study, different types of Rademacher complexities (see \textit{e.g.} \cite{CLV08}) should be introduced to control their fluctuations as well. For the sake of simplicity, the concept of {\sc VC}-type class of functions is used here.
\end{remark}

\begin{remark}\label{rk:nonsmooth} {\sc (Smooth score-generating functions)} The subsequent analysis is restricted to the case of smooth score-generating functions for simplification purposes. We nevertheless point out that, although one may always build smooth approximants of irregular score generating functions, the theoretical results established below can be directly extended to non-smooth situations, at the price of a significantly greater technical complexity.
\end{remark}

The theorem below provides a concentration bound for the two-sample rank process \eqref{eq:W_process}. The proof is based on the uniform approximation result precedingly established, refer to the Appendix section \ref{pf:bound_twosample} for technical details.

\begin{theorem}\label{thm:bound_twosample}
Suppose that the assumptions of Proposition \ref{prop:hajek} are fulfilled. Then, there exist constants $C_1, \; C_2 \geq 24$, depending on $\phi, \; \V$,  such that for all $C_4\geq C_1$ depending on $\phi$, and $t$:  



\begin{equation}
\mathbb{P}\left\{ \sup_{s \in \S_0} \bigg| \frac{1}{n}  \widehat{W}^{\phi}_{n,m}(s)- W_{\phi}(s)  \bigg| > t \right\}
\leq C_2e^{-pC_3Nt^2} ~,
\end{equation}

\noindent as soon as  $C_1 /  \sqrt{pN}  \leq t \leq  C_4 \min(p,1-p)$, where $C_3 = \log(1+C_{4}/(4C_{2}))/(C_2C_4)$ depends on $\phi, \; \V$. 


\end{theorem}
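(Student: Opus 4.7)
The plan is to start from the triangle-inequality decomposition \eqref{eq:max_dev} and control each of its four suprema separately, then combine the four estimates by a union bound. The fourth term, involving the remainder $\mathcal{R}_{n,m}(s)$, is already handled by Proposition \ref{prop:hajek}: on an event of probability at least $1 - \delta$, $\sup_{s \in \S_0}|\mathcal{R}_{n,m}(s)|$ is bounded by $c_1 + c_2 \log(c_4/\delta)$ as soon as $N \geq (c_3/p)\log(c_5/\delta)$; after division by $n = \lfloor pN \rfloor$ this gives a contribution of order $\log(1/\delta)/(pN)$ to the overall deviation.

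The first three suprema are classical empirical processes indexed by $\S_0$. The leading piece $\widehat{W}_{\phi}(s) - W_{\phi}(s)$ is a centered $\bX$-empirical average of the bounded functions $x \mapsto (\phi \circ F_s)(s(x))$, uniformly bounded by $\|\phi\|_\infty$. The piece $\widehat{V}_{n}^X(s)/n$ equals $(n-1)/(N+1)$ times the $\bX$-empirical average of $x \mapsto \int_{s(x)}^{+\infty}(\phi' \circ F_s)(u)\, dG_{s}(u)$, uniformly bounded by $\|\phi'\|_\infty$, and similarly $\widehat{V}_{m}^Y(s)/n$ is $m/(N+1)$ times a centered $\bY$-empirical average of a function bounded by $\|\phi'\|_\infty$. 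In each case the composition with $\phi$, $F_s$ and the integration against $dG_s$ preserves the VC-subgraph property (the inner transforms are monotone, so one may invoke Assumption \ref{hyp:VC} together with the standard permanence results recalled in the Appendix), so that each of the three linearized function classes has VC-type entropy controlled by a quantity depending only on $\V$.

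With this in hand, standard uniform concentration for bounded empirical processes indexed by a VC class, obtained via symmetrization, chaining and a Bernstein-type bound, yields for each of the three pieces a tail estimate of the form
\begin{equation*}
\mathbb{P}\Bigl\{\sup_{s \in \S_0}\bigl|\widehat{W}_{\phi}(s) - W_{\phi}(s)\bigr| > t\Bigr\} \leq A_1 \, e^{-A_2 n t^2}
\end{equation*}
valid for $t \geq A_3/\sqrt{n}$, with analogous bounds for the two $\widehat{V}$-terms scaling as $\exp(-A_2 m t^2)$; the constants $A_1, A_2, A_3$ depend only on $\phi$ and $\V$. Since $n \asymp pN$ and $m \asymp (1 - p)N$, all three can be written uniformly as $\exp(-c \min(p,1-p) N t^2)$, which produces the exponent $p C_3 N t^2$ in the theorem after absorbing the $\min(p,1-p)$ factor into the constant $C_4$.

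A final union bound, allocating a share $t/4$ of the deviation to each of the four pieces, yields the claimed inequality. The lower threshold $t \geq C_1/\sqrt{pN}$ comes from the $1/\sqrt{n}$ calibration needed for the empirical-process tails to be effective and also absorbs the residual contribution $\log(1/\delta)/(pN)$ from $\mathcal{R}_{n,m}(s)$, while the upper constraint $t \leq C_4 \min(p,1-p)$ keeps one in the sub-Gaussian regime of the empirical-process tails and guarantees that the large-$N$ assumption of Proposition \ref{prop:hajek} is implied by the chosen failure level. The main obstacle is verifying that VC-type entropy control is preserved uniformly in $s$ across the three linearized function classes, whose defining transforms $F_s, G_s$ and $\phi \circ F_s$ themselves depend on $s$; once this permanence step is in place, the remainder is essentially bookkeeping to propagate $\|\phi\|_\infty, \|\phi'\|_\infty$ and $\V$ through standard concentration estimates and keep the dependence of $C_1, C_2, C_3, C_4$ on $\phi$ and $\V$ explicit.
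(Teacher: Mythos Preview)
Your proposal is correct and follows essentially the same approach as the paper's proof: both start from the decomposition \eqref{eq:max_dev}, control the three linearized empirical processes via VC-type concentration (the paper invokes Theorem~2.1 of Gin\'e--Guillou and Lemma~\ref{lem:permkerneldeg} for the permanence step you single out as the main obstacle), use Proposition~\ref{prop:hajek} for the remainder, and conclude with a union bound by splitting $t$ into four parts. The only cosmetic difference is that the paper writes the $\widehat V$-terms directly as $(1/N)\sum k_{s,1,1}(\bX_i)$ and $(1/N)\sum \ell_{s,1,1}(\bY_j)$ rather than in integral form, but this is the same object.
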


The concentration inequalities stated above are extensively used in the next section to study the ranking bipartite learning problem, when formulated as $W_{\phi}$-ranking performance maximization.

\section{Performance of Maximizers of Two-Sample Rank Statistics in Bipartite Ranking}\label{sec:application}
This section provides a theoretical analysis of bipartite ranking methods, based on maximization of the empirical ranking performance measure \eqref{Wstat}. While the concentration inequalities established in the previous section are the key technical tools to derive nonasymptotic bounds for the deficit of $W_{\phi}$-ranking performance measure of empirical maximizers, we start by showing that the criterion \eqref{eq:Wtrue} is relevant to measure ranking performance, whatever the score generating function $\phi$ is chosen, beyond the examples listed in Subsection \ref{sec:criteria}. 
\medskip

\noindent {\bf Optimal elements.} The next result states that optimal scoring functions do maximize the $W_{\phi}$-ranking performance and form a collection that coincides with the set $\mathcal{S}^*_{\phi}$ of maximizers of \eqref{eq:Wtrue}, provided that the score-generating function $\phi$ is strictly increasing on $(0,1)$.

\begin{proposition} \label{prop:opt} Let $\phi$ be a score-generating function. The assertions below hold true.

\begin{itemize}
\item[(i)] For all $(s,\; s^*)\in\S\times \S^*$, we have $W_{\phi}(s) \le  W_{\phi}(s^*)=W_{\phi}^*$,
where $W_{\phi}^* \overset{def}{=} W_{\phi}(\Psi)$.
\item[(ii)] Assuming in addition that the score-generating function $\phi$ is strictly increasing on $(0,1)$, we have: $\mathcal{S}^*_{\phi}=\S^*$.
\end{itemize}
\end{proposition}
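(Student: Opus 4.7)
My approach is to exploit the representation \eqref{eq:W_roc} established in the preceding Lemma, which expresses $W_{\phi}(s)$ as an affine functional of a single scalar summary of the curve $\mathrm{ROC}(s,\cdot)$. This reduces the question about the $W_\phi$-criterion to a question about pointwise comparisons of ROC curves, for which the Neyman--Pearson machinery from \cite{CV09ieee} applies directly.

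\textbf{Part (i).} Rewrite \eqref{eq:W_roc} as
\begin{equation*}
W_{\phi}(s) = \frac{1}{p}\int_{0}^1\phi(u)\,du - \frac{1-p}{p}\int_{0}^1 \phi\bigl(p(1-\mathrm{ROC}(s,\alpha))+(1-p)(1-\alpha)\bigr)\,d\alpha.
\end{equation*}
Since $\phi$ is nondecreasing and the map $u \mapsto 1-u$ is decreasing, the integrand of the second integral is a nonincreasing function of $\mathrm{ROC}(s,\alpha)$. By the Neyman--Pearson argument recalled after \eqref{eq:scoresetopt} (\cite{CV09ieee}, Proposition~2), the curve $\mathrm{ROC}^* = \mathrm{ROC}(\Psi,\cdot)$ dominates every ROC curve pointwise: $\mathrm{ROC}(s,\alpha)\leq \mathrm{ROC}^*(\alpha)$ for all $\alpha\in[0,1]$ and all $s\in\S$, with equality everywhere whenever $s\in\S^*$. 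The result $W_\phi(s)\leq W_\phi(s^*) = W_\phi^*$ then follows by integrating the pointwise comparison, and the value $W_\phi^*$ does not depend on the particular optimal element chosen because all $s^*\in\S^*$ share the same ROC curve.

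\textbf{Part (ii).} Take $s\in\S^*_\phi$, so that $W_\phi(s)=W_\phi^*$. Combining this equality with the representation above gives
\begin{equation*}
\int_{0}^1 \Bigl[\phi\bigl(p(1-\mathrm{ROC}(s,\alpha))+(1-p)(1-\alpha)\bigr) - \phi\bigl(p(1-\mathrm{ROC}^*(\alpha))+(1-p)(1-\alpha)\bigr)\Bigr]\,d\alpha = 0.
\end{equation*}
The integrand is nonnegative by part (i), hence it vanishes for Lebesgue-almost every $\alpha\in(0,1)$. Since the two arguments of $\phi$ above differ by exactly $p\,(\mathrm{ROC}^*(\alpha)-\mathrm{ROC}(s,\alpha))\geq 0$, strict monotonicity of $\phi$ on $(0,1)$ forces $\mathrm{ROC}(s,\alpha)=\mathrm{ROC}^*(\alpha)$ for almost every $\alpha$. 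Both curves are c\`ad-l\`ag, so equality a.e.\ extends to every $\alpha\in[0,1]$.

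\textbf{Conclusion of (ii) and main obstacle.} The remaining step is to deduce $s\in\S^*$ from the identity $\mathrm{ROC}(s,\cdot)=\mathrm{ROC}^*(\cdot)$. This is precisely the characterization of optimal scoring rules recalled in Subsection~\ref{subsec:biproc}, following from Proposition~2 in \cite{CV09ieee}: a scoring function achieves the optimal ROC curve if and only if it belongs to the equivalence class of increasing transforms of $\Psi$, i.e.\ to $\S^*$ as defined in \eqref{eq:scoresetopt}. Together with the trivial inclusion $\S^*\subseteq \S^*_\phi$ given by part (i), this yields $\S^*_\phi=\S^*$. I expect the only subtle step to be this last one: the passage from ROC-equality to membership in $\S^*$ relies on the Neyman--Pearson characterization rather than on the representation \eqref{eq:W_roc}, so the role of the strict monotonicity assumption is solely to upgrade ``equal $W_\phi$-value'' to ``equal ROC curve''.
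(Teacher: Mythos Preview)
Your proof is correct and follows exactly the approach indicated in the paper: the authors simply write that the result ``immediately results from \eqref{eq:W_roc} combined with the fact that the $\roc$ curve of increasing transforms of the likelihood ratio $\Psi(z)$ dominates everywhere any other $\roc$ curve'' and leave the details to the reader. You have supplied precisely those details, including the passage from a.e.\ equality of the ROC curves to everywhere equality via right-continuity, and the invocation of the Neyman--Pearson characterization from \cite{CV09ieee} to conclude membership in $\S^*$.
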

The proof immediately results from \eqref{eq:W_roc} combined with the fact that the $\roc$ curve of increasing transforms of the likelihood ratio $\Psi(z)$ dominates everywhere any other $\roc$ curve, as recalled in Section \ref{subsec:biproc}:  $\forall (s,\; s^*)\in\S\times \S^*$, $\forall \alpha\in (0,\; 1)$, $\roc(s,\alpha)\leq \roc(s^*,\alpha)=\roc^*(\alpha)$. Details are left to the reader.

\begin{remark} \label{rk:smooth}{\sc (On plug-in ranking rules)} Theoretically, a possible approach to bipartite ranking  is the {\em plug-in} method (\cite{DGL96}), which consists of using an estimate $\hat{\Psi}$ of the
likelihood function as a scoring function. As shown by the subsequent bound, when $\phi$ is differentiable with a bounded derivative,
when $\hat{\Psi}$ is close to $\Psi$ in the $L_1$-sense, it leads to a nearly optimal ordering in terms of W-ranking criterion:
$$
W_{\phi}^*-W_{\phi}\left(\widehat{\Psi}\right)\leq (1-p)\vert\vert \phi'\vert\vert_{\infty}\EXP[\vert \widehat{\Psi}(\bX)-\Psi(\bX)\vert]~.
$$
However, the bound above may be loose and the plug-in approach faces computational difficulties when dealing with high-dimensional data, see \cite{GKKW02}, which provide the motivation for
exploring algorithms based on $W_{\phi}$-ranking performance maximization.
\end{remark}

\begin{remark}{\sc (Alternative probabilistic framework)} We point out that the present analysis can be extended to the alternative setup, where, rather than assuming that two samples of sizes $n$ and $m$, 'positive' and 'negative', are available for the learning tasks considered in this paper, the $\iid$ observations $Z$ are supposed to come with a random label $Y$ either equal to $+1$ or else to $-1$, indicating whether $Z$ is distributed according to $G$ or $H$. If $p$ denotes the probability that the label $Y$ is equal to $1$, the number $n$ of positive observations among a training sample of size $N$ is then random, distributed as a binomial of size $N$ with parameter $p$. 
\end{remark}

Consider any maximizer of the empirical $W_{\phi}$-ranking performance measure over a class $\mathcal{S}_0\subset \mathcal{S}$ of scoring rules:
\begin{equation}
\widehat{s}\in \argmax_{s\in \mathcal{S}_0}\widehat{W}^{\phi}_{n,m}(s)~.
\end{equation}
Since we obviously have:
\begin{equation}\label{eq:perfineq}
W^*_{\phi}-W_{\phi}(\widehat{s})\leq 2\sup_{s\in \mathcal{S}_0}\left\vert \frac{1}{n} \widehat{W}^{\phi}_{n,m}(s)- W_{\phi}(s)\right\vert + \left(  W^*_{\phi}-\sup_{s\in \mathcal{S}_0}W_{\phi}(s) \right),
\end{equation}
the control of deficit of $W$-ranking performance of empirical maximizers of \eqref{eq:crit_emp} can be deduced from the concentration properties of the process \eqref{eq:W_process}.

\subsection{Generalization Error Bounds and Model Selection} \label{sec:bound}

The corollary below describes the generalization capacity of scoring rules based on empirical maximization of $W_{\phi}$-ranking performance criteria. It straightforwardly results from Theorem \ref{thm:bound_twosample} combined with the bound \eqref{eq:perfineq}.

\begin{corollary}\label{cor:bound_twosample}Let $\hat{s}$ be an empirical $W_{\phi}$-ranking performance maximizer over the class $\S_0$, \textit{i.e.} $\hat{s} \in \argmax_{s\in\S_0} \widehat{W}_{n,m}^{\phi}(s)$.
	Under the assumptions of Proposition \ref{prop:hajek}, for any $\delta \in (0,1)$, we have with probability at least $1-\delta$:

	\begin{equation}
		W^*_{\phi}-W_{\phi}(\hat{s}) \leq 2 \sqrt{\frac{\log(C_2/\delta)}{p C_3N}} + \left(  W^*_{\phi}-\sup_{s\in \mathcal{S}_0}W_{\phi}(s) \right),
	\end{equation}
as soon as   $N\geq 1/(p \min(p,1-p)^2C_3C_4^2)  \log(C_2/\delta)$ and $\delta \leq C_2e^{-C_1^2C_3}$ 
 where the constants $C_i, \; i \leq 4$, being the same as those involved in Theorem \ref{thm:bound_twosample}.

\end{corollary}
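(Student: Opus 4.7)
\textbf{Proof proposal for Corollary \ref{cor:bound_twosample}.}

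The plan is to directly combine the deterministic inequality \eqref{eq:perfineq} with the concentration bound of Theorem \ref{thm:bound_twosample}, and then invert the latter in $t$ to express it as a high-probability excess-risk bound. The inequality \eqref{eq:perfineq} already reduces the control of the suboptimality $W_{\phi}^{*}-W_{\phi}(\widehat{s})$ to that of the uniform deviation $\sup_{s\in\S_0}\bigl|\tfrac{1}{n}\widehat{W}_{n,m}^{\phi}(s)-W_{\phi}(s)\bigr|$ plus the deterministic approximation term $W_{\phi}^{*}-\sup_{s\in\S_0}W_{\phi}(s)$, so only the stochastic supremum needs a probabilistic treatment.

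First, I would set $t=\sqrt{\log(C_2/\delta)/(pC_3 N)}$, so that the right-hand side $C_2 e^{-p C_3 N t^{2}}$ of Theorem \ref{thm:bound_twosample} equals $\delta$. Second, I would check that this choice of $t$ lies in the admissible range $[C_1/\sqrt{pN},\; C_4\min(p,1-p)]$ required by the theorem. The lower bound $t\ge C_1/\sqrt{pN}$ is equivalent to $\log(C_2/\delta)\ge C_1^{2} C_3$, i.e.\ to the assumption $\delta\le C_2 e^{-C_1^{2} C_3}$ stated in the corollary. The upper bound $t\le C_4\min(p,1-p)$ is equivalent to
\begin{equation*}
\frac{\log(C_2/\delta)}{p C_3 N}\le C_4^{2}\min(p,1-p)^{2},
\end{equation*}
which rearranges exactly into the sample-size condition $N\ge \log(C_2/\delta)/(p\min(p,1-p)^{2}C_3 C_4^{2})$ of the statement.

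Once both range conditions are verified, Theorem \ref{thm:bound_twosample} gives, with probability at least $1-\delta$,
\begin{equation*}
\sup_{s\in\S_0}\Bigl|\tfrac{1}{n}\widehat{W}_{n,m}^{\phi}(s)-W_{\phi}(s)\Bigr|\le \sqrt{\frac{\log(C_2/\delta)}{pC_3 N}}.
\end{equation*}
Plugging this into \eqref{eq:perfineq} yields the announced bound, with the factor $2$ coming from the standard ERM argument already embedded in \eqref{eq:perfineq}.

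I do not expect any serious obstacle: the corollary is essentially the inversion of a tail bound along the classical pattern for empirical risk minimization, and the only care required is bookkeeping the compatibility between the range $[C_1/\sqrt{pN},C_4\min(p,1-p)]$ imposed by Theorem \ref{thm:bound_twosample} and the two quantitative hypotheses on $N$ and $\delta$ in the corollary. The deterministic bias term $W_{\phi}^{*}-\sup_{s\in\S_0}W_{\phi}(s)$ simply passes through unchanged, so no additional argument concerning the approximation capacity of $\S_0$ is needed at this stage.
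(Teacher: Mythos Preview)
Your proposal is correct and follows exactly the approach the paper indicates: the paper simply states that the corollary ``straightforwardly results from Theorem \ref{thm:bound_twosample} combined with the bound \eqref{eq:perfineq},'' and your write-up spells out precisely this inversion of the tail bound together with the verification that the range conditions on $t$ translate into the stated hypotheses on $N$ and $\delta$.
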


The result above establishes that maximizers of the empirical criterion \eqref{Wstat} achieve a classic learning rate bound of order $O_{\mathbb{P}}(1/\sqrt{N})$ when based on a training data set of size $N$, just like in standard classification, see \textit{e.g.} \cite{DGL96}.
Refer to the Appendix section \ref{app:expectation} for the proof of an additional result, that provides a bound in expectation for the deficit of $W_{\phi}$-ranking performance measure, similar to that established in the subsequent analysis, devoted to the model selection issue.

\medskip

\noindent{\bf Model selection by complexity penalization.}
We have investigated the issue of approximately recovering the best scoring rule in a given class $\mathcal{S}_0$ in the sense of the $W_{\phi}$-ranking performance measure \eqref{eq:Wtrue}, which is satisfactory only when the minimum achieved over $\mathcal{S}_0$ is close to $W_{\phi}^*$ of course. We now address the problem of model selection, that is the problem of selecting a good scoring function from one of a collection of {\sc VC} classes $\S_k, \; k\geq1$. A model selection method is a data-based procedure that aims at achieving a trade-off regarding two contradictory objectives, \textit{i.e.} at finding a class $\S_k$ rich enough to include a reasonable approximant of an element of $\S^*$, while being not too complex so that the performance of the empirical minimizer over it $\hat{s}_k= \argmax_{s \in \S_k}\; \widehat{W}^{\phi}_{n,m}(s)$ can be statistically guaranteed. We suppose that all class candidates $\S_k$, $k\geq 1$, fulfill the assumptions of Proposition \ref{prop:hajek} and denote by $\mathcal{V}_k$ the {\sc VC} dimension of the class $\S_k$. Various model selection techniques, based on (re-)sampling or data-splitting procedures, could be naturally considered for this purpose. Here, in order to avoid overfitting, we focus on a complexity regularization approach, of which study can be directly derived from the rate bound analysis previously carried out, that consists in substracting to the empirical ranking performance measure the penalty term (increasing with $\mathcal{V}_k$) given by:
\begin{equation}
\text{pen}(N,k)= B_1\sqrt{\frac{\V_k}{pN}}+\sqrt{\frac{2C\log k}{p^2N}}~,
\end{equation}
 	for $pN \geq B_2 \V_k $ where the constants $B_1$ and  $B_2$ are those involved in Proposition \ref{prop:Ebound_twosample} and $C = 6( \Vert \phi \Vert_{\infty}^2 + 9\Vert \phi' \Vert_{\infty}^2+ 9\vert\vert \phi''\vert\vert_{\infty}^2)$.
The scoring function selected maximizes the penalized empirical ranking performance measure, it is $\hat{s}_{\hat{k}}(z)$ where:
\begin{equation}
\hat{k}=\argmax_{k\geq 1}\left\{ \frac{1}{n} \widehat{W}^{\phi}_{n,m}(s)   - \text{pen}(N,k)  \right\}~.
\end{equation}

The result below shows that the scoring rule $\hat{s}_{\hat{k}}$ nearly achieves the expected deficit of $W_{\phi}$-ranking performance that would have been attained with the help of an oracle, revealing the model minimizing $W^*_{\phi}-\mathbb{E}[ W_{\phi}(\hat{s}_k)]$.

\begin{proposition}\label{cor:oracle} Suppose that the assumptions of Proposition \ref{prop:hajek} are fulfilled for any class $\S_k$ with $k\geq 1$ and that $\sup_{k\geq 1}\V_k<+\infty$. Then, we have:
	\begin{multline}
	W_{\phi}^*  - \mathbb{E}\left[W_{\phi}(\hat{s}_{\hat{k}})\right] \leq \\
	\underset{k\geq 1}{\min} \left\{2\text{pen}(N,k) + \left( W^*_{\phi}-\sup_{s\in \S_k}W_{\phi}(s) \right)   \right\}   + 2\sqrt{\frac{C}{p^2N}}~,
	\end{multline}
	as soon as $pN \geq B_2 \sup_{k\geq 1}\V_k $, where the constant $B_2>0$ is the same as that involved in Proposition \ref{prop:Ebound_twosample} and $C = 6(\Vert \phi \Vert_{\infty}^2 + 9\Vert \phi' \Vert_{\infty}^2+ 9\vert\vert \phi''\vert\vert_{\infty}^2)$.
\end{proposition}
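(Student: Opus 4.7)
The plan is to run the classical oracle-inequality argument for penalized empirical risk maximization, tailored to the $W_{\phi}$-ranking setting: the bias $W^*_\phi - \sup_{s \in \S_k} W_\phi(s)$ will come from the within-class approximation, the $2\,\text{pen}(N,k)$ term will absorb the in-model stochastic fluctuation calibrated against the VC complexity, and the residual $2\sqrt{C/(p^2N)}$ will absorb the extra price paid for data-driven model selection.

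I would start from the defining optimality of $\hat{k}$: for every fixed $k\geq 1$ and every $s\in\S_k$,
\begin{equation*}
\frac{1}{n}\widehat{W}^{\phi}_{n,m}(\hat{s}_{\hat{k}}) - \text{pen}(N,\hat{k}) \;\geq\; \frac{1}{n}\widehat{W}^{\phi}_{n,m}(\hat{s}_{k}) - \text{pen}(N,k) \;\geq\; \frac{1}{n}\widehat{W}^{\phi}_{n,m}(s) - \text{pen}(N,k).
\end{equation*}
Picking $s_k^{\varepsilon}\in\S_k$ with $W_\phi(s_k^{\varepsilon})\geq \sup_{s\in\S_k}W_\phi(s)-\varepsilon$, substituting $s=s_k^{\varepsilon}$ into this chain, rearranging, and letting $\varepsilon\downarrow 0$ yield the pointwise oracle inequality
\begin{equation*}
W^*_\phi - W_\phi(\hat{s}_{\hat{k}}) \;\leq\; \bigl[W^*_\phi - \sup_{s\in\S_k}W_\phi(s)\bigr] + \Delta_k + \text{pen}(N,k) - \text{pen}(N,\hat{k}) + \Delta_{\hat{k}},
\end{equation*}
where $\Delta_k := \sup_{s\in\S_k}\bigl|\frac{1}{n}\widehat{W}^{\phi}_{n,m}(s) - W_\phi(s)\bigr|$. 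Taking expectation and invoking Proposition \ref{prop:Ebound_twosample} to bound $\mathbb{E}[\Delta_k] \leq B_1\sqrt{\V_k/(pN)}\leq\text{pen}(N,k)$ produces $\mathbb{E}[\Delta_k] + \text{pen}(N,k) \leq 2\,\text{pen}(N,k)$.

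The main obstacle is then to control the random-index term $\mathbb{E}\bigl[\Delta_{\hat{k}} - \text{pen}(N,\hat{k})\bigr]$, which I majorize by $\mathbb{E}[\sup_{k\geq 1}(\Delta_k - \text{pen}(N,k))_+]$ and treat through concentration combined with a union bound over the model index. Via the linearization of Proposition \ref{prop:hajek} and a McDiarmid-type bounded-differences argument -- a single perturbation of $\bX_i$ or $\bY_j$ shifts $\frac{1}{n}\widehat{W}^{\phi}_{n,m}(s)$ by at most $O((\|\phi\|_\infty + \|\phi'\|_\infty + \|\phi''\|_\infty)/n)$, which dictates the constant $C = 6(\|\phi\|_\infty^2 + 9\|\phi'\|_\infty^2 + 9\|\phi''\|_\infty^2)$ -- one obtains sub-Gaussian concentration of $\Delta_k$ around its mean with variance proxy of order $C/(p^2N)$. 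Combining this with $\mathbb{E}[\Delta_k]\leq B_1\sqrt{\V_k/(pN)}$ and the elementary inequality $(\sqrt{a}+u)^2\geq a + u^2$ applied to $a = 2C\log k/(p^2N)$ gives
\begin{equation*}
\mathbb{P}\bigl(\Delta_k > \text{pen}(N,k) + u\bigr) \;\leq\; k^{-\alpha}\exp\!\bigl(-c\,p^2 N u^2/C\bigr),
\end{equation*}
with $\alpha>1$ once numerical constants are absorbed, so that
\begin{equation*}
\mathbb{E}\Bigl[\sup_{k\geq 1}(\Delta_k - \text{pen}(N,k))_+\Bigr] \;\leq\; \Bigl(\sum_{k\geq 1} k^{-\alpha}\Bigr)\!\int_0^\infty \exp(-c\,p^2Nu^2/C)\,du \;\leq\; 2\sqrt{C/(p^2N)}.
\end{equation*}
The delicate point here is constant tracking: the coefficient of $\log k$ inside the penalty must be large enough relative to the McDiarmid sub-Gaussian parameter to guarantee $\alpha>1$, which is precisely why the statement fixes the constant $C$ with the factors $9$ and $6$ as above.

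Since the pointwise bound above is valid for every $k\geq 1$, I conclude by taking the minimum over $k$ and recover the announced oracle inequality. The sample-size condition $pN\geq B_2\sup_k\V_k$ is inherited from the domain of validity of Proposition \ref{prop:Ebound_twosample} and ensures uniform applicability of the in-model expectation control across the model index.
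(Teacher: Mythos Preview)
Your argument is correct and follows essentially the same route as the paper: both proofs combine the expectation bound of Proposition~\ref{prop:Ebound_twosample} with a McDiarmid bounded-differences inequality (this is the paper's Lemma~\ref{lemm:devmeanbound}, which yields exactly the sub-Gaussian parameter $C = 6(\Vert \phi \Vert_{\infty}^2 + 9\Vert \phi' \Vert_{\infty}^2+ 9\vert\vert \phi''\vert\vert_{\infty}^2)$), then exploit the $\sqrt{2C\log k/(p^2N)}$ component of the penalty inside a union bound over $k$ to obtain a $k^{-2}$-summable tail, and finally integrate to recover the $2\sqrt{C/(p^2N)}$ residual. The only cosmetic difference is in the bookkeeping: the paper works directly with the penalized empirical criterion $\widehat{W}_{n,m}^{\phi,\hat{k}}(\hat{s}_{\hat{k}})/n - W_{\phi}(\hat{s}_{\hat{k}})$ and bounds its expectation via tail integration, whereas you isolate the random-index piece as $\mathbb{E}[\sup_{k\geq 1}(\Delta_k - \text{pen}(N,k))_+]$; the two are equivalent and your computation of $\alpha=2$, $c=1$ matches the paper's $\sum_{k}k^{-2}<2$ step exactly.
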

Refer to the Appendix section \ref{pf:oracle} for the technical proof.

\subsection{Kernel Regularization for Ranking Performance Maximization}\label{sec:densityreg}
Many successful algorithmic approaches to statistical learning (\textit{e.g.} boosting, support vector machines, neural networks) consist in smoothing the empirical risk/performance functional to be optimized, so as to use computationally feasible techniques based on gradient descent/ascent methods. Concerning the empirical criterion \eqref{Wstat}, although one may choose a regular score generating function $\phi$ (\textit{cf} Remark \ref{rk:nonsmooth}), smoothness issues arise when replacing $F_s$ in \eqref{eq:Wtrue} by the raw empirical $\cdf$ \eqref{eq:emp_cdf_raw}. A classic remedy involves using a kernel-smoothed version of the empirical $\cdf$ instead. 
Let  $K:\mathbb{R}\to \mathbb{R}$ be a second-order Parzen-Rosenblatt kernel \textit{i.e.} a Borelian symmetric function, integrable $\wrt$ the Lebesgue measure such that $\int K(t)dt=1$ and $\int t^2K(t)dt<+\infty$. Precisely, for any $h>0$ and all $t\in\mathbb{R}$, define the smoothed approximation of the $\cdf$ $F_s(t)$:

\begin{equation}\label{eq:smooth_cdf}
\widetilde{F}_{s,h}(t)=\int_{ \mathbb{R}}\kappa\left(\frac{t-u}{h}\right)F_s(du)~,
\end{equation}

\noindent where $\kappa(t)=\int_{-\infty}^tK(u)du$ and $h>0$ is the bandwidth that determines the degree of smoothing, see \textit{e.g.} \cite{Nadaraya64}. The uniform integrated error $\sup_{s\in \S_0}\int\vert \widetilde{F}_{s,h}(t)-F_s(t)\vert dt $ is shown to be of order $O(h^2)$ under the assumptions recalled below, see \cite{Jones90}. 

\begin{hyp}\label{hyp:approx}
	Let $R>0$. For all $s$ in $\S_0$, the cumulative distribution function $F_s$ is differentiable with derivative $f_s$ such that $\int(f_s'(t))^2dt\leq R$.
\end{hyp}

\begin{hyp}\label{hyp:kernelreg} The kernel function $K$ is of the  form $K_1 \circ K_2$, where $K_1$ is a function of bounded variation and $K_2$ is a polynomial.
\end{hyp}

Notice that Assumption \ref{hyp:approx} is fulfilled as soon as Assumption \ref{hyp:sabscont} is satisfied with $R\geq M$. The statistical counterpart of \eqref{eq:smooth_cdf} is then:

\begin{equation}
\widehat{F}_{s,N,h}(t)= \frac{1}{N}\sum_{i=1}^n \kappa\left( \frac{t-s(\mathbf{X}_i)}{h} \right) + \frac{1}{N}\sum_{j=1}^m \kappa\left( \frac{t-s(\mathbf{Y}_j)}{h} \right).
\end{equation}

\noindent A smooth version of the theoretical criterion \eqref{eq:Wtrue} is given by:

\begin{equation}\label{eq:Wsmooth}
\widetilde{W}_{\phi,h}(s) = \mathbb{E} [(\phi \circ \widetilde{F}_{s,h})(s(\bX))]~,
\end{equation}

\noindent for all $s\in \mathcal{S}$ and an empirical version of the latter is $\widehat{W}^{\phi}_{n,m,h}(s)/n$, where:

\begin{equation}\label{eq:Wsmooth_emp}
\widehat{W}^{\phi}_{n,m,h}(s)=\sum_{i=1}^n(\phi \circ  \widehat{F}_{s,N,h})(s(\bX_i))~.
\end{equation}
For any maximizer $\tilde{s}$ of \eqref{eq:Wsmooth_emp} over the class $\S_0$ of scoring function candidates, we almost-surely have:
\begin{multline}\label{eq:decomp_smooth}
W_{\phi}^*-W_{\phi}\left( \tilde{s} \right)\leq 2\sup_{s\in \S_0}\left\vert \frac{1}{n}\widehat{W}^{\phi}_{n,m,h}(s) - 	\widetilde{W}_{\phi,h}(s)   \right\vert + \sup_{s\in \S_0}\left\vert \widetilde{W}_{\phi,h}(s) - W_{\phi}(s) \right\vert \\
+\left\{ W_{\phi}^*- \sup_{s\in \S_0}W_{\phi}(s) \right\}~.
\end{multline}
This decomposition is similar to that obtained  in \eqref{eq:perfineq} for maximizers of the criterion \eqref{Wstat}, apart from the additional bias term. Since the latter can be shown to be of order $O(h^2)$ under appropriate regularity conditions and the first term on the right hand side of the equation above can be controlled like in Theorem \ref{thm:bound_twosample}, one may bound the deficit of $W_{\phi}$-ranking performance measure of $\tilde{s}$ as follows.

\begin{proposition}\label{prop:erm_bound2}
	Suppose that the assumptions of Proposition \ref{prop:hajek}  are fulfilled, as well as Assumptions \ref{hyp:approx} and \ref{hyp:kernelreg}.  Let $\tilde{s}$ be any maximizer of the smoothed criterion \eqref{eq:Wsmooth_emp} over the class $\S_0$. Then, for any $\delta\in (0,1)$, there exist constants $C_1, \; C_3 >0, \; C_2 \geq 24$ depending on $\phi, \; K, \; R, \; \V$,  $C_4\geq C_1$, and  $C_5>0$ is a constant depending on $\phi$, $K$ and $R$, such that we have with probability at least $1-\delta$:\\
	
	\begin{equation}
W^*_{\phi}-W_{\phi}(\tilde{s})\leq 2\sqrt{\frac{\log(C_2/\delta)}{pC_3N}}  + C_5h^2 + \left\{  W^*_{\phi}-\sup_{s\in \mathcal{S}_0}W_{\phi}(s) \right\},
\end{equation}		
as soon as $N\geq 1/(p \min(p,1-p)^2C_3C_4^2)  \log(C_2/\delta)$ and $\delta \leq C_2e^{-C_1^2C_3}$, with $C_3 = \log(1+C_{4}/(4C_{2}))/(C_2C_4)$. 
\end{proposition}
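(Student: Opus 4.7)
The plan is to bound the three terms appearing in the decomposition \eqref{eq:decomp_smooth}: (a) the stochastic fluctuation of the smoothed empirical criterion around its smoothed expectation, (b) the deterministic bias between $\widetilde{W}_{\phi,h}$ and $W_\phi$, and (c) the approximation term $W_\phi^*-\sup_{\S_0} W_\phi$, which is passed through unchanged. Since the right-hand side of \eqref{eq:decomp_smooth} already isolates these three pieces, the work is essentially to supply sub-Gaussian bounds for (a) and an $O(h^2)$ bound for (b).

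For the bias term, I would write
\[
\widetilde{W}_{\phi,h}(s) - W_\phi(s) \;=\; \int \bigl[\phi(\widetilde{F}_{s,h}(t)) - \phi(F_s(t))\bigr]\, dG_s(t)
\]
and Taylor-expand $\phi$ to second order at $F_s(t)$. Assumption \ref{hyp:phic2} bounds $\phi'$ and $\phi''$, so the problem reduces to controlling $\int|\widetilde{F}_{s,h}(t)-F_s(t)|\,dG_s(t)$ plus a term quadratic in $\widetilde{F}_{s,h}-F_s$. Assumption \ref{hyp:sabscont} bounds $g_s$ uniformly by $M$, so one may replace integration against $dG_s$ by Lebesgue integration up to a constant; the standard second-order kernel calculation, together with Assumption \ref{hyp:approx} controlling $\int(f_s')^2$, then yields $\sup_{s\in\S_0}\int|\widetilde{F}_{s,h}-F_s|\,dt = O(h^2)$ as in \cite{Jones90}, while the quadratic remainder is $O(h^4)$ and thus absorbed. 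This produces the $C_5 h^2$ contribution with $C_5$ depending on $\phi$, $K$, $R$.

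For the stochastic term $\sup_{s\in\S_0}|\tfrac{1}{n}\widehat{W}^{\phi}_{n,m,h}(s)-\widetilde{W}_{\phi,h}(s)|$, I would mimic the linearization strategy of Proposition \ref{prop:hajek}, now with the kernel estimator $\widehat{F}_{s,N,h}$ (itself a simple average of $\iid$ transforms of the pooled sample) in place of the raw empirical CDF. A second-order Taylor expansion of $\phi$ at $\widetilde{F}_{s,h}(s(\bX_i))$ yields an $\iid$ leading sum $(1/n)\sum_i\phi(\widetilde{F}_{s,h}(s(\bX_i)))$, a linear term in $\widehat{F}_{s,N,h}-\widetilde{F}_{s,h}$ that splits, via a Hájek-style projection, into two $\iid$ empirical processes (one over the $\bX$-sample and one over the $\bY$-sample) plus a degenerate two-sample $U$-process, and a quadratic remainder. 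Assumption \ref{hyp:kernelreg} is exactly what is needed for the class $\{t\mapsto \kappa((t-s(\cdot))/h):s\in\S_0\}$ to remain VC-type (stability of VC-subgraph classes under composition with a bounded-variation function and with a polynomial), so the empirical and $U$-process pieces are controlled by the same Talagrand/Bousquet concentration bounds used in Theorem \ref{thm:bound_twosample}, producing an $O(1/\sqrt{pN})$ sub-Gaussian tail with the declared constants $C_1,\ldots,C_4$.

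Combining both controls in \eqref{eq:decomp_smooth} and inheriting the regime $N\geq (1/(p\min(p,1-p)^2C_3C_4^2))\log(C_2/\delta)$ and $\delta\leq C_2 e^{-C_1^2 C_3}$ from the smoothed analogue of Theorem \ref{thm:bound_twosample} gives the stated inequality. The main technical obstacle is the bookkeeping in the linearization when the bandwidth $h$ enters the empirical process scale: one must verify that the VC entropy of the kernel-indexed class, and consequently the constants in the concentration bound, remain bounded independently of $h$, so that the $h$-dependence is confined to the $C_5 h^2$ bias and does not degrade the $N^{-1/2}$ rate. Fortunately, because $\widehat{F}_{s,N,h}$ is an $\iid$ average rather than a rank process, the Hájek projection step is strictly simpler here than in Proposition \ref{prop:hajek}, which makes the reduction to standard empirical-process tools essentially parallel.
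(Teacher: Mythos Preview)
Your proposal is correct and follows essentially the same route as the paper: combine the decomposition \eqref{eq:decomp_smooth} with (i) a smoothed analogue of Theorem~\ref{thm:bound_twosample} obtained by rerunning the Taylor/H\'ajek linearization of Proposition~\ref{prop:hajek} with $\widehat{F}_{s,N,h}$ in place of $\widehat{F}_{s,N}$ (invoking Assumption~\ref{hyp:kernelreg} to keep the kernel-indexed class VC-type with $h$-free constants), and (ii) a uniform $O(h^2)$ bias bound via the regularity of $\phi$ and the integrated error estimate of \cite{Jones90} under Assumption~\ref{hyp:approx}. Your observation that the H\'ajek step is simpler here because $\widehat{F}_{s,N,h}$ is a genuine $\iid$ average is a nice point the paper leaves implicit.
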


\noindent The proof is detailed in the Appendix section \ref{pf:erm_bound2}.

\section{Numerical Experiments}\label{sec:num}

It is the purpose of this section to illustrate empirically various points highlighted by the theoretical analysis previously carried out: in particular, the capacity of ranking rules obtained by maximization of empirical $W_{\phi}$-performance measures to generalize well and the impact of the choice of the score generating function $\phi$ on ranking performance from the perspective of $\roc$ analysis. Some practical issues, concerning the maximization of smoothed versions of the empirical $W_{\phi}$-performance criterion, are also discussed through numerical experiments. Additional experimental results can be found in the Appendix section \ref{ann:expes}. All experiments displayed in this article can be reproduced using the code available at \url{https://github.com/MyrtoLimnios/grad\_2sample}.

\subsection{A Gradient-Based Algorithmic Approach}

We start by describing the gradient ascent method (GA) used in the experiments in order to maximize the smoothed criterion \eqref{eq:Wsmooth_emp} obtained by kernel smoothing over the class of scoring functions $\S_0$ considered, as proposed in section \ref{sec:densityreg}, see Algorithm \ref{algo:GA}. 
Precisely, suppose that $\S_0$ is a parametric class, indexed by a parameter space $\Theta \subset \RR^d$ with $d\geq 1$ say: $\S_0=\{s_{\theta}:\X\rightarrow \RR,\;\; \theta\in \Theta\}$. Assume also that, for all $z \in \Z$, the mapping $\theta \in \Theta\mapsto s_{\theta}(z)$ is of class $ \mathcal{C}^1$ (\textit{i.e.} continuously differentiable) with gradient $\partial_{\theta}s_{\theta}(z)$ and that the score-generating function $\phi$ fulfills Assumption \ref{hyp:phic2}.  The gradient of the smoothed ranking performance measure of $s_{\theta}$ \wrt to the parameter $\theta$, is given by: for all $\theta \in \Theta, \; h >0$,

\begin{equation}\label{eq:gradWkernel}
\nabla_{\theta} \left(\widehat{W}^{\phi}_{n,m,h}(s_{\theta})\right) =\sum_{i=1}^{n}  \phi' \left(\widehat{F}_{s_{\theta},N,h}(s_{\theta}(\bX_i))\right)\nabla_{\theta}\left( \widehat{F}_{s_{\theta},N,h}(s_{\theta}(\bX_i)) \right)~,
\end{equation}

where the gradient of $\widehat{F}_{s_{\theta},N,h}(s_{\theta}(z))$ \wrt to $\theta$ is:

\begin{multline}\label{eq:gradcdfkernel}
\nabla_{\theta} \left(\widehat{F}_{s_{\theta},N,h}(s_{\theta}(z)) \right)=  \frac{1}{Nh}  \sum_{i =1}^n K\left( \frac{s_{\theta}(z) - s_{\theta}(\bX_i)}{h}   \right)  \left(\partial_{\theta}s_{\theta}(z) - \partial_{\theta}s_{\theta}(\bX_i)\right)  \\
+ \frac{1}{Nh}  \sum_{j=1}^m K\left( \frac{s_\theta(z) - s_{\theta}(\bY_j)}{h}   \right) \left(\partial_{\theta}s_{\theta}(z) - \partial_{\theta}s_{\theta}(\bY_j)\right)~,
\end{multline}
for any $z \in \Z$, using the fact that $\kappa'=K$.\\

\begin{algorithm}[H]\label{algo:GA}
	\SetAlgoLined
	
	\KwData{Independent $\iid$ samples $\{\bX_i\}_{i\leq n}$ and $\{\bY_j\}_{j\leq m}$.}
	\KwIn{Score-generating function $\phi$, kernel $K$, bandwidth $h >0$, number of iterations $T \geq1$, step size $\eta>0$.}

	\KwResult{Scoring rule $s_{\widehat{\theta}_{n,m}}(z)$.}
	\BlankLine

	Choose the initial point $\theta^{(0)} $ in $\Theta$\;
	
	\For{$t=0, \ldots, \; T-1$}{
		 compute the gradient estimate $ \nabla_{\theta } \left(\widehat{W}^{\phi}_{n,m,h}(s_{\theta^{(t)}}) \right)$ \;
		 
		 update the parameter $\theta^{(t+1)} = \theta^{(t)}  + \eta \nabla_{\theta } \left( \widehat{W}^{\phi}_{n,m,h}(s_{\theta^{(t)}}) \right)$ \;

	}
 Set $\widehat{\theta}_{n,m}= \theta^{(T)}$.
\caption{Gradient Ascent for $W$-ranking performance maximization}
\end{algorithm}

\medskip

In practice, the iterations are continued until the order of magnitude of the variations $\vert\vert\theta^{(t+1)}-\theta^{(t)}\vert\vert$ becomes negligible.
Then, the approximate maximum $s_{\widehat{\theta}_{n,m}}(z)$ output by Algorithm \ref{algo:GA} is next used to rank test data. Averages over several Monte-Carlo replications are computed in order to produce the results displayed in Subsection \ref{sec:resdisc}.  
\subsection{Synthetic Data Generation}\label{sec:synthdata}

We now describe the data generating models used in the simulation experiments, as well as the parametric class of scoring functions, which the learning algorithm previously described is applied to.

\paragraph{Score-generating functions.}

To illustrate the importance of the function $\phi$ in the $W_{\phi}$-performance ranking criterion, we successively consider $\phi_{MWW}(u) = u$ (MWW), $\phi_{Pol}(u) = u^q, \; q \in\NN^* $  (Pol, \cite{Rud06}) and  $\phi_{RTB}(u) = \text{SoftPlus}(u-u_0) + u_0  \text{Sigmoid}(u-u_0)$, $u_0\in (0,1)$ (RTB, smoothed version of \cite{CV07}), where the activation functions are defined by: $\text{SoftPlus}(u) =  (1/\beta) \log (1+ \exp(\beta u))$ and $\text{Sigmoid}(u) = 1/(1 + \exp(-\lambda u))$, $\beta, \lambda >0$ being hyperparameters to fit and control the derivative's slope.

\begin{figure}[!h]
	\centering
	\includegraphics[width=6cm, height=6cm]{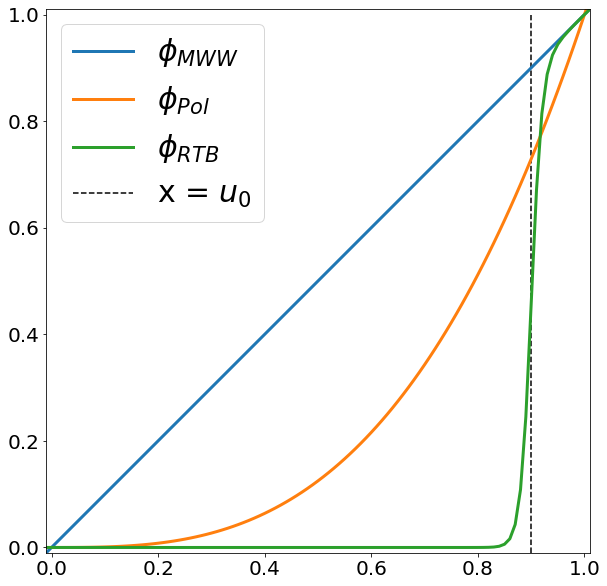}
	\caption{Curves of the three score-generating functions under study: $\phi_{MWW}(u)=u$ in blue,  $\phi_{Pol}(u)=u^3$ in orange, $\phi_{RTB}(u)= \text{SoftPlus}(u-u_0) + u_0  \text{Sigmoid}(u-u_0)$ the smoothed version of $u \mapsto u\mathbb{I}\{u\geq u_0\}$ in green, vertical line at $x = u_0$ in black.}
	\label{fig:scoregenplot}
\end{figure}

\paragraph{Probabilistic models.}
Two classic two-sample statistical models are used here, namely the location and the scale models, where both samples are drawn from multivariate Gaussian distributions.
We denote by $S_d^{+} (\RR)$ the set of positive definite matrices of dimension $d\times d$, by $ \I_d$ the identity matrix. 

\paragraph{Location model.} Inspired by the optimality properties of linear rank statistics regarding shift detection in the univariate setup (\CF Subsection \ref{subsec:2sample_rank_stats}), the model considered stipulates that $\bX \sim \N_d (\mu_X, \Sigma)$ and $\bY \sim \N_d (\mu_Y, \Sigma)$ where $\Sigma\in S_d^{+} (\RR)$ and the mean/location parameters $\mu_X$ and $\mu_Y$ differ. 
The Algorithm \ref{algo:GA} is implemented here with $\Z=\mathbb{R}^d=\Theta$ and $\S_0 = \{s_{\theta}(\cdot)=\langle \cdot, \theta \rangle,\;\; \theta\in \Theta \}$ as class of scoring functions, where $ \langle \cdot, \cdot \rangle$ denotes the Euclidean scalar product on the feature space $\RR^d$, and consequently exhibits no bias caused by the model. Indeed, by computing the $\log$likelihood ratio, one may easily check that the function $\langle \theta^*,\; \cdot \rangle$, where $\theta^* = \Sigma^{-1}(\mu_X-\mu_Y)$, is an optimal scoring function for the related bipartite ranking problem.
Denoting by $\Delta(t)=(1/\sqrt{2\pi})\int_{-\infty}^t\exp(-u^2/2)du$, $t\in \mathbb{R}$, the $\cdf$ of the centered standard univariate Gaussian distribution, one may immediately check that the optimal $\roc$ curve is given by:
\begin{equation*}
\forall \alpha\in (0,1),\;\; \roc^*(\alpha)= 1-\Delta\left( \Delta^{-1}(1-\alpha) + \sqrt{( \mu_X-\mu_Y)^T\Sigma^{-1}(\mu_X-\mu_Y)}\right) .
\end{equation*}

Three levels of difficulty are tested through the implementations Loc1, Loc2 and Loc3. The nearly diagonal covariance matrix of the three models has its eigenvalues in $[0.5,1.5]$ and  $\mu_X = (1 +\varepsilon ) \mu_Y $ with $\varepsilon = 0.10$ (\resp $\varepsilon = 0.20$ and $\varepsilon = 0.30$) for Loc1 (\resp Loc2 and Loc3). The empirical $\roc$ curves over the test pooled samples and additional curves are depicted in Fig. \ref{fig:rocloc1}, \ref{fig:rocloc2}, \ref{fig:rocloc3} for \resp Loc1, 2 and 3. The averaged $\roc$ curves and the \textit{best} one are gathered for the three models in Fig. \ref{fig:roccurvesallloc}. In Fig. \ref{fig:losslocall}, the evolution of the averaged empirical value of the $W_{\phi}$-criteria on the train set during the algorithm is computed. Fig. \ref{fig:roclocrtball} shows the results for Loc2 and 3 for three different parameters of the RTB model with $u_0 \in \{0.70, \; 0.90, \; 0.95\}$.

\paragraph{Scale model.} Consider now the situation where $\bX \sim \N_d (\mu, \Sigma_X)$ and $\bY \sim \N_d (\mu, \Sigma_Y)$, the distributions having the same location vector $\mu \in \RR^d$ but different scale parameters $\Sigma_X$ and $\Sigma_Y$ in $S_d^{+} (\RR)$. The Algorithm \ref{algo:GA} is implemented with  $\Z=\mathbb{R}^d$, $\Theta = S_d^{+} (\RR)$ and $\S_0 = \{s_{\theta}(z)= \langle z,\theta^{-1} z \rangle,\;\; \text{for all} \quad z \in \Z, \;\; \theta\in \Theta \}$, with the notations previously introduced. By computing the likelihood ratio, one immediately checks that $s_{\theta^*}(\cdot)$, with $\theta^* = \Sigma_X^{-1} - \Sigma_Y^{-1}$, is an optimal scoring function for the related scale model. For models Scale1, Scale2 and Scale3, observations are centered, $\Sigma_Y = \I_d$ and $\Sigma_X = \I_d + (\varepsilon/d) H$, where $\varepsilon$ is taken equal to $0.70$, $0.80$ and $0.90$ respectively and $H$ a $d\times d$  symmetric matrix with real entries such that all the eigenvalues of $\Sigma_X \in S_d^{+} (\RR)$ are close to $1$. 

Similar to the location models, the empirical $\roc$ curves over the test pooled samples and additional curves are depicted in Fig. \ref{fig:rocscale1}, \ref{fig:rocscale2}, \ref{fig:rocscale3} for \resp Scale1, 2 and 3. The averaged $\roc$ curves and the \textit{best} one are gathered for the three models in Fig. \ref{fig:roccurvesallscale}. In Fig. \ref{fig:lossscaleall}, the evolution of the averaged empirical value of the $W_{\phi}$-criteria on the train set during the Algorithm is computed. Fig. \ref{fig:rocscalertball} shows the results for Scale2 for three different parameters of the RTB model with $u_0 \in \{0.60, \; 0.70, \; 0.80\}$.

\paragraph{Experimental parameters.} 
In all the experiments below, the pooled train sample is balanced, \ie
$n = m = 150$ and the dimension of the feature space is $d = 15$. Similarly for the test sample with $n = m = 10^6$ and $d = 15$. Concerning the score-generating functions, we consider $q=3$ (Pol)  and $u_0 = 0.9$ (RTB). We use the Gaussian smoothing kernel $K(u) =(1/\sqrt{2\pi}) \exp \{-u^2/2\}$ with a bandwidth $h\sim N^{-1/5}$, yielding an (asymptotically) optimal trade-off between bias and variance. Algorithm \ref{algo:GA} is implemented with $T = 50$ and a learning step size $\eta$ of order $1/\sqrt T$. For each model, $B = 50$ Monte-Carlo replications of the train pooled sample. Based on the latter, a standard deviation for the test average ROC curve is computed for each model.

\paragraph{Evaluation of the criteria.}\label{subsection:evalcrit}
In order to evaluate the performance of the scoring function produced by an early-stopped version of Algorithm \ref{algo:GA} depending on the score-generating function chosen, it is used to score the test sample and the corresponding $\roc$ curves and its average are compared to those of the optimal scoring function $s_{\theta^*}(z)$. Also we consider the \textit{best}/\textit{worst} curves in the sense of \textit{resp.} the minimization/maximization of the generalization error of the set of $\roc$ curves obtained computed over the test pooled sample. 
Particular attention is paid to the behavior of these curves near the origin, which reflects the ranking performance for the instances with highest score values.

\subsection{Results and Discussion}\label{sec:resdisc}

 We now analyze the experimental results, by commenting on the test $\roc$ curves obtained after learning the scoring functions, using the early-stopped version of the Algorithm \ref{algo:GA} described above, that maximize the chosen (smoothed variant of the) $W_{\phi}$-performance measure: MWW, Pol and RTB. We compare them with $\roc^*$.
 All the experiments were run using Python. \\
 
 For both the location and scale models, we ran the algorithm for three increasing levels of difficulty defined by the decreasing value of the parameter $\varepsilon$. 
 Figures \ref{fig:roccurvesallloc} (location) and \ref{fig:roccurvesallscale} (scale) show that the three methods (MWW, Pol, RTB) learn an empirical parameter $\widehat{\theta}_{n,m}$ such that the corresponding $\roc$ curve gets close to $\roc^*$ (red curves)  and the more $\varepsilon$ increases and the more the scoring rule learned generalizes well. Fig. \ref{fig:losslocall} (location) and \ref{fig:lossscaleall} (scale) reveal the monotonicity of the evolution of the empirical criteria, as the number of iterative steps of Algorithm \ref{algo:GA} increases. Unsurprisingly, all the results show an increasing ability to learn a scoring function that maximizes the three $W_{\phi}$-performance measures, as $\varepsilon$ increases (\textit{i.e.} when the distribution $G$ and $H$ are significantly more different from each other). \\

 Analyzing the average of the empirical $\roc$ curves obtained, MWW performs better for the location model as its corresponding curve converges faster to $\roc^*$ for all $\varepsilon$. This phenomenon was expected due to the well-known high power of the related Mann-Whitney-Wilcoxon test statistic in this modeling. The aggregated $\roc$ curve for the Pol method also performs well, while RTB's presents a low performance compared to MWW, see Fig. \ref{fig:roccurvesallloc}. Indeed, considering only the best ranked observations at each iteration in the learning procedure, does not always achieve a good scoring parameter and is enhanced by the early-stopped rule. It results in a higher variance and a larger spectrum of the empirical curves both at the same time, see the light blue curves in Fig. \ref{fig:rocloc2}.3. and \ref{fig:rocloc3}.3. (Loc2 and Loc3). The slow convergence for the RTB method is illustrated with Loc1, where almost both samples are blended/coincide, for which only the $\roc$ curves above the diagonal were kept. For the scale model, the aggregated $\roc$ curves are comparable for the three methods with a slightly higher performance obtained by RTB and we note the faster convergence of the algorithm for this model, see Fig. \ref{fig:lossscaleall}.\\

 Looking at the \textit{best} $\roc$ curves (dark blue lines), defined as those obtained by the scoring function minimizing the generalization error for each criterion, RTB yields to a scoring function that generalizes best for most of the models. In particular, when focussing on the 'best' instances in the learning procedure, the obtained empirical scoring functions have higher performance at the beginning of the $\roc$ curves, see the zoomed plots. Also, choosing the optimal proportion $1 - u_0$ of observations to consider for the score-generating function results in different performance measures. Figure \ref{fig:roclocrtball} gathers the resulting plots for models Loc2 and 3 with $u_0$ in $\{0.7, 0.9, 0.95\}$ while Fig. \ref{fig:rocscalertball} depicts the scale model 2 with $u_0$ in $\{0.6, 0.7, 0.8\}$ and a higher number of loops $T = 70$. Considering the \textit{best} $\roc$ curves for all models shows that when $u_0$ tends to one, the beginning of the curve is accurately learned. Incidentally, note that the proportion of observations considered has to be large enough, so that the optimization algorithm performs well.

 \begin{figure}[!h]
 	\centering
 	\begin{tabular}{cc}
 		\parbox{5.5cm}{	
 			\includegraphics[width=5.5cm, height=5cm]{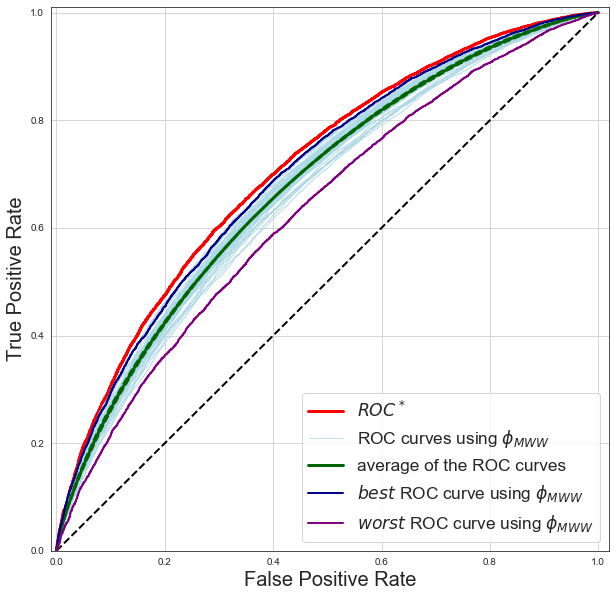}
			{\scriptsize 1.a. $\phi_{MWW}(u) = u$  }\\
 			\includegraphics[width=5.5cm, height=5cm]{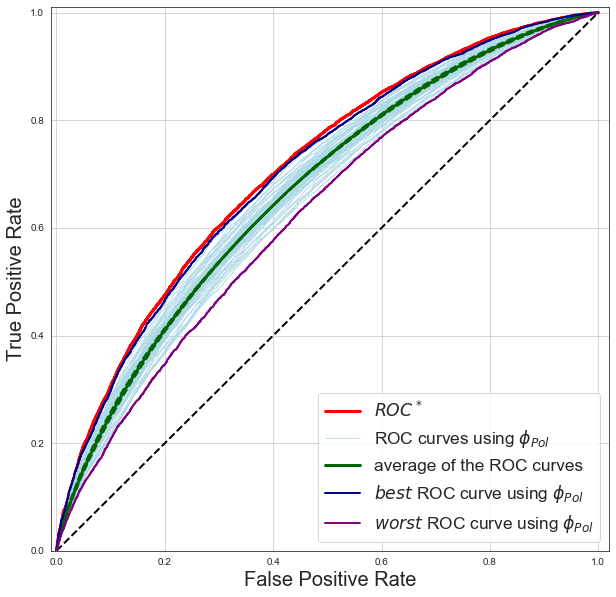}		
 			{\scriptsize 2.b. $\phi_{Pol}(u) = u^3$}\\
 		}
 		\parbox{5.5cm}{
 			\includegraphics[width=5.5cm, height=5cm]{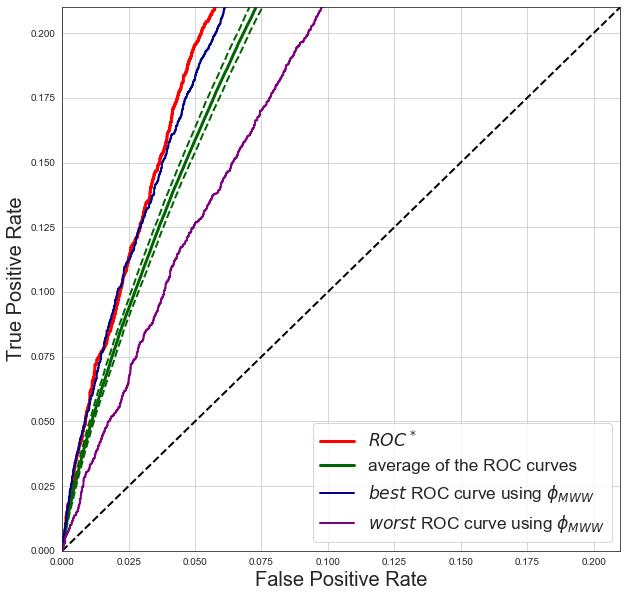}
 			{\scriptsize 1.a. $\phi_{MWW}(u) = u$ }\\
 			\includegraphics[width=5.5cm, height=5cm]{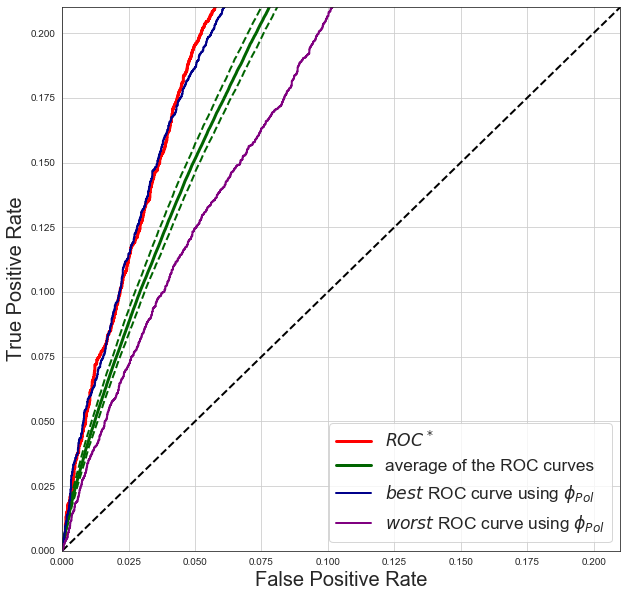}
 			{\scriptsize 2.b. $\phi_{Pol}(u) = u^3$}\\
 		}
 		\medskip
 	\end{tabular}
	
 \end{figure}

\begin{figure}[!h]
	\centering
	\begin{tabular}{cc}
		\parbox{5.5cm}{	
			\includegraphics[width=5.5cm, height=5cm]{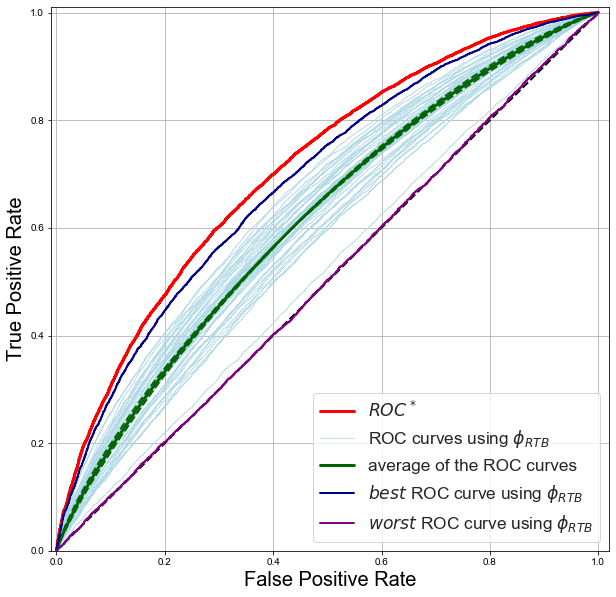}		
			{\scriptsize 3.c. $\phi_{RTB}(u) = u\mathbb{I}\{u \geq 0.9\}$}\\
		}
		\parbox{5.5cm}{
			\includegraphics[width=5.5cm, height=5cm]{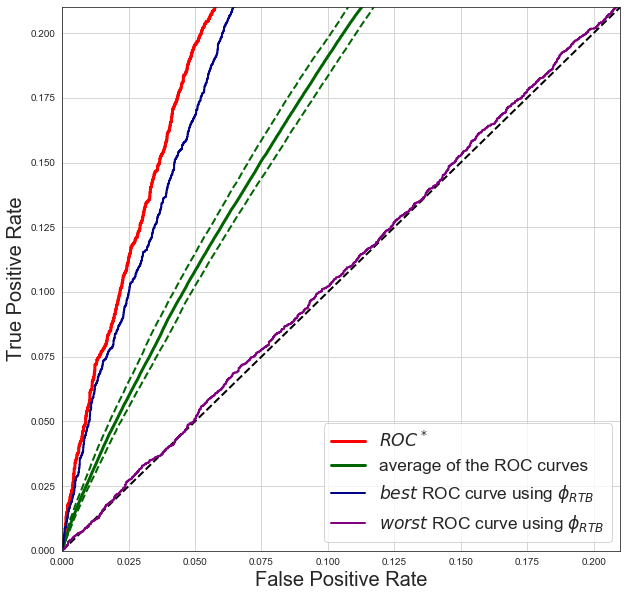}		
			{\scriptsize 3.c. $\phi_{RTB}(u) =  u\mathbb{I}\{u \geq 0.9\}$}\\
		}
		\medskip
	\end{tabular}
	\caption{Empirical $\roc$ curves and average $\roc$ curve for Loc2  ($\varepsilon = 0.20$). Samples are drawn from multivariate Gaussian distributions according to section \ref{sec:synthdata},
		scored with early-stopped GA algorithm's optimal parameter for the class of scoring functions. Hyperparameters: $u_0 = 0.9$, $q = 3$, $B = 50$, $T = 50$. Parameters for the training set: $n=m=150$; $d=15$; for the testing set:  $n=m=10^6$; $d=15$.
		Figures $1, 2, 3$ correspond resp. to the models MMW, Pol, RTB. Light blue curves are the $B(=50)$ $\roc$ curves that are averaged in green (solid line) with $+/-$ its standard deviation (dashed green lines). The dark blue and purple curves correspond to the best and worst scoring functions in the sense of minimization and maximization of the generalization error among the $B$ curves. The red curve corresponds to $\roc^*$.}
	\label{fig:rocloc2}
\end{figure}

  \begin{figure}[!h]
 	\centering
 	\begin{tabular}{cc}
 		\parbox{4cm}{	
 			\includegraphics[width=4cm, height=4cm]{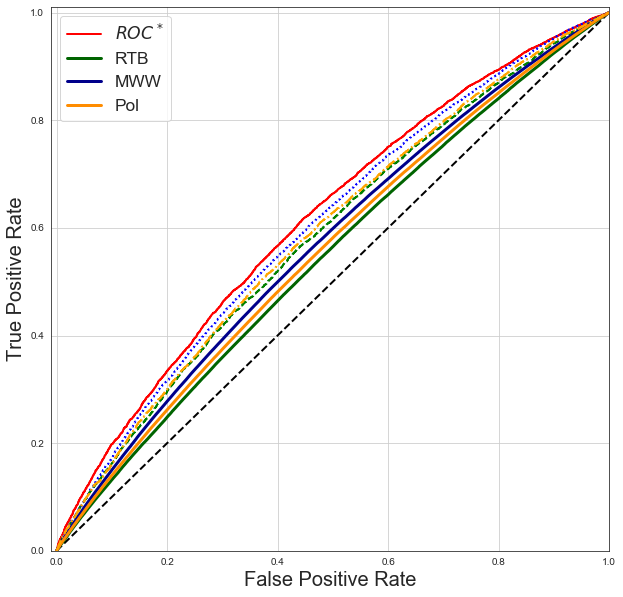}
 			\includegraphics[width=4cm, height=4cm]{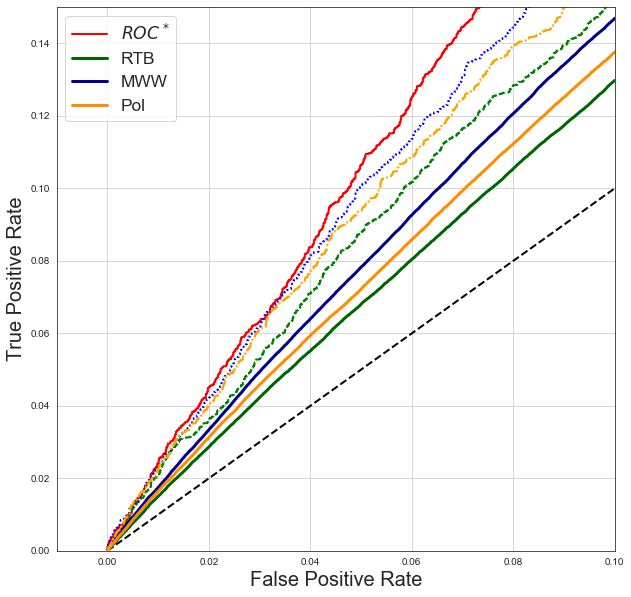}
 			{\scriptsize 1. Loc1, $\varepsilon = 0.10$}\\
 		}
 		\parbox{4cm}{
 			\includegraphics[width=4cm, height=4cm]{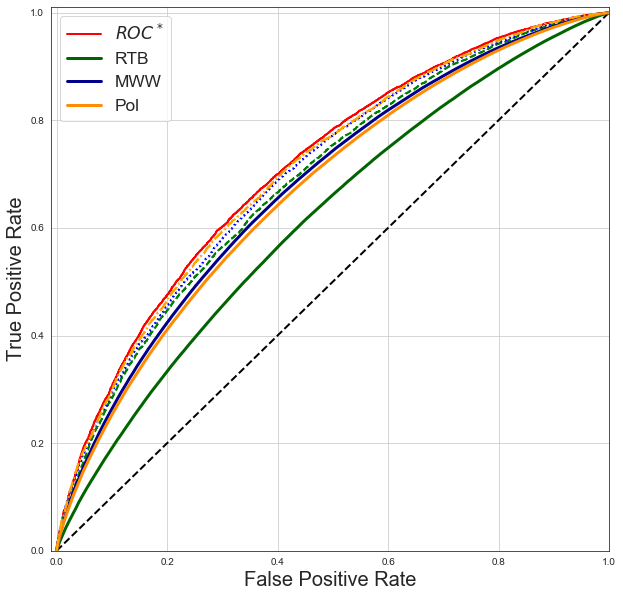}
 			\includegraphics[width=4cm, height=4cm]{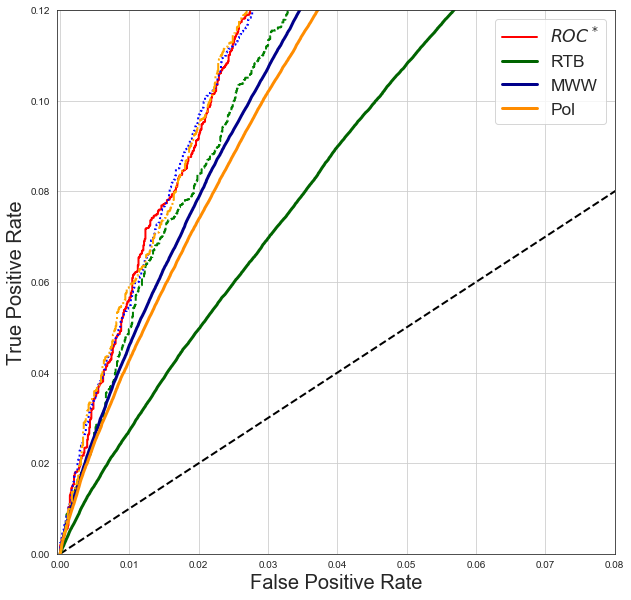}
 			{\scriptsize 2. Loc2, $\varepsilon = 0.20$}\\
 		}
 		\parbox{4cm}{	
 			\includegraphics[width=4cm, height=4cm]{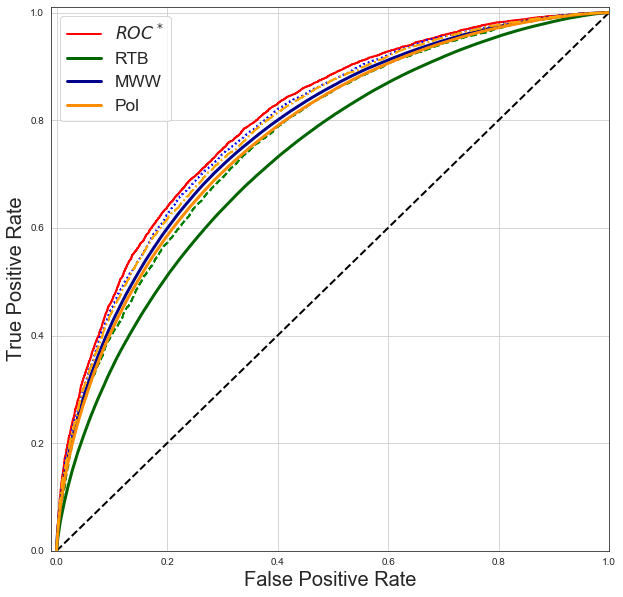}
 			\includegraphics[width=4cm, height=4cm]{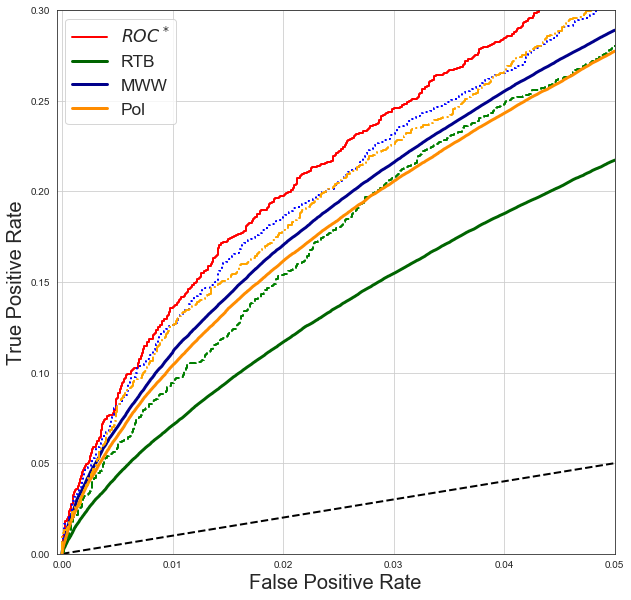}
 			{\scriptsize 3. Loc3, $\varepsilon = 0.30$}\\
 		}
 		\medskip
 	\end{tabular}
 	\caption{Average of the $\roc$ curves (solid line), \textit{best} $\roc$ curves (dashed line) for the three location models Loc1, Loc2 and Loc3. In blue for MWW, orange for Pol, green for RTB, red for $\roc^*$. Samples are drawn from multivariate Gaussian distributions according to section \ref{sec:synthdata}, scored with early-stopped GA algorithm's optimal parameter for the class of scoring functions and averaged after $B = 50$ loops. Hyperparameters: $u_0 = 0.9$; $q = 3$, $B = 50$, $T = 50$. Parameters for the training set: $n=m=150$; $d=15$; for the testing set:  $n=m=10^6$; $d=15$.}
 	\label{fig:roccurvesallloc}
 \end{figure}

 \begin{figure}[!h]
 	\centering
 	\begin{tabular}{cc}
 		\parbox{4cm}{	
 			\includegraphics[width=4cm, height=4cm]{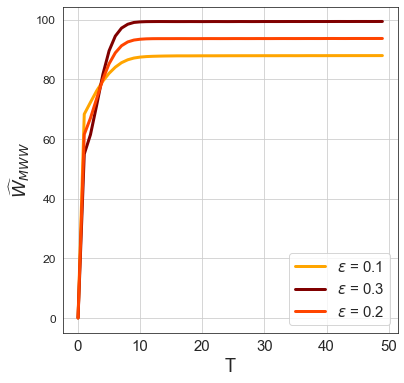}
 			{\scriptsize 1. $\phi_{MWW}(u) = u$ }\\
 		}
 		\parbox{4cm}{
 			\includegraphics[width=4cm, height=4cm]{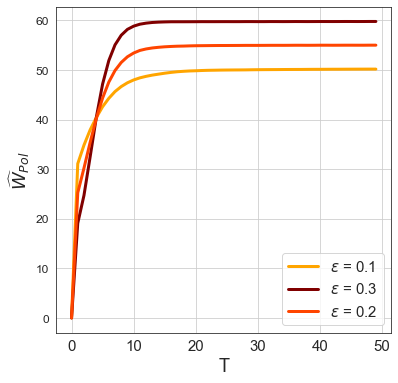}
 			{\scriptsize 2. $\phi_{Pol}(u) = u^3$}\\
 		}
 		\parbox{4cm}{	
 			\includegraphics[width=4cm, height=4cm]{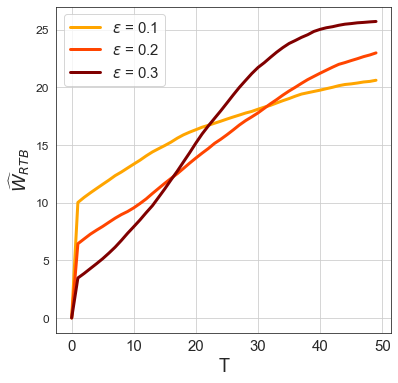}
 			{\scriptsize 3. $\phi_{RTB}(u) = u\mathbb{I}\{u \geq 0.9\}$}\\
 		}
 		\medskip
 	\end{tabular}
 	\caption{Average of the empirical $W_{\phi}$-ranking performance measure over the $B=50$ loops for the three location models Loc1, Loc2 and Loc3. Samples are drawn from multivariate Gaussian distributions according to section \ref{sec:synthdata}, scored with early-stopped GA algorithm's optimal parameter for the class of scoring functions and averaged after $B = 50$ loops. Hyperparameters: $u_0 = 0.9$; $q = 3$, $B = 50$, $T = 50$. Parameters for the training set: $n=m=150$; $d=15$; for the testing set:  $n=m=10^6$; $d=15$.}
 	\label{fig:losslocall}
 \end{figure}

 \begin{figure}[!h]
 	\centering
 	\begin{tabular}{cc}
 		\parbox{5.5cm}{	
 			\includegraphics[width=5.5cm, height=5cm]{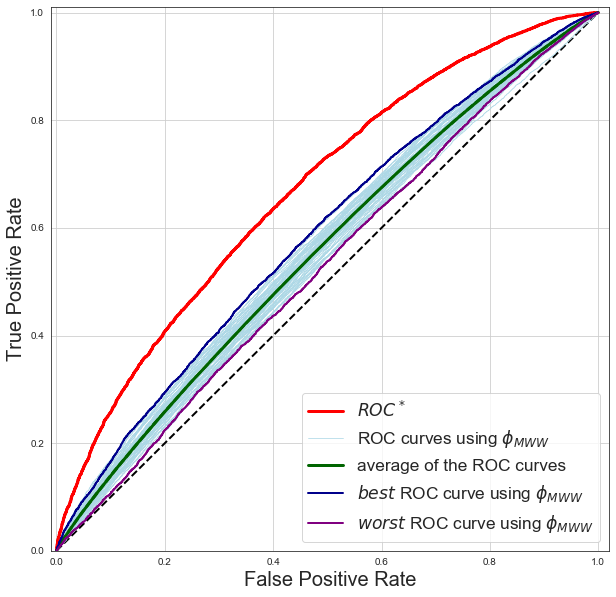}
 			{\scriptsize 1.a. $\phi_{MWW}(u) = u$  }\\
 			\includegraphics[width=5.5cm, height=5cm]{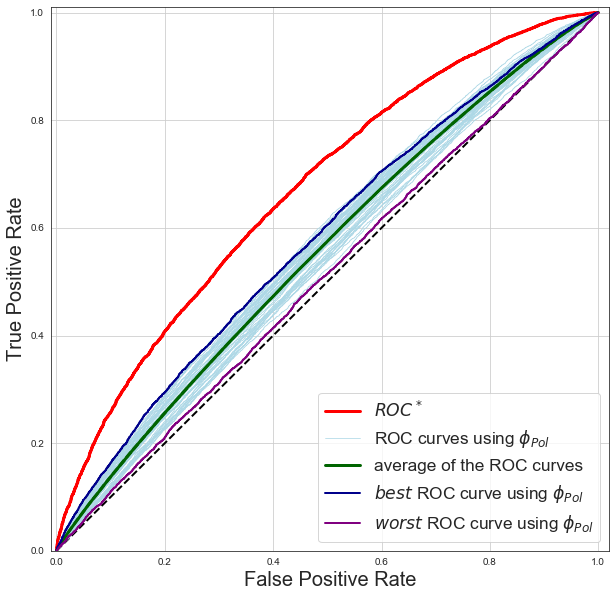}		
 			{\scriptsize 2.b. $\phi_{Pol}(u) = u^3$}\\
 		}
 		\parbox{5.5cm}{
 			\includegraphics[width=5.5cm, height=5cm]{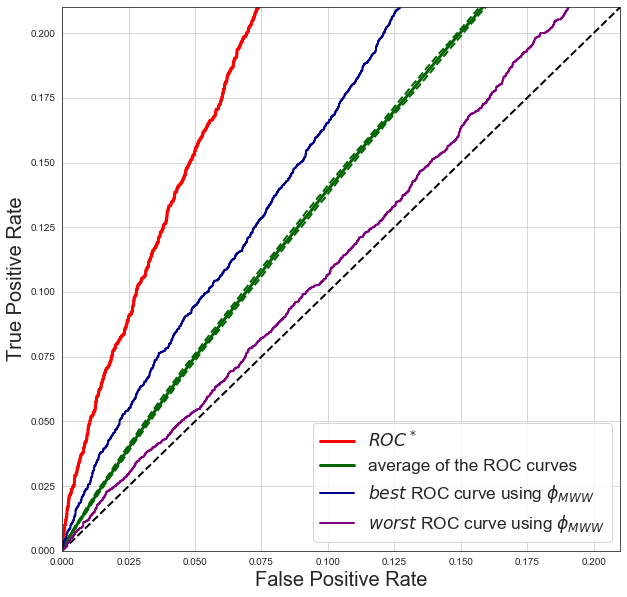}
 			{\scriptsize 1.a. $\phi_{MWW}(u) = u$ }\\
 			\includegraphics[width=5.5cm, height=5cm]{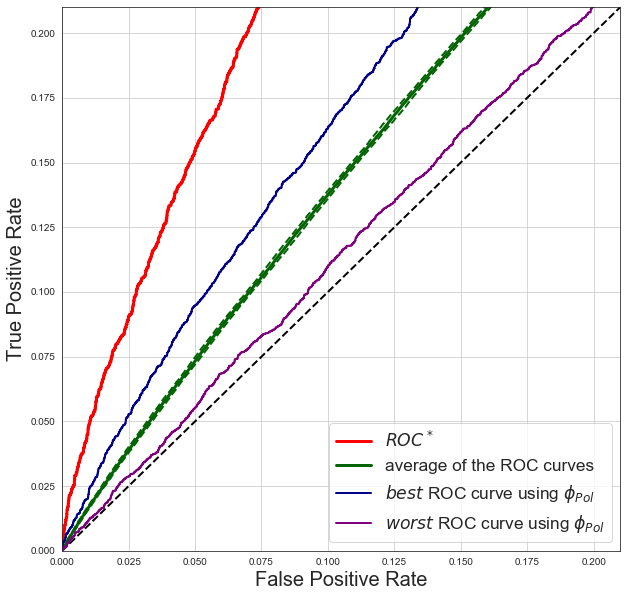}
 			{\scriptsize 2.b. $\phi_{Pol}(u) = u^3$}\\
 		}
 		\medskip
 	\end{tabular}
 	
 \end{figure}
 
 \begin{figure}[!h]
 	\centering
 	\begin{tabular}{cc}
 		\parbox{5.5cm}{	
 			\includegraphics[width=5.5cm, height=5cm]{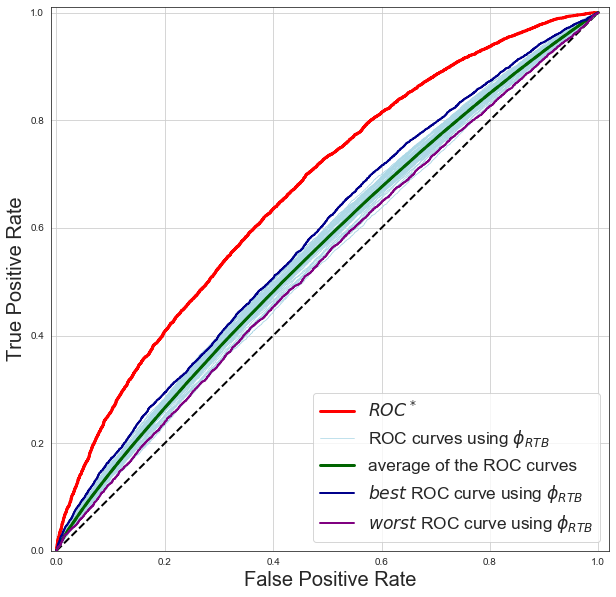}		
 			{\scriptsize 3.c. $\phi_{RTB}(u) = u\mathbb{I}\{u \geq 0.9\}$}\\
 		}
 		\parbox{5.5cm}{
 			\includegraphics[width=5.5cm, height=5cm]{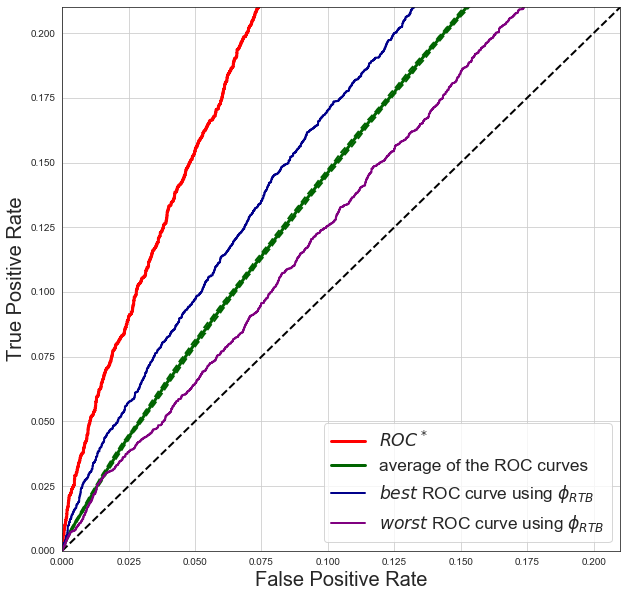}		
 			{\scriptsize 3.c. $\phi_{RTB}(u) =  u\mathbb{I}\{u \geq 0.9\}$}\\
 		}
 		\medskip
 	\end{tabular}
 	\caption{Empirical $\roc$ curves and average $\roc$ curve for Scale1  ($\varepsilon = 0.70$). Samples are drawn from multivariate Gaussian distributions according to section \ref{sec:synthdata},
 		scored with early-stopped GA algorithm's optimal parameter for the class of scoring functions. Hyperparameters: $u_0 = 0.9$, $q = 3$, $B = 50$, $T = 50$. Parameters for the training set: $n=m=150$; $d=15$; for the testing set:  $n=m=10^6$; $d=15$.
 		Figures $1, 2, 3$ correspond \resp to the models MMW, Pol, RTB. Light blue curves are the $B(=50)$ $\roc$ curves that are averaged in green (solid line) with $+/-$ its standard deviation (dashed green lines). The dark blue and purple curves correspond to the best and worst scoring functions in the sense of minimization and maximization of the generalization error among the $B$ curves. The red curve corresponds to $\roc^*$.}
 	\label{fig:rocscale1}
 \end{figure}

 \begin{figure}[!h]
 	\centering
 	\begin{tabular}{cc}
 		\parbox{4cm}{	
 			\includegraphics[width=4cm, height=4cm]{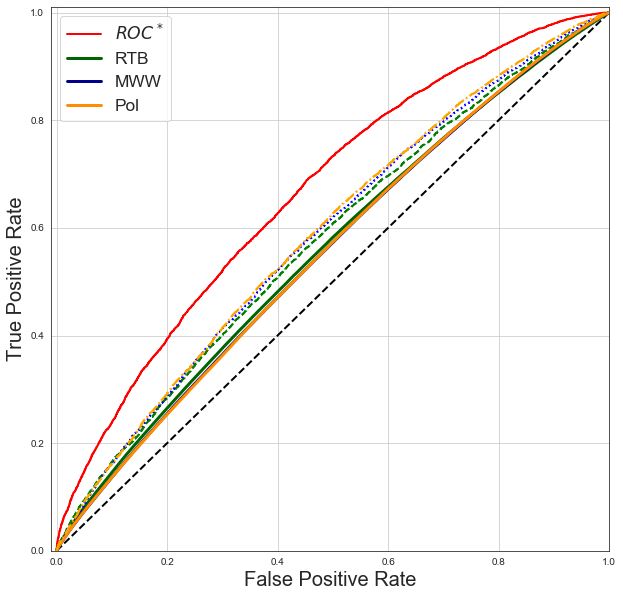}
 			\includegraphics[width=4cm, height=4cm]{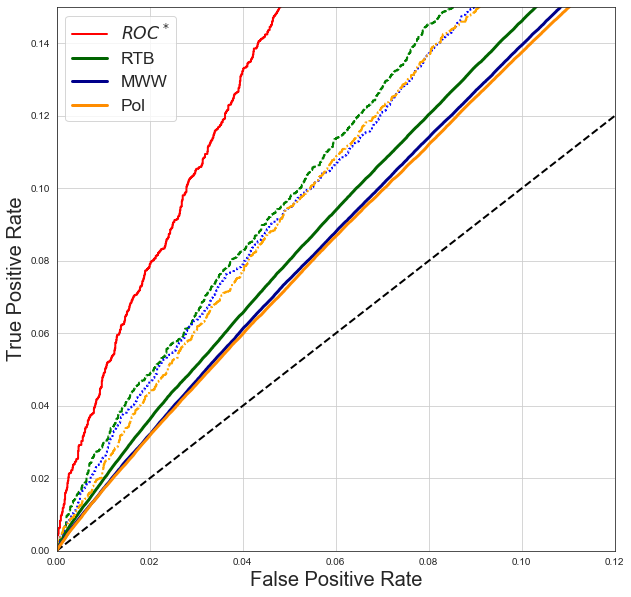}
 			{\scriptsize 1. Scale1, $\varepsilon = 0.70$}\\
 		}
 		\parbox{4cm}{
 			\includegraphics[width=4cm, height=4cm]{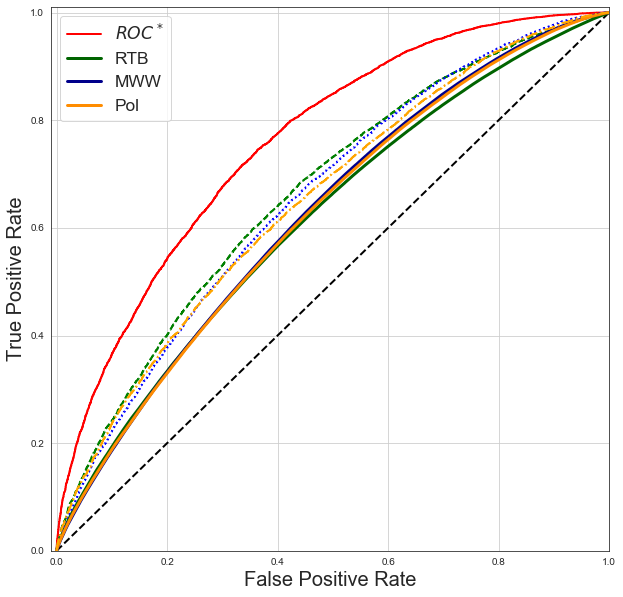}
 			\includegraphics[width=4cm, height=4cm]{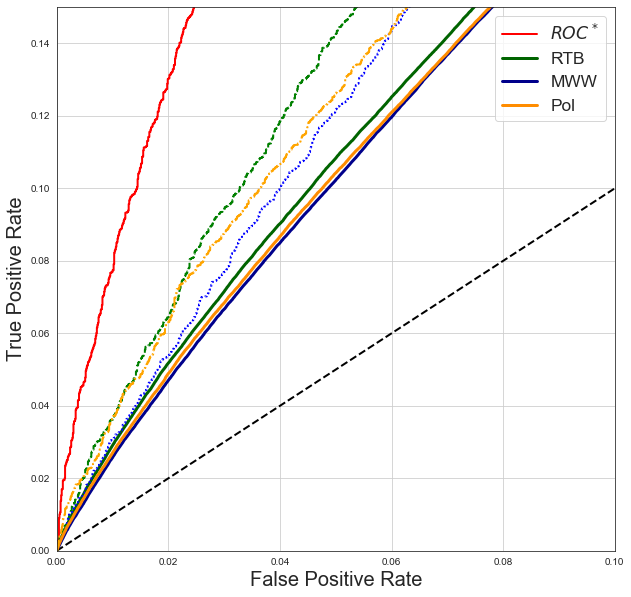}
 			{\scriptsize 2. Scale2, $\varepsilon = 0.90$}\\
 		}
 		\parbox{4cm}{	
 			\includegraphics[width=4cm, height=4cm]{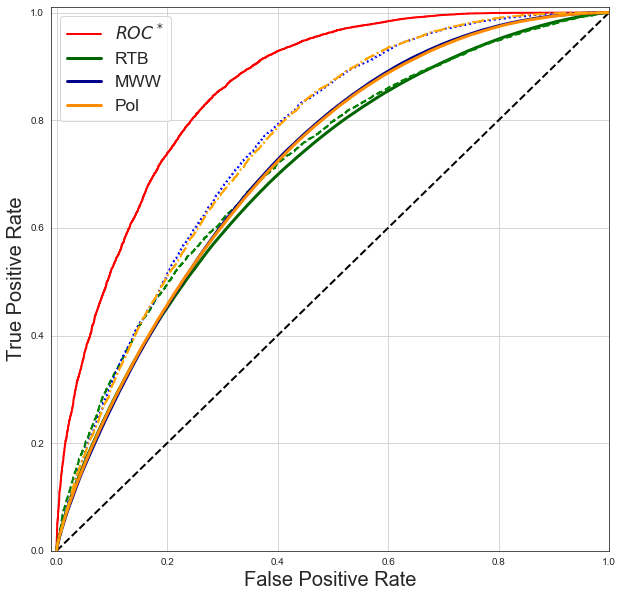}
 			\includegraphics[width=4cm, height=4cm]{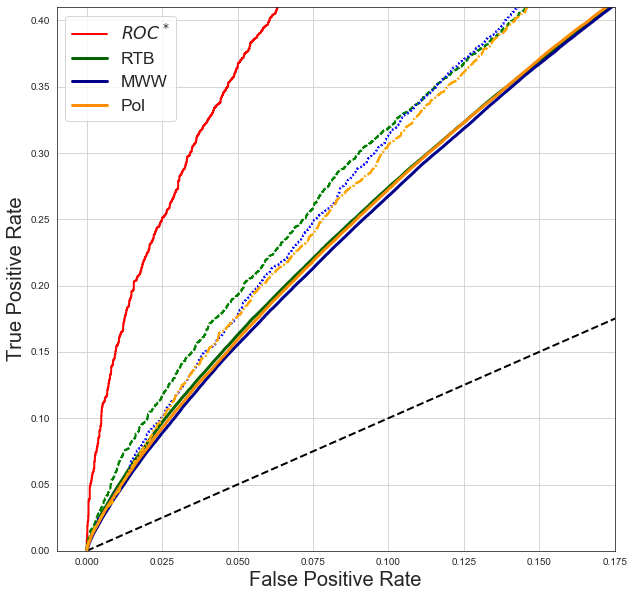}
 			{\scriptsize 3. Scale3, $\varepsilon = 1.10$}\\
 		}
 		\medskip
 	\end{tabular}
 	\caption{Average of the $\roc$ curves (solid line), \textit{best} $\roc$ curves (dashed line) for the three scale models Scale1, Scale2 and Scale3. In blue for MWW, orange for Pol, green for RTB, red for $\roc^*$. Samples are drawn from multivariate Gaussian distributions according to section \ref{sec:synthdata}, scored with early-stopped GA algorithm's optimal parameter for the class of scoring functions and averaged after $B = 50$ loops. Hyperparameters: $u_0 = 0.9$; $q = 3$, $B = 50$, $T = 50$. Parameters for the training set: $n=m=150$; $d=15$; for the testing set:  $n=m=10^6$; $d=15$.}
 	\label{fig:roccurvesallscale}
 \end{figure}

  \begin{figure}[!h]
 	\centering
 	\begin{tabular}{cc}
 		\parbox{4cm}{	
 			\includegraphics[width=4cm, height=4cm]{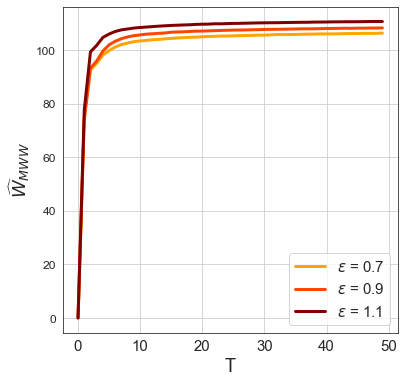}
 			{\scriptsize 1. $\phi_{MWW}(u) = u$ }\\
 		}
 		\parbox{4cm}{
 			\includegraphics[width=4cm, height=4cm]{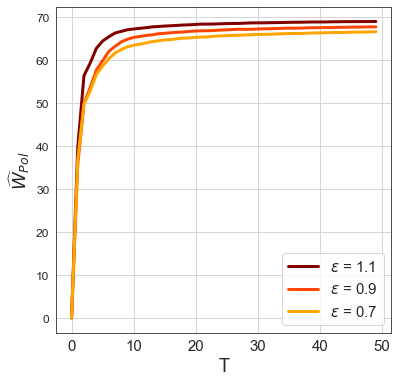}
 			{\scriptsize 2. $\phi_{Pol}(u) = u^3$}\\
 		}
 		\parbox{4cm}{	
 			\includegraphics[width=4cm, height=4cm]{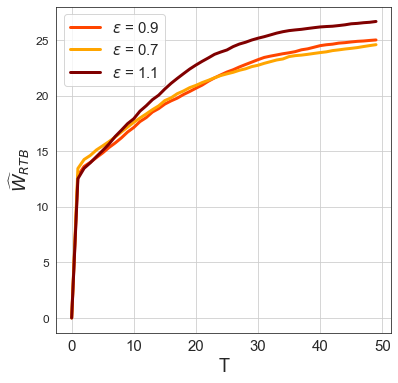}
 			{\scriptsize 3. $\phi_{RTB}(u) = u\mathbb{I}\{u \geq 0.9\}$}\\
 		}
 		\medskip
 	\end{tabular}
 	\caption{Average of the empirical $W_{\phi}$- ranking performance measure over the $B=50$ loops for the three location models Loc1, Loc2 and Loc3. Samples are drawn from multivariate Gaussian distributions according to section \ref{sec:synthdata}, scored with early-stopped GA algorithm's optimal parameter for the class of scoring functions and averaged after $B = 50$ loops. Hyperparameters: $u_0 = 0.9$; $q = 3$, $B = 50$, $T = 50$. Parameters for the training set: $n=m=150$; $d=15$; for the testing set:  $n=m=10^6$; $d=15$.}
 	\label{fig:lossscaleall}
 \end{figure}

\section{Conclusion}

This article argues that two-sample linear rank statistics provide a very flexible and natural class of empirical performance measures for bipartite ranking. We have showed that it encompasses in particular well-known criteria used in medical diagnosis and information retrieval and proved that, in expectation, these criteria are maximized by optimal scoring functions and put the emphasis on specific parts of their ROC curves, depending on the score generating function involved in the criterion considered. We have established concentration results for collections of such statistics, referred to as \textit{two-sample rank processes} here, under general assumptions and have deduced from them statistical learning guarantees for the maximizers of such ranking criteria in the form of a generalization bound of order $O_{\mathbb{P}}(1/\sqrt{N})$, where $N$ means the size of the pooled training sample. Algorithmic issues concerning practical maximization have also been investigated and we have displayed numerical results supporting the theoretical analysis carried out.

\appendix

\section{Definitions and Preliminary Results}\label{app:prelrapp}
For the sake of clarity, crucial concepts and results extensively used in the technical analysis subsequently carried out are first recalled.

\subsection{H\'ajek Projection Method}\label{appsubsec:hajmeth}

The H\'ajek projection method introduced in the seminal contribution \cite{Haj68} aims at decomposing (linearizing) any (possibly complex) square integrable statistic based on independent observations, so as to express it as  an average of independent \rv's plus an uncorrelated term. The proof of Proposition \ref{prop:hajek} crucially relies on this technique. For completeness, it is described in the following lemma, one may refer to Chapter 11 in \cite{vdV98} for further details.

\begin{lemma} {\sc (H\'ajek projection, \cite{Haj68})}  \label{lem:hajek}Let $Z_1,\; \ldots,\; Z_n$ be independent $\rv$'s and $T_n=T_n(Z_1,\; \ldots,\; Z_n)$ be a real-valued square integrable statistic. The \textit{H\'ajek projection} of $T_n$ is defined as $\widehat{T}_n=\sum_{i=1}^n\mathbb{E}[T_n\mid Z_i]-(n-1)\mathbb{E}[T]$.
	It is the orthogonal projection of the square integrable r.v. $T_n$ onto the subspace of all variables of the form $\sum_{i=1}^ng_i(Z_i)$, for arbitrary measurable functions $g_i$ s.t. $\mathbb{E}[g^2_i(Z_i)]<+\infty$.
\end{lemma}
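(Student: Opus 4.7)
The plan is to work in the Hilbert space $L^2(\Omega,\mathcal{F},\mathbb{P})$ and let $\mathcal{H}$ denote the set of random variables of the form $\sum_{i=1}^n g_i(Z_i)$ with $\mathbb{E}[g_i(Z_i)^2]<+\infty$. I will verify two things: first, that $\widehat{T}_n\in\mathcal{H}$, and second, that $T_n-\widehat{T}_n$ is orthogonal to every element of $\mathcal{H}$. Together these two properties characterize $\widehat{T}_n$ as the orthogonal projection of $T_n$ onto $\mathcal{H}$, provided $\mathcal{H}$ is a (closed) linear subspace, which I will note at the outset by remarking that $\mathcal{H}$ is the sum of the $n$ closed subspaces $\mathcal{H}_i=\sigma(Z_i)\cap L^2$ and that their mutual near-orthogonality (which will fall out of the same independence computation below) makes $\mathcal{H}$ closed.

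For membership, I simply write $\widehat{T}_n=\sum_{i=1}^n g_i(Z_i)$ with $g_i(z)=\mathbb{E}[T_n\mid Z_i=z]-\tfrac{n-1}{n}\mathbb{E}[T_n]$; square-integrability of each $g_i(Z_i)$ follows from the conditional Jensen inequality, since $\mathbb{E}[\mathbb{E}[T_n\mid Z_i]^2]\leq \mathbb{E}[T_n^2]<+\infty$.

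For orthogonality, by linearity it suffices to show that for every $i\in\{1,\ldots,n\}$ and every $g$ with $\mathbb{E}[g(Z_i)^2]<+\infty$,
\[
\mathbb{E}\bigl[(T_n-\widehat{T}_n)\,g(Z_i)\bigr]=0.
\]
The left-hand side expands into $\mathbb{E}[T_n g(Z_i)]-\sum_{j=1}^n\mathbb{E}[\mathbb{E}[T_n\mid Z_j]\,g(Z_i)]+(n-1)\mathbb{E}[T_n]\,\mathbb{E}[g(Z_i)]$. I will handle the sum on $j$ by splitting $j=i$ from $j\neq i$. The $j=i$ term equals $\mathbb{E}[\mathbb{E}[T_n\mid Z_i]\,g(Z_i)]$, and by the tower property this is exactly $\mathbb{E}[T_n g(Z_i)]$, cancelling the first term. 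For each $j\neq i$, the independence of $Z_i$ and $Z_j$ gives $\mathbb{E}[\mathbb{E}[T_n\mid Z_j]\,g(Z_i)]=\mathbb{E}[\mathbb{E}[T_n\mid Z_j]]\cdot\mathbb{E}[g(Z_i)]=\mathbb{E}[T_n]\,\mathbb{E}[g(Z_i)]$; there are $n-1$ such terms, so their total contribution is $(n-1)\mathbb{E}[T_n]\,\mathbb{E}[g(Z_i)]$, which is exactly cancelled by the additive constant $(n-1)\mathbb{E}[T_n]$ built into the definition of $\widehat{T}_n$.

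There is no real obstacle here: this is a classical Hilbert-space argument, and the whole content is that the constant $-(n-1)\mathbb{E}[T_n]$ is precisely the right normalization to kill the $n-1$ cross-terms produced by independence. The only points requiring any care are the use of conditional Jensen to stay inside $L^2$, and the preliminary observation that $\mathcal{H}$ is indeed a closed subspace, so that the orthogonal projection exists and is uniquely characterized by the two verified properties.
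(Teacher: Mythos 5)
Your proof is correct. Note that the paper itself gives no proof of this lemma: it is recalled as a classical result of H\'ajek, with a pointer to Chapter 11 of van der Vaart's \emph{Asymptotic Statistics}, and your argument is essentially the standard one found there (membership in the subspace plus orthogonality of the residual, with the tower property handling the $j=i$ term and independence producing the $n-1$ cross-terms that the constant $-(n-1)\mathbb{E}[T_n]$ is designed to cancel). The only loose point is your appeal to ``mutual near-orthogonality'' to justify closedness of $\mathcal{H}$; this can be made precise (after centering, the subspaces $\{g_i(Z_i):\, \mathbb{E}[g_i(Z_i)]=0\}$ are pairwise orthogonal, so their direct sum with the constants is closed), but it is also dispensable: having exhibited an element of $\mathcal{H}$ whose residual is orthogonal to all of $\mathcal{H}$, the Pythagorean identity already shows it is the unique best $L^2$-approximation of $T_n$ in $\mathcal{H}$, which is all the lemma asserts.
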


\subsection{$U$-statistics and $U$-processes}\label{app:subUstatproc}
As mentioned in Section \ref{sec:Rproc}, (degenerate) one/two-sample $U$-statistics are involved in the definition of the residual term introduced in Proposition \ref{prop:hajek}. We recall the definition of such statistics generalizing basic $\iid$ sample averages, as well as some of their properties. See \textit{e.g.} \cite{Lee90} for an account of the theory of $U$-statistics.

\begin{definition}\label{def:onesampleU}{\sc (One-sample $U$-Statistic of degree two)} Let $n\geq 2$. Consider a i.i.d. sequence $X_1,\; \ldots,\; X_n$ drawn from a probability distribution $\mu$ on a measurable space $\X$ and $k: \X^2 \rightarrow \R $ a square integrable function \wrt $\mu\otimes\mu$. The one-sample $U$-statistic of degree $2$ and kernel function $k$ based on the $X_i$'s is defined as:
	\begin{equation}\label{eq:1sUstat}
	U_n(k) = \frac{1}{n(n-1)} \sum_{1\leq i\neq j\leq n} k(X_i, X_j)~.
	\end{equation}
\end{definition}
As can be shown by a basic Lehmann-Scheff\'e argument, the statistic $U_n(h)$ is the unbiased estimator of the parameter $\theta(k)=\int k(x_1,\; x_2)\mu(dx_1)\mu(dx_2)$ with minimum variance.  Its H\'ajek projection  can be expressed as follows: the projection of $U_n(k)-\theta(k)$ onto the space of all random variables $\sum_{i=1}^ng_i(X_i)$ with $\int g_i^2(x)\mu(dx)<+\infty$ is $\widehat{U}_n(k)=(1/n)\sum_{i=1}^nk_1(X_i)$, with $k_1=k_{1,1}+k_{1,2}$, $k_{1,1}(x)=\mathbb{E}[k(X_1,\; x)]-\theta$ and $k_{1,2}(x)=\mathbb{E}[k(x,\; X_2)]-\theta$ for all $x\in \X$. The $U$-statistic \eqref{eq:1sUstat} is said to be \textit{degenerate} when the $k_{1,l}(X_1)$'s are equal to zero with probability one, it is then of order $O_{\mathbb{P}}(1/n)$. Hence, once recentered, the $U$-statistic \eqref{eq:1sUstat} can be written as the $\iid$ average $\widehat{U}_n(h)$ plus a degenerate $U$-statistic. This decomposition is known as the (second) Hoeffding representation of $U$-statistics and provides the key argument to establish limit results for such functionals, see \textit{e.g.} \cite{Ser80}. 
\smallskip

The notion of $U$-statistic can be generalized in several ways, by considering kernels with a number of arguments (\textit{i.e.} degree) higher than $2$ or by extending it to the multi-sample framework. 

\begin{definition}\label{def:twosampleU}{\sc (Two-sample $U$-Statistic of degree $(1,1)$)} Let $n,\; m$ in $\NN^*$. Consider two independent i.i.d. sequences $X_1,\; \ldots,\; X_n$ and $Y_1,\; \ldots,\;  Y_m$ respectively drawn from probability distributions $\mu$ and $\nu$ on the measurable spaces $\X$ and $\Y$.  Let $\ell: \X \times \Y \rightarrow \R $ be a square integrable function \wrt $\mu \otimes \nu$. The two-sample $U$-statistic of degree $(1,1)$, with kernel function $\ell(x,y)$ and based on the $X_i$'s and the $Y_j$'s is defined as:
	\begin{equation}\label{eq:2sUstat}
	U_{n,m}(\ell) = \frac{1}{nm} \sum_{i=1}^n\sum_{j=1}^m \ell(X_i,Y_j)~.
	\end{equation}
\end{definition}

A classic example of two-sample $U$-statistic of degree $(1,1)$ is the Mann-Whitney statistic, with symmetric kernel $\ell(x,y)=\mathbb{I}\{ y<x \}+(1/2)\mathbb{I}\{y=x \}$ on $\mathbb{R}^2$ and degree $(1,1)$. It is a natural (unbiased) estimator of the $\auc$: when computed from univariate samples $X_1,\; \ldots,\; X_n$ and $Y_1,\; \ldots,\; Y_m$ with distributions $H$ and $G$ on $\mathbb{R}$, it is equal to $\auc_{\widehat{H}_m,\widehat{G}_n}$ with the notations of Subsection \ref{subsec:2sample_rank_stats} and can be thus viewed as an affine transform of the rank-sum Wilcoxon statistic \eqref{eq:rank_sum}. \\ The H\'ajek projection of \eqref{eq:2sUstat} is obtained by computing the orthogonal projection of the recentered $\rv$ $U_{n,m}(\ell)-\mathbb{E}[U_{n,m}(\ell)]$ onto the subpace of $L_2$ composed of all random variables $\sum_{i=1}^ng_i(X_i)+\sum_{j=1}^mf_j(Y_j)$ with $\int g_i^2(x)\mu(dx)<+\infty$ and $\int f_j^2(y)\nu(dy)<+\infty$, namely $\widehat{U}_{n,m}(\ell)=(1/n)\sum_{i=1}^n\ell_{1,1}(X_i)+(1/m)\sum_{j=1}^m\ell_{1,2}(Y_j)$, with $\ell_{1,1}(x)=\mathbb{E}[\ell(x,\; Y_1)]-\mathbb{E}[U_{n,m}(\ell)]$ and $\ell_{1,2}(y)=\mathbb{E}[\ell(X_1,\; y)]-\mathbb{E}[U_{n,m}(\ell)]$  for all $(x,y)\in \X\times \Y$. The $U$-statistic $U_{n,m}(\ell)$ is said to be \textit{degenerate} when the random variables $\ell_{1,1}(X_1)$ and $\ell_{1,2}(Y_1)$ are equal to zero with probability one. Similar to \eqref{eq:1sUstat}, the recentered version of the two-sample $U$-statistic of degree $(1,1)$ \eqref{eq:2sUstat} can be written as a sum of two $\iid$ averages 
$\widehat{U}_{n,m}(\ell)$ plus a degenerate $U$-statistic of order $O_{\mathbb{P}}(1/n)+O_{\mathbb{P}}(1/m)$. Again, the Hoeffding decomposition is the key to directly extend limit results known for $\iid$ averages (\textit{e.g.} SLLN, CLT, LIL) to statistics of the type \eqref{eq:2sUstat}. In the subsequent technical analysis, nonasymptotic uniform results are required for $U$-processes, namely collections of $U$-statistics indexed by classes of kernels. By means of the Hoeffding decomposition, concentration bounds for $U$-processes can be obtained by combining classic concentration bounds for empirical processes 
and concentration bounds for degenerate $U$-processes, such as those recalled in \ref{subsec:ineq_Yproc}.

\subsection{$\vc$-type Classes of Functions - Permanence Properties}\label{appsubsec:vcperm}

The concentration inequalities for $U$-processes  recalled in Appendix \ref{subsec:ineq_Yproc} and involved in the proof of the main results stated in this article apply to collections of kernels that are of \vc-type, a classic concept used to quantify the complexity of classes of functions. It is recalled below, see $\eg$ \cite{vdVWell96} for generalizations and further details.

\begin{definition}
A class $\mathcal{F}$ of real-valued functions defined on a measurable space $\Z$ is a bounded $\vc$-type class with parameter $(A,\V)\in (0,\; +\infty)^2$ and constant envelope $L_{\mathcal{F}}>0$ if for all $\varepsilon\in (0,1)$:
\begin{equation}\label{eq:covnumb}
\underset{Q}{\sup} \; N(\mathcal{F},L_2(Q),\varepsilon L_{\mathcal{F}})\leq \left(\frac{A}{\varepsilon} \right)^{\V}~,
\end{equation}

where the supremum is taken over all probability measures $Q$ on $\Z$ and the smallest number of $L_2(Q)$-balls of radius less than $\varepsilon$ required to cover class $\mathcal{F}$ (\ie covering number) is meant by $N(\mathcal{F},L_2(Q),\varepsilon)$.
\end{definition}
Recall that a bounded {\sc VC} class of functions with {\sc VC} dimension $V<+\infty$ is of {\sc VC}-type and fulfills the condition above with $\V=2(V-1)$ and $A=(cV(16e)^V)^{1/(2(V-1))}$, where $c$ is a universal constant, see $\eg$ Theorem 2.6.7 in \cite{vdVWell96}.
The lemma stated below permits to control the complexity of the classes of kernels/functions involved in the Hoeffding decompositions of a two-sample $U$-process of degree $(1,1)$  or of a one-sample $U$-process of degree $2$, \textit{cf} subsection \ref{app:subUstatproc}.

 \begin{lemma} \label{lem:permdeg}
 	Let $X$ and $Y$ be two independent random variables, valued in $\X$ and $\Y$ respectively, with probability distributions $\mu$ and $\nu$. Consider $\mathcal{L}$ a {\sc VC}-type bounded class of kernels $\ell: \X \times \Y \rightarrow \R $
 	with parameters $(A,\V)$ and constant envelope $L_{\mathcal{L}} > 0 $. 
 	Then, the sets of functions $\{x\in\X\mapsto \mathbb{E}[\ell(x,\; Y)]:\; \ell\in \mathcal{L}\}$, $\{ y\in\Y\mapsto \mathbb{E}[\ell(X,\; y)]:\; \ell\in \mathcal{L}\}$,  $\{ \ell(x,y) -  \mathbb{E}[\ell(X,\; y)] - \mathbb{E}[\ell(x,\; Y)]:\; \ell\in \mathcal{L}\}$
 	are also {\sc VC}-type bounded classes.
  \end{lemma}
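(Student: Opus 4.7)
The strategy is to bound the covering numbers of the three derived classes directly in terms of the covering numbers of $\mathcal{L}$, using Jensen's inequality to handle the conditional expectations and a standard sum-permanence argument to handle the centered residual. Let me denote the three classes by $\mathcal{L}_1, \mathcal{L}_2, \mathcal{L}_3$ in the order listed.

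\textbf{Step 1: Covering numbers of $\mathcal{L}_1$ and $\mathcal{L}_2$.} First I would treat $\mathcal{L}_1$; the argument for $\mathcal{L}_2$ is symmetric. Fix an arbitrary probability measure $Q'$ on $\mathcal{X}$ and form the product measure $Q = Q' \otimes \nu$ on $\mathcal{X} \times \mathcal{Y}$. For any two kernels $\ell, \ell' \in \mathcal{L}$, Jensen's inequality applied to the inner integral against $\nu$ gives
\begin{equation*}
\int_\mathcal{X} \bigl(\mathbb{E}[\ell(x,Y)] - \mathbb{E}[\ell'(x,Y)]\bigr)^2\, dQ'(x) \;\leq\; \int_{\mathcal{X}\times\mathcal{Y}} (\ell - \ell')^2\, dQ.
\end{equation*}
Hence any $\varepsilon L_{\mathcal{L}}$-cover of $\mathcal{L}$ in $L_2(Q)$ induces, via $\ell \mapsto \mathbb{E}[\ell(\cdot, Y)]$, an $\varepsilon L_{\mathcal{L}}$-cover of $\mathcal{L}_1$ in $L_2(Q')$. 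Since $Q'$ is arbitrary and $|\mathbb{E}[\ell(x,Y)]| \leq L_{\mathcal{L}}$ provides an envelope, $\mathcal{L}_1$ is VC-type with the same parameters $(A, \V)$ and envelope $L_{\mathcal{L}}$. The same reasoning applied to the roles of $X$ and $Y$ reversed yields the analogous bound for $\mathcal{L}_2$.

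\textbf{Step 2: Covering numbers of $\mathcal{L}_3$ via sum permanence.} Each element of $\mathcal{L}_3$ has the form $\ell(x,y) - f_\ell(x) - g_\ell(y)$ with $f_\ell \in \mathcal{L}_1$ and $g_\ell \in \mathcal{L}_2$. View $\mathcal{L}_1$ and $\mathcal{L}_2$ as classes of functions on $\mathcal{X}\times\mathcal{Y}$ by lifting: a marginal probability measure $Q$ on $\mathcal{X}\times\mathcal{Y}$ projects to marginals $Q_X$ and $Q_Y$, and for $f \in \mathcal{L}_1$, $\int (f - f')^2\, dQ = \int (f - f')^2\, dQ_X$; the same holds for $\mathcal{L}_2$ with $Q_Y$. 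Therefore the lifted classes inherit VC-type bounds with parameters $(A, \V)$. I would then apply the standard permanence property: if $\mathcal{F}_1,\mathcal{F}_2,\mathcal{F}_3$ are VC-type with parameters $(A_i,\V_i)$ and envelopes $L_i$, then the class $\{f_1 + f_2 + f_3: f_i\in\mathcal{F}_i\}$ is VC-type with envelope $L_1 + L_2 + L_3$; this follows by covering each $\mathcal{F}_i$ at scale $\varepsilon L_i / 3$ and taking all sums, so the covering number is bounded by the product of the three individual covering numbers. Applying this with $\mathcal{F}_1 = \mathcal{L}$ and $\mathcal{F}_2, \mathcal{F}_3$ being the signed lifts of $\mathcal{L}_1, \mathcal{L}_2$, each with envelope $L_{\mathcal{L}}$, gives
\begin{equation*}
\sup_Q N\bigl(\mathcal{L}_3, L_2(Q), 3\varepsilon L_{\mathcal{L}}\bigr) \;\leq\; \left(\frac{A}{\varepsilon}\right)^{3\V},
\end{equation*}
so $\mathcal{L}_3$ is VC-type with parameters $(3A, 3\V)$ and envelope $3L_{\mathcal{L}}$.

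\textbf{Main obstacle.} The argument is conceptually routine, but the delicate point is the bookkeeping when lifting the one-variable classes $\mathcal{L}_1, \mathcal{L}_2$ to functions on the product space and combining covers at different scales; one must verify that the supremum in the defining inequality is taken over \emph{all} product probability measures, not only product measures of the form $Q_X \otimes Q_Y$, and that the envelopes carry through cleanly. Once the lifting is justified, the sum-permanence step is standard and the result follows with the explicit constants above.
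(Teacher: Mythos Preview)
Your proof is correct and follows the same approach as the paper. For $\mathcal{L}_1$ and $\mathcal{L}_2$ the paper does exactly what you do: pass from an arbitrary measure $P$ on $\mathcal{X}$ to the product $P\otimes\nu$ on $\mathcal{X}\times\mathcal{Y}$ and apply Jensen's inequality to transfer an $\varepsilon$-cover of $\mathcal{L}$ to an $\varepsilon$-cover of $\mathcal{L}_1$. For $\mathcal{L}_3$ the paper merely says ``a similar reasoning can be applied,'' whereas you spell out the natural sum-permanence argument; your version is more explicit but not genuinely different. Your worry in the ``main obstacle'' paragraph is unfounded: since the lifted functions depend only on one coordinate, integrating against any $Q$ on $\mathcal{X}\times\mathcal{Y}$ automatically reduces to integrating against its marginal, so no restriction to product measures is needed.
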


\begin{proof} 
 Consider first the uniformly bounded class $\mathcal{L}_1$ composed of functions $x \in  \X \mapsto \mathbb{E}[\ell(x,\; Y  )]$ with $\ell\in \mathcal{L}$. Let $\varepsilon >0$ and $P$ be any probability measure on $\X$. 
	Define the probability measure 
	$P_{\nu}(dx, dy) = P(dx)\nu(dy)$ on $\X \times \Y$ and consider a $\varepsilon$-covering of the class $ \mathcal{L} $ with centers $\ell_1, \ldots,\; \ell_K$ \wrt the metric $L_2( P_{\nu})$, $K\geq 1$.  For all $\ell \in  \mathcal{L}$, there exists $k\leq K$ such that:
	\begin{eqnarray*}
		\int_{x \in \X}  \int_{y \in \Y} ( \ell(x,\; y)  - \ell_k(x,\; y  ) )^2 P_{\nu}(dx, dy) \leq \varepsilon^2~.
	\end{eqnarray*}
	By virtue of Jensen's inequality, we have
	\begin{multline*}
	\int_{\X}( \mathbb{E} [   \ell(x,\; Y ) ] - \mathbb{E} [ \ell_k(x,\; Y ) ] )^2 P(dx) \\
	\leq\int_{\X} \mathbb{E} [   (\ell(x,\; Y  )-  \ell_k(x,\; Y  ) )^2]  P(dx)\\
	= \int_{\X}  \int_{\Y} ( \ell(x,\; y)  - \ell_k(x,\; y  ) )^2 \nu(dy) P(dx) \leq \varepsilon^2~.
	\end{multline*}
	Hence, one gets a $\varepsilon$-covering of the class $\mathcal{L}_1$ with balls of centers $\{\mathbb{E} [ \ell_k(\cdot,\; Y )   ]:\; k=1,\; \ldots,\; K\}$ in $L_2(P)$. This proves that 
	$$
	N(\L_1,  L_2(P), \varepsilon L_{\mathcal{L}})  \leq N(\L, L_2(P_{\nu}) , \varepsilon L_{\mathcal{L}}) .
	$$
As a similar reasoning can be applied to the two other classes of functions, one then gets the desired result.
	\end{proof}

\subsection{Concentration Inequalities for Degenerate $U$-processes.}\label{subsec:ineq_Yproc}
In \cite{Major2006} (see Theorem 2 therein), a concentration bound for one-sample degenerate $U$-processes of arbitrary degree indexed by $L_2$-dense classes of non-symmetric kernels is established. The lemma below is a formulation of the latter in the specific case of degenerate $U$-processes of degree $2$ indexed by {\sc VC}-type bounded classes of non-symmetric kernels.

\begin{lemma}\label{thm:major2006} Let $n\geq 2$ and $X_1,\; \ldots,\; X_n$ be $\iid$ random variables drawn from a probability distribution $\mu$ on a measurable space $\X$. Let $\mathcal{K}$ be a class of measurable kernels $k: \X^2 \rightarrow \R $ such that $\sup_{x, x' \in \X^2} \vert k(x,x') \vert \leq D < + \infty$ and $\int_{\X^2} k^2(x, x')\mu(dx) \mu(dx') \leq \sigma^2 \leq D^2$, that defines a degenerate one-sample $U$-process of degree $2$, based on the $X_i$'s: $\{U_n(k)\:\; k\in \mathcal{K}\}$. Suppose in addition that the class $\mathcal{K}$ is of {\sc VC}-type with parameters $(A,\V)$. Then, there exist constants $ C_1>0,\; C_2\geq 1$ and $C_3\geq 0$ depending on $(A,\V)$ such that:
	
	\begin{equation}
	\mathbb{P}\left\{\sup_{k \in \mathcal{K}} \left\vert U_n(k) \right\vert \geq t \right\}\leq C_2 \exp\left\{ -\frac{C_3 (n-1)t}{\sigma} \right\}~,
	\end{equation}
	as soon as $C_1\log(2D/\sigma)\leq (n-1)t/\sigma \leq n\sigma^2/D^2$.
\end{lemma}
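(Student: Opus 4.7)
The proof plan is essentially a specialization/translation exercise: Lemma \ref{thm:major2006} is explicitly stated as a formulation of Theorem 2 of \cite{Major2006} in our particular setting, so the work is to verify that the hypotheses of the general theorem are met and to repackage its conclusion in the form stated.

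First, I would recall that Major's Theorem 2 provides a Bernstein-type tail bound for centered degenerate $U$-processes of arbitrary degree $d$ whose indexing class $\mathcal{K}$ is uniformly bounded (here by $D$), has $L_2$-variance uniformly bounded (here by $\sigma^2$), and satisfies an $L_2$-density condition of the form $N(\mathcal{K}, L_2(Q), \varepsilon) \le (A/\varepsilon)^{\mathcal{V}}$ for all probability measures $Q$. In our setting the degree is $d=2$, and the VC-type hypothesis stated in the lemma gives exactly this $L_2$-density condition, with constant envelope $D$ playing the role of the $L_{\mathcal{F}}$ in the covering-number definition. The degeneracy is part of the hypothesis, so no additional Hoeffding-type projection step is needed here.

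Next I would invoke Major's bound in its degree-$2$ form: there exist absolute constants (depending only on $d=2$ and hence only on $(A,\mathcal{V})$ once the VC-type parameters are fed in) such that, for $t$ in an appropriate range,
\begin{equation*}
\mathbb{P}\!\left\{ \sup_{k\in\mathcal{K}} |U_n(k)| \ge t \right\} \le C_2 \exp\!\left\{-\frac{C_3 (n-1) t}{\sigma}\right\},
\end{equation*}
where the exponent corresponds to the ``small-deviation'' regime of the Bernstein-type inequality, which is linear in $t/\sigma$ when $t$ stays below $n\sigma^2/D^2$ (above this threshold one would enter a sub-Gaussian regime with exponent proportional to $(nt/D)^{2/d}=nt/D$). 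The constants $C_1,C_2,C_3$ come directly from Major's statement after substituting $d=2$, the VC parameters $(A,\mathcal{V})$, and the uniform bound $D$; $C_1$ controls the lower threshold on $t$ ensuring that the covering-entropy integral does not dominate the exponent, which accounts for the $\log(2D/\sigma)$ factor.

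Finally I would verify the two-sided range condition $C_1 \log(2D/\sigma) \le (n-1)t/\sigma \le n\sigma^2/D^2$: the lower bound guarantees that $t$ is large enough for the VC covering numbers to be absorbed into the constant $C_2$ (this is where the logarithm of the covering number enters), while the upper bound keeps us in the Bernstein (linear-exponent) regime of Major's inequality. The only nontrivial translation is checking that the $L_2(\mu\otimes\mu)$-variance bound suffices in place of the more general $L_2(Q)$-variance condition used in \cite{Major2006}; for a degenerate kernel evaluated on i.i.d.\ samples drawn from $\mu$, the relevant measure in the variance is indeed $\mu\otimes\mu$, so this is immediate. The main (and essentially only) obstacle is therefore a careful bookkeeping of how the VC-type parameters $(A,\mathcal{V})$ and the envelope $D$ propagate into the constants $C_1,C_2,C_3$ of Major's inequality; no new probabilistic argument is required.
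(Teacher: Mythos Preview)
Your proposal is correct and matches the paper's own treatment: the paper does not give an independent proof of this lemma but simply presents it as a reformulation of Theorem~2 in \cite{Major2006} specialized to degree~$2$ and to {\sc VC}-type bounded kernel classes. Your bookkeeping of how the VC-type parameters, the envelope $D$, and the variance bound $\sigma^2$ feed into Major's hypotheses and constants is exactly the verification the paper leaves implicit.
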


The next lemma provides a similar nonasymptotic result for degenerate two-sample $U$-processes of degree $(1,1)$.

\begin{lemma}\label{lem:devbound2}
	Let $(n, \; m) \in \NN^*$. Consider  two independent $\iid$ random samples $X_1, \ldots, X_n $ and  $Y_1, \ldots, Y_m$ respectively drawn from the probability distributions $\mu$ and $\nu$ on the measurable spaces $\X$ and $\Y$.  Let $\L$ be a class of degenerate non-symmetrical kernels $\ell : \X \times \Y \to \RR$  such that $\sup_{(x, y) \in \X\times \Y} \vert \ell(x,y) \vert \leq L < + \infty$ and $\int_{\X\times \Y} \ell^2(x, y)\mu(dx) \nu(dy) \leq \sigma^2 \leq L^2$, that defines a degenerate two-sample $U$-process of degree $(1,1)$, based on the $X_i, Y_j$'s: $\{U_{n,m}(\ell)\:\; \ell\in \L\}$. Suppose in addition that the class $\L$ is of {\sc VC}-type with parameters $(A,\V)$. 
Then, for all $t>0$, there exists a universal constant $K>2$ such that:

\begin{equation}
	\mathbb{P} \left\{\sup_{\ell\in \L}  \vert U_{n,m}(\ell) \vert   \geq t  \right\} \leq K 2^{\V}(A/L)^{2\V}e^{4/L^2}\exp \left\{ -  \frac{nmt^2}{ML^2} \right\}~,
\end{equation}

for all $nmt^2 >  \max( 8^4\log(2) L^2 \V ,( \log(2) L^2 \V/2)^{1+\delta}) $, $\delta \in (1,2)$ constant and $M=16^3/2$.
\end{lemma}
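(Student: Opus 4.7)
The plan is to proceed in parallel to the proof of Lemma~\ref{thm:major2006} due to \cite{Major2006}, which establishes an analogous bound in the one-sample framework, adapting the argument to the two-sample structure of degree $(1,1)$.

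First, I would symmetrize. Because each kernel $\ell\in\mathcal{L}$ is doubly degenerate---$\int\ell(\cdot,y)\,\mu(dx)=0$ for all $y\in\mathcal{Y}$ and $\int\ell(x,\cdot)\,\nu(dy)=0$ for all $x\in\mathcal{X}$---a classical decoupling/symmetrization argument allows to compare the supremum of $|U_{n,m}(\ell)|$ with that of the two-sample Rademacher chaos
\[
\frac{1}{nm}\sum_{i=1}^{n}\sum_{j=1}^{m}\varepsilon_i\,\varepsilon'_j\,\ell(X_i,Y_j),
\]
up to a universal multiplicative constant, where $(\varepsilon_i)_{i\leq n}$ and $(\varepsilon'_j)_{j\leq m}$ are independent Rademacher sequences, independent of the data. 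This double symmetrization exploits the degeneracy jointly in both arguments and is what accounts, ultimately, for the quadratic factor $nm$ (rather than $n+m$) in the deviation exponent.

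Next, I would discretize the index class. The {\sc VC}-type assumption provides, for every $k\geq 0$, an $L\cdot 2^{-k}$-net $\mathcal{L}_k\subset\mathcal{L}$ in the $L_2(\mu\otimes\nu)$ metric of cardinality at most $(A\cdot 2^{k})^{\mathcal{V}}$. For each fixed $\ell$ in any such net, a Bernstein-type inequality for degenerate two-sample $U$-statistics---derived by conditioning successively on each sample and iteratively applying a one-sample Bernstein bound exploiting both degeneracies---yields a sub-Gaussian tail of the form $\mathbb{P}\{|U_{n,m}(\ell)|>s\}\leq 2\exp(-c\,nms^2/\sigma^2)$, valid on a range controlled by a Bernstein threshold depending on $L$.

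Then comes the chaining step, central to the argument: combining a union bound over $\mathcal{L}_k$ with the single-kernel tail bound at each resolution level and summing the entropy contributions $\mathcal{V}\log(A\cdot 2^{k})$ into the exponential decay. The lower bound $nmt^2\geq 8^4\log(2)\,L^2\mathcal{V}$ ensures that the sub-Gaussian term dominates the entropic factor at every scale, so that the resulting series is geometric in $k$ and converges to a constant. Tracking the constants arising from the union bound and the Bernstein remainder terms yields the prefactor $K\cdot 2^{\mathcal{V}}(A/L)^{2\mathcal{V}}e^{4/L^2}$ and the exponent $nmt^2/(ML^2)$ with $M=16^3/2$ as stated.

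The main obstacle is the chaining step: one must ensure that the sub-Gaussian regime of the two-sample $U$-statistic concentration dominates uniformly across all chaining scales, which is precisely what motivates the joint condition $nmt^2\geq\max(8^4\log(2)\,L^2\mathcal{V},\,(\log(2)\,L^2\mathcal{V}/2)^{1+\delta})$. Outside this range, one falls into the Bernstein regime in which the chain would no longer sum geometrically with the required exponent. As in the one-sample case treated in \cite{Major2006}, the careful bookkeeping of the interaction between the two independent Rademacher sums and the scale-by-scale concentration bounds is the delicate technical point of the proof.
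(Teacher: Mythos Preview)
Your high-level strategy---symmetrize, then chain---matches the paper's, but two concrete choices diverge in ways that matter for actually closing the argument.

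First, the metric. You propose to chain in the population metric $L_2(\mu\otimes\nu)$. The paper instead chains in the \emph{random empirical} semi-metric
\[
d_{nm}^2(\ell_1,\ell_2)=\frac{1}{nm}\sum_{i\leq n}\sum_{j\leq m}\bigl(\ell_1(X_i,Y_j)-\ell_2(X_i,Y_j)\bigr)^2.
\]
This is the natural choice once you have passed to the Rademacher chaos $T_{n,m}(\ell)=(nm)^{-1}\sum_{i,j}\varepsilon_i\eta_j\ell(X_i,Y_j)$: conditionally on the data, the increments $T_{n,m}(\ell_1-\ell_2)$ are sub-Gaussian with variance proxy exactly $d_{nm}^2(\ell_1,\ell_2)/(nm)$, and the {\sc VC}-type bound on covering numbers holds \emph{uniformly over all probability measures}, hence in particular for the empirical one. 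Chaining in the population metric after symmetrization would force you to relate $d_{nm}$ back to $L_2(\mu\otimes\nu)$ uniformly over the class, an extra step that the paper's route avoids entirely.

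Second, the single-kernel bound. You invoke a Bernstein-type inequality; the paper proves and uses a purely Hoeffding-type bound (its Lemma~\ref{lemma:hoefS}), obtained by the Chernoff method applied to the doubly symmetrized statistic. There is no sub-exponential regime anywhere in the argument, and your reading of the side condition $nmt^2\geq(\log(2)L^2\mathcal{V}/2)^{1+\delta}$ as the boundary of a ``Bernstein regime'' is off: in the paper this condition arises from the choice of the starting level $q_0$ of the chain (set roughly as $2+\lfloor (nmt^2)^{1/(\alpha-1)}\rfloor$), needed so that the coarse-scale term $\varepsilon_{q_0}^{-\mathcal{V}}e^{-8nm(t\beta)^2/L^2}$ is dominated by the desired exponential. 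The other threshold $nmt^2\geq 8^4\log(2)L^2\mathcal{V}$ is, as you correctly guessed, what makes the chaining series geometric.

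In short, your outline is not wrong in spirit, but as written it mixes the symmetrized and unsymmetrized processes and pairs the Rademacher chaos with the wrong metric. Switching to the empirical metric and the conditional Hoeffding bound is what makes the constants in the statement fall out cleanly.
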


Its proof is given in \ref{pf:2s_Uproc} and is inspired from that of 
Lemma 2.14.9 in \cite{vdVWell96} and of Lemma 3.2 in \cite{vdG00} for empirical processes, and from Lemma 2.4 in \cite{Neum2004}  which gives a version in expectation applicable to degenerate two-sample $U$-processes of arbitrary degree indexed by $L_p$-dense classes of kernels.

\section{Technical Proofs}
The proofs of the results stated in the paper are detailed below.
\subsection{Proof of Proposition \ref{prop:hajek}} \label{pf:hajek}

Let $\theta_0\in (0,1)$. Since $\phi(u) \in \mathcal{C}^2([0,1], \RR)$ by virtue of Assumption \ref{hyp:phic2}, a Taylor expansion of order two yields: for all $\theta\in (0,1)$

\begin{equation}\label{eqprop:taylorphi}
	\phi(\theta) = \phi(\theta_0) + (\theta-\theta_0)\phi'(\theta_0) +  \int_{\theta_0}^{\theta} (\theta - u) \phi''(u)du~.
\end{equation}

\noindent Let $s \in \S_0$. For all $t\in \RR$, we have

\begin{multline}\label{eqprop:taylorphirank}
\phi \left(  \frac{N\widehat{F}_{s,N}(t)}{N+1} \right) = \phi \circ \bF_s(t) + \left(\frac{N\widehat{F}_{s,N}(t)}{N+1}-\bF_s(t)\right)\phi' \circ \bF_s(t) \\
+\int_{\bF_s(t)}^{N\widehat{F}_{s,N}(t)/(N+1)} \left(\frac{N\widehat{F}_{s,N}(t)}{N+1}-u\right) \phi''(u)du~, 
\end{multline}
\noindent with probability one. Let $i \leq n$, for $t=s(\bX_i)$, \eqref{eqprop:taylorphirank} writes: 

\begin{multline}\label{eq:proptaylphi}
\phi \left(  \frac{N\widehat{F}_{s,N}(s(\bX_i))}{N+1} \right) = \phi \circ \bF_s(s(\bX_i)) \\
+ \left(\frac{N\widehat{F}_{s,N}(s(\bX_i))}{N+1}-\bF_s(s(\bX_i))\right)\phi' \circ \bF_s(s(\bX_i)) + t_i(s) \quad a.s.~,
\end{multline}
where $$\vert t_i(s) \vert\leq (\Vert \phi''\Vert_{\infty} /2)\left(N/(N+1) \widehat{F}_{s,N}(s(\bX_i))-\bF_s(s(\bX_i))\right)^2.$$
Hence, by summing over $i\in\{1,\; \ldots,\; n\}$, one gets that the approximation of $\widehat{W}_{n,m}(s)$ stated below holds true almost-surely:

\begin{equation}
\widehat{W}_{n,m}(s)= n \widehat{W}_{\phi}(s) + B_{n,m}(s) + \widehat{T}_{n,m}(s)~,
\end{equation}

\noindent where
\begin{eqnarray}\label{eqprop:defBT}
	B_{n,m}(s) &=& \sum_{i=1}^{n} \left(\frac{N\widehat{F}_{s,N}(s(\bX_i))}{N+1}-\bF_s(s(\bX_i))\right)\phi' \circ \bF_s(s(\bX_i)),\\
	\vert \widehat{T}_{n,m}(s) \vert &=&\sum_{i=1}^{n} \vert t_i(s) \vert  \le  \frac{\Vert \phi''\Vert_{\infty} }{2}\sum_{i=1}^{n} \left(\frac{N\widehat{F}_{s,N}(s(\bX_i))}{N+1}-\bF_s(s(\bX_i))\right)^2~. 
\end{eqnarray}

\paragraph{Linearization of $B_{n,m}(\cdot)$.}
First, observe that
\begin{multline}\label{eqB}
B_{n,m}(s)
=  \frac{1}{N+1} \sum_{i=1}^{n} \sum_{j\neq i}^n \mathbb{I} \{ s(\bX_j)\leq s(\bX_i) \} \phi' \circ \bF_s(s(\bX_i))  \\
+ \frac{1}{N+1} \sum_{i=1}^{n} \sum_{j=1}^{m}  \mathbb{I} \{ s(\bY_j)\leq s(\bX_i)  \} \phi' \circ \bF_s(s(\bX_i)) \\
+  \sum_{i=1}^n \left(\frac{1}{N+1}  - F_{s}(s(\bX_i))  \right)\phi' \circ \bF_s(s(\bX_i))~.
\end{multline}
Notice that the first two terms are $U$-processes indexed by $\S_0$, \textit{cf}  Section \ref{app:subUstatproc}, while the last term is an empirical process. Indeed, one may write

\begin{equation}\label{eq:UdecomB} 
B_{n,m}(s) = \frac{n(n-1) }{N+1} U_n(k_s)  + \frac{nm }{N+1} U_{n,m}(\ell_s)  + \widehat{K}_{n,m}(s)~,
\end{equation}
where
\begin{equation}\label{eq:uproconessample}
	U_n(k_s)  = \frac{1}{n(n-1)} \sum_{i=1}^{n} \sum_{j\neq i}^n \mathbb{I}\{s(\bX_j)\leq s(\bX_i)\}  \phi' \circ \bF_s(s(\bX_i))
\end{equation}
is a (nondegenerate) $1$-sample $U$-process of degree $2$ based on the random sample  $\{ \bX_1,\; \ldots,\; \bX_n  \}$ with nonsymmetric kernel $k_s (x,x') = \mathbb{I}\{s(x')\leq s(x)\} \phi' \circ \bF_s(s(x))$ on $\X \times \X$,
\begin{equation}\label{eq:uproctwossample}
U_{n,m}(\ell_s) = \frac{1}{nm} \sum_{i=1}^n \sum_{j=1}^m  \mathbb{I}\{s(\bY_j)\leq s(\bX_i)\}  \phi' \circ \bF_s(s(\bX_i))
\end{equation}
is a (nondegenerate) two-sample $U$-process of degree $(1,1)$ based on the samples $\{ \bX_1,\; \ldots,\; \bX_n  \}$ and $\{ \bY_1,\; \ldots,\; \bY_m  \}$ with kernel $\ell_s (x,y) =\mathbb{I}\{s(y)\leq s(x)\}  \phi' \circ \bF_s(s(x))$ on $\X \times \Y$, and 
\begin{equation*}
\widehat{K}_{n,m}(s) = \sum_{i=1}^n \left( \frac{1}{N+1}- F_{s}(s(\bX_i))  \right)\phi' \circ \bF_s(s(\bX_i))
\end{equation*}
is an empirical process based on the $\bX_i$'s.
In order to write $B_{n,m}$ as an empirical process plus a (negligible) remainder term, the Hoeffding decomposition is applied to the $U$-processes above, \textit{cf} Appendix \ref{app:subUstatproc}: 
\begin{eqnarray}
	U_n(k_s)  &=& \mathbb{E}[U_n(k_s)  ]+ \widehat{U}_n(k_s) +  \mathcal{R}_{n}(k_s)~, \label{eq:uprocdefhajek1}\\
	\label{eq:uprocdefhajek2}
	U_{n,m}(\ell_s) &=& \mathbb{E}[U_{n,m}(\ell_s) ] + \widehat{U}_{n,m}(\ell_s)  +  \mathcal{R}_{n,m}(\ell_s)~,
\end{eqnarray}
where
\begin{equation}\label{eq:udecomphaj1}
	\widehat{U}_n(k_s)   = \frac{1}{n} \sum_{i =1}^n k_{s,1,1}(\bX_i) + \frac{1}{n} \sum_{i =1}^n  k_{s,1,2}(\bX_i)~,
	\end{equation}
	with $k_{s,1,1} (x)=\mathbb{E}[k_s(x,\bX)]-\mathbb{E}[U_n(k_s) ]$ and $k_{s,1,2} (x)=\mathbb{E}[k_s(\bX,x)]-\mathbb{E}[U_n(k_s) ]$, and 
\begin{equation}\label{eq:udecomphaj2}
	\widehat{U}_{n,m}(\ell_s)   =\frac{1}{m}  \sum_{j =1}^m \ell_{s,1,1}(\bY_j) +\frac{1}{n}  \sum_{i =1}^n  \ell_{s,1,2}(\bX_i)~,
	\end{equation}
	with $\ell_{s,1,1} (y)=\mathbb{E}[\ell_s(\bX,y)]-\mathbb{E}[U_{n,m}(\ell_s) ] $ and $\ell_{s,1,2} (x)=\mathbb{E}[\ell_s(x,\bY)]-\mathbb{E}[U_{n,m}(\ell_s) ] $.


 

\noindent Consequently, the H\'ajek projection of the process $B_{n,m}(s)$ is given by

\begin{equation}\label{eq:Blinear}
\widehat{B}_{n,m}(s) - \mathbb{E}[\widehat{B}_{n,m}(s) ]  = \frac{n(n-1) }{N+1} \widehat{U}_n(k_s)  +  \frac{nm }{N+1}  \widehat{U}_{n,m}(\ell_s)  + \widehat{K}_{n,m}(s) - \mathbb{E}[\widehat{K}_{n,m}(s)]~.
\end{equation}



\noindent The following result provides an approximation of \eqref{eq:Blinear} and is proved in Appendix \ref{pflemma:HajekB}.

\begin{lemma}\label{lemma:HajekB}   
	Under Assumptions \ref{hyp:sabscont}-\ref{hyp:VC}, the H\'ajek projection of the stochastic process $B_{n,m}(\cdot)$, denoted by $\widehat{B}_{n,m}(\cdot)$ and indexed by $\S_0$, onto the subspace generated by the random variables $\bX_1,\; \ldots,\; \bX_n$ and $\bY_1,\; \ldots,\; \bY_m$ can be approximated as follows: for all $s \in \S_0$,
	
	\begin{equation}\label{eq:lemhajekBP}
	\widehat{B}_{n,m} (s) -  \mathbb{E} \left[ \widehat{B}_{n,m}(s)  \right]= \widehat{V}_{n}^X(s) +  \widehat{V}_{m}^Y(s)  + \widehat{R}_{n,m}(s)~,
	\end{equation}	
	
\noindent	where $$\widehat{V}_{n}^X(s) = \frac{n-1}{N+1} \sum_{i=1}^n  k_{s,1,1}  (\bX_i), \; \widehat{V}_{m}^Y(s)  = \frac{n}{N+1}\sum_{j=1}^m  \ell_{s,1,1}  (\bY_j)~.$$ 
	Let $\delta>0$, there exist constants $A_1 >0, \; A_2 \geq 2$ depending on $\phi$ and $\V$ such that for all $A_4 \geq A_1$ and $A_3 = (1/4A_{4}A_{2})\log(1+A_{4}/(4A_{2}))$
	
	\begin{equation}\label{lemeq:unifremhajekB} 
\mathbb{P}\left\{	\sup_{s \in \S_0} \bigg| \widehat{R}_{n,m}(s)   \bigg| > t \right\}\leq A_2 \exp\left\{ -\frac{A_3 N t^2}{ p \sigma^2} \right\}~,
	\end{equation}
as soon as $2A_1 \sigma  \sqrt{p\log(2 \| \phi'\|_{\infty}  /\sigma)/N} \leq t \leq 2pA_4  \| \phi'\|_{\infty}$, with $ \sigma^2 = \int_{[0,1]} \phi'^2$. 
	
\end{lemma}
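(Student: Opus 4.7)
Starting from the decomposition \eqref{eq:UdecomB}, which writes $B_{n,m}(s)$ as a combination of the one-sample $U$-process $U_n(k_s)$, the two-sample $U$-process $U_{n,m}(\ell_s)$, and the empirical process $\widehat{K}_{n,m}(s)$, the plan is to compute the H\'ajek projection of $B_{n,m}(s)$ via the Hoeffding decompositions \eqref{eq:uprocdefhajek1} and \eqref{eq:uprocdefhajek2}, and then to bound the corresponding residual by a uniform empirical process inequality. Applying these decompositions yields \eqref{eq:Blinear}, expressing $\widehat{B}_{n,m}(s) - \mathbb{E}[\widehat{B}_{n,m}(s)]$ as a sum of four i.i.d.\ linear pieces in the $\bX_i$'s and $\bY_j$'s, plus the centered empirical process $\widehat{K}_{n,m}(s) - \mathbb{E}[\widehat{K}_{n,m}(s)]$. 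I would then compute the first-order projections of $k_s$ and $\ell_s$ using $\mathbb{E}[g(s(\bX))\,\mathbb{I}\{s(\bX) \geq t\}] = \int_t^{+\infty} g\, dG_s$ and its analogue for $\bY$, thereby isolating the two linear pieces involving $\int_{s(\bX_i)}^{+\infty}(\phi'\circ F_s)\, dG_s$ and $\int_{s(\bY_j)}^{+\infty}(\phi'\circ F_s)\, dG_s$, which are precisely $\widehat{V}_n^X(s)$ and $\widehat{V}_m^Y(s)$.

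After removing those two main pieces, the residual $\widehat{R}_{n,m}(s)$ gathers two leftover linear projections in the $\bX_i$'s, namely averages of $(\phi'\circ F_s)(s(\bX_i))\,G_s(s(\bX_i))$ and $(\phi'\circ F_s)(s(\bX_i))\,H_s(s(\bX_i))$, together with $\widehat{K}_{n,m}(s) - \mathbb{E}[\widehat{K}_{n,m}(s)]$. The key algebraic observation is that, on grouping the coefficients in front of $(\phi'\circ F_s)(s(\bX_i))$ and invoking the identity $F_s = pG_s + (1-p)H_s$ together with $(n-1)/(N+1) = p + O(1/N)$ and $m/(N+1) = (1-p) + O(1/N)$, these three contributions collapse into a single i.i.d.\ average $\sum_{i=1}^n \eta_s(\bX_i)$ whose summands are centered, bounded in magnitude by $O(\Vert \phi'\Vert_{\infty}/N)$, and of variance bounded by $\sigma^2/N^2$ up to constants (with $\sigma^2 = \int_0^1 \phi'^2$, obtained through the change of variable $u = F_s(t)$). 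The total variance of $\widehat{R}_{n,m}(s)$ is thus of order $p\sigma^2/N$, which matches the exponent $A_3 N t^2/(p \sigma^2)$ appearing in \eqref{lemeq:unifremhajekB}.

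To conclude, I would apply a Bernstein/Talagrand-type concentration inequality for bounded empirical processes indexed by VC-type function classes. The class $\{\eta_s : s \in \S_0\}$ inherits the VC-type property from Assumption \ref{hyp:VC} via the permanence properties recalled in Lemma \ref{lem:permdeg} (stability under composition with monotone functions and under conditional expectations), combined with the smoothness of $F_s$ (Assumption \ref{hyp:sabscont}) and the boundedness of $\phi'$, $\phi''$ (Assumption \ref{hyp:phic2}); the expected supremum of the associated empirical process would then be controlled by a Dudley entropy integral of order $\sigma\sqrt{p\mathcal{V}\log(\Vert \phi'\Vert_\infty/\sigma)/N}$, which fixes the lower threshold imposed on $t$. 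The main obstacle is the cancellation argument of the previous paragraph: treating each linear piece of $\widehat{R}_{n,m}(s)$ separately would yield a naive bound of order $O_{\mathbb{P}}(\sqrt{n})$, vastly larger than what is claimed; exploiting the identity $F_s = pG_s + (1-p)H_s$ to collapse the three leftover linear contributions into an i.i.d.\ sum with per-term magnitude $O(1/N)$ is what produces the optimal fluctuation $O_{\mathbb{P}}(\sqrt{p/N})$, and carefully tracking the $O(1/N)$ error terms arising from replacing $n/(N+1)$ by $p$ and from the $1/(N+1)$ correction in the definition of $\widehat{K}_{n,m}$ is the technical heart of the argument.
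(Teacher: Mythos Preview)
Your proposal is correct and follows essentially the same route as the paper: the paper also uses the Hoeffding decompositions \eqref{eq:uprocdefhajek1}--\eqref{eq:uprocdefhajek2} to isolate $\widehat V_n^X$ and $\widehat V_m^Y$, then exploits exactly the cancellation you describe (the identity $F_s=pG_s+(1-p)H_s$ together with $|(n-1)/(N+1)-p|,\,|m/(N+1)-(1-p)|=O(1/N)$) to reduce $\widehat R_{n,m}(s)$ to empirical sums in the $\bX_i$'s with $O(1/N)$ prefactors, and concludes via the Talagrand-type concentration bound of Gin\'e--Guillou (Theorem~2.1 in \cite{GG02}) after invoking the VC-type permanence of Lemma~\ref{lem:permkerneldeg}. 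The only cosmetic difference is that the paper keeps the residual as two separate empirical processes (with prefactors $n/(N+1)-p$ and $m/(N+1)-(1-p)$) and applies a union bound, rather than collapsing everything into a single sum $\sum_i\eta_s(\bX_i)$ as you propose, but this is immaterial.
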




The last step relies on all previous decompositions, so as to approximate $B_{n,m}(\cdot)$ by the sum of two empirical processes $\widehat{V}_{n}^X(\cdot) $ and $\widehat{V}_{n}^Y(\cdot)$, with a uniform control of the error. All residual terms, $\widehat{R}_{n,m}(s)$ (Lemma \ref{lemma:HajekB}) plus the remainders of the $U$-processes, are the components of the process $\mathcal{R}_{n,m}^B(s)$ , see the following Lemma \ref{lemma:hajekrem}.

\begin{lemma}\label{lemma:hajekrem} Suppose that Assumptions \ref{hyp:sabscont}-\ref{hyp:VC} are fulfilled. The stochastic process $B_{n,m} (.)$ can be approximated as follows: for all $s \in \S_0$,
	\begin{equation}\label{eq:decomplemmhajrem}
	B_{n,m} (s) -  \mathbb{E} \left[ B_{n,m}(s)  \right]= \widehat{V}_{n}^X(s) + \widehat{V}_{m}^Y(s) + \mathcal{R}_{n,m}^B(s)~.
	\end{equation}

\noindent 	Let $\delta >0$. There exist $D_1>0$ universal constant, and constants $D_3, \; D_4>0, \; D_2 \geq 1$, $d_1, \; d_2>3$ depending on $\phi$ and $\V$, such that with probability at least $1-\delta$:
	
	\begin{equation}\label{eqn:supBR} 
\sup_{s \in \S_0} \vert \mathcal{R}_{n,m}^B(s)  \vert \leq   \| \phi'\|_{\infty}  \sqrt{ p(1-p)   D_1\log(d_1/ \delta)}+ ( p \| \phi'\|_{\infty}D_4)  \log(d_2/\delta)~,
	\end{equation}
 as soon as $N\geq (pD_3)^{-1}\log(D_2/\delta)$.
\end{lemma}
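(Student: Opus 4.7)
The plan is to reconstruct $B_{n,m}(s)-\mathbb{E}[B_{n,m}(s)]$ from the Hoeffding decompositions \eqref{eq:uprocdefhajek1}--\eqref{eq:uprocdefhajek2} of the two $U$-processes appearing in \eqref{eq:UdecomB}, recognize the H\'ajek projection treated by Lemma \ref{lemma:HajekB}, and lump the two degenerate residual $U$-process pieces into the remainder $\mathcal{R}^B_{n,m}(s)$. Centering \eqref{eq:UdecomB} and substituting the Hoeffding expansions, the H\'ajek-projection parts of the two $U$-processes together with $\widehat{K}_{n,m}-\mathbb{E}[\widehat{K}_{n,m}]$ reconstruct exactly $\widehat{B}_{n,m}(s)-\mathbb{E}[\widehat{B}_{n,m}(s)]$, which by Lemma \ref{lemma:HajekB} equals $\widehat{V}_n^X(s)+\widehat{V}_m^Y(s)+\widehat{R}_{n,m}(s)$. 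Consequently,
\begin{equation*}
\mathcal{R}^B_{n,m}(s)\;=\;\widehat{R}_{n,m}(s)\;+\;\frac{n(n-1)}{N+1}\,\mathcal{R}_n(k_s)\;+\;\frac{nm}{N+1}\,\mathcal{R}_{n,m}(\ell_s)~,
\end{equation*}
so what remains is a uniform tail bound for the two degenerate $U$-process contributions.

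The kernel classes $\{k_s:s\in\S_0\}$ and $\{\ell_s:s\in\S_0\}$ are uniformly bounded by $\|\phi'\|_\infty$, and their {\sc VC}-type character follows from Assumption \ref{hyp:VC} combined with closure under products with the bounded composition $\phi'\circ F_s$; Lemma \ref{lem:permdeg} then transfers the {\sc VC}-type property to the classes of degenerate Hoeffding residuals. Applying Lemma \ref{thm:major2006} to $\sup_{s\in\S_0}|\mathcal{R}_n(k_s)|$ delivers a Bernstein-type exponential tail; inverting at level $\delta/3$ and multiplying by $n(n-1)/(N+1)\sim p^2 N$ (using $n-1\sim pN$) contributes a deviation of order $p\,\|\phi'\|_\infty\log(1/\delta)$. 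Similarly, Lemma \ref{lem:devbound2} applied to $\sup_{s\in\S_0}|\mathcal{R}_{n,m}(\ell_s)|$ produces a sub-Gaussian tail; inverting and multiplying by $nm/(N+1)\sim p(1-p)N$ gives a deviation of order $\|\phi'\|_\infty\sqrt{p(1-p)\log(1/\delta)}$. The two summands of \eqref{eqn:supBR} are exactly these contributions; the $\widehat{R}_{n,m}(s)$ term, controlled through \eqref{lemeq:unifremhajekB} with probability at least $1-\delta/3$, is of order $\|\phi'\|_\infty \sqrt{p\log(1/\delta)/N}$ and is therefore absorbed into the stated bound up to a constant.

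A final union bound over the three events, each of probability at least $1-\delta/3$, yields \eqref{eqn:supBR}, the factor $3$ being absorbed into the constants $d_1,d_2>3$. The sample-size requirement $N\geq (pD_3)^{-1}\log(D_2/\delta)$ is obtained as the maximum of the three individual thresholds, all of the form $N\geq c\,p^{-1}\log(c'/\delta)$. The main obstacle is the careful bookkeeping around the admissibility windows in Lemmas \ref{thm:major2006} and \ref{lem:devbound2}: both tail inequalities are valid only when the deviation parameter $t$ lies in a prescribed range depending on $(\sigma,L,n,m)$, so one must verify that the quantile chosen at level $\delta/3$ does fall inside these windows precisely when $N$ meets the stated lower threshold; the choice of constants $D_i,d_i$ is then dictated by this matching. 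Beyond this constant-tracking, the proof is a direct assembly of previously established tools.
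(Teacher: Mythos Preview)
Your proposal is correct and follows essentially the same route as the paper: the paper likewise writes $\mathcal{R}^B_{n,m}(s)$ as the sum of $\widehat{R}_{n,m}(s)$ (controlled via Lemma \ref{lemma:HajekB}), the scaled degenerate one-sample $U$-process $\frac{n(n-1)}{N+1}\mathcal{R}_n(k_s)$ (handled by Lemma \ref{thm:major2006}), and the scaled degenerate two-sample $U$-process $\frac{nm}{N+1}\mathcal{R}_{n,m}(\ell_s)$ (handled by Lemma \ref{lem:devbound2}), then takes a three-way union bound and matches the admissibility windows to obtain the $N\geq (pD_3)^{-1}\log(D_2/\delta)$ threshold. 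Your identification of the orders of the three pieces and the absorption of $\widehat{R}_{n,m}$ into the dominant terms mirror the paper's argument exactly.
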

\noindent Refer to Appendix \ref{pflemma:hajekrem} for the detailed proof. 

\paragraph{A uniform bound for $\widehat{T}_{n,m}(\cdot)$.} By virtue of \eqref{eqprop:defBT}, we have:

\begin{equation}\label{eq1}
	 \sup_{s\in \S_0} \vert \widehat{T}_{n,m}(s) \vert
	\leq  n\Vert \phi''\Vert_{\infty} \left( \sup_{(s,t) \in \S_0\times \mathbb{R} }  \left(\widehat{F}_{s,N}(t) -\bF_s(t) \right)^2 
	+  \frac{ 1}{(N+1)^2}\right).
\end{equation}

\noindent Observe also that
\begin{multline}\label{eq2}
\sup_{(s,t )\in \S_0 \times \RR } \vert \widehat{F}_{s,N}(t) -\bF_s(t)\vert \leq p \sup_{(s,t )\in \S_0 \times \RR } \vert \widehat{G}_{s,n}(t) -G_s(t)\vert  \\+(1-p)\sup_{(s,t )\in \S_0 \times \RR } \vert \widehat{H}_{s,m}(t) -H_s(t)\vert+ \frac{2}{N}~.
\end{multline}

\noindent A classic concentration bound for empirical processes based on the {\sc VC} inequality (see $\eg$ Theorems 3.2 and 3.4 in \cite{BBL05}) shows that, for any $\delta \in (0,1)$, we have with probability at least $1-\delta$:
\begin{equation*}
	\sup_{(s,t )\in \S_0 \times \RR } \vert \widehat{G}_{s,n}(t) -G_s(t)\vert \leq c\sqrt{\frac{\V}{n}}+\sqrt{\frac{2\log(1/\delta)}{n}}~, 
\end{equation*}
 where $c>0$ is a universal constant. In a similar fashion, we have, with probability larger than $1-\delta$,
 
 \begin{equation*}
 	\sup_{(s,t )\in \S_0 \times \RR } \vert \widehat{H}_{s,m}(t) -H_s(t)\vert \leq c\sqrt{\frac{\V}{m}}+\sqrt{\frac{2\log(1/\delta)}{m}}~.
 \end{equation*}


\noindent Combining the bounds above with the union bound, \eqref{eq2} and \eqref{eq1} we obtain that, for any $\delta\in (0,1)$, we have with probability larger than $1-\delta$:

\begin{multline}\label{eqproof:remTbound}
	\sup_{s\in \S_0} \vert \widehat{T}_{n,m}(s) \vert \leq n\vert\vert \phi''\vert\vert_{\infty}\left(  12\left( \frac{c^2\V+\log(2/\delta)}{N}+\frac{1}{N^2}\right)+\frac{1}{(N+1)^2}\right)\\
	\leq B_1+B_2\log(2/\delta)~,
\end{multline}
where $B_1$ (\resp $B_2$) is a constant that only depends on $\phi$ and $\V$ (\resp on $\phi$).







\noindent To end the proof, it suffices to observe that the remainder process is the sum of $ \mathcal{R}_{n,m}^B(s)$ and $\widehat{T}_{n,m}(s)$. Combining bounds \eqref{eqn:supBR} and \eqref{eqproof:remTbound}, we get that, with probability at least $1 - \delta$,

\begin{equation}
 \sup_{s\in \S_0} \vert  \mathcal{R}_{n,m}(s) \vert  = \sup_{s\in \S_0} \vert  \mathcal{R}_{n,m}^B(s) + \widehat{T}_{n,m}(s) \vert 
 \leq B_1 +  \| \phi'\|_{\infty} \kappa_p D  \log(2d/\delta) + B_2\log(4/\delta)
 \end{equation}

\noindent as soon as $N\geq (pD_3)^{-1} \log(D_2/\delta)$, with $D = \max(\sqrt{D_1},D_4)$, $d=\max(d_1,d_2)$, $\kappa_p = \max(\sqrt{p(1-p)},p)$. As $B_2>1$ , $d\geq3$,  and for small $\delta$, we obtain the upperbound $B_1 + ( \| \phi'\|_{\infty} \kappa_p D   + B_2)\log(2d/\delta)$. 




\subsection{Intermediary Results}\label{pf:auxlemm}



The intermediary results involved in Section \ref{pf:hajek} are now established.

\subsubsection{Permanence Properties}\label{pf:auxlemmperm}

The lemmas below claim that the collections of kernels/functions involved in the decomposition obtained in Appendix \ref{pf:hajek} are of \vc-type and uniformly bounded.

\begin{lemma}\label{lem:permkernel}
Suppose that Assumptions \ref{hyp:phic2} and \ref{hyp:VC} are fulfilled. Then, the collections of kernels $\{k_s(x,x'):\; s\in \S_0\}$ and $\{ \ell_s(x,y):\; s\in \S_0 \}$ and are bounded {\sc VC}-type classes of functions with parameters fully determined by $\V$ and $\phi$.

\end{lemma}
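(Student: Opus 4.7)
The plan is to prove both statements simultaneously by writing each kernel as a product of two bounded factors, controlling each factor separately, and then invoking standard permanence properties for VC-type families of functions (see Section 2.6 of \cite{vdVWell96}). Boundedness is immediate: under Assumption \ref{hyp:phic2}, $\phi$ is $\mathcal{C}^2$ on the compact interval $[0,1]$, so $\|\phi'\|_\infty < +\infty$; combined with the indicator bounded by $1$, this yields $|k_s(x,x')|, |\ell_s(x,y)| \leq \|\phi'\|_\infty$ uniformly in $s\in \S_0$. Only the covering-number bound is then at stake.

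For the indicator factor $(x,x') \mapsto \mathbb{I}\{s(x')\leq s(x)\}$, I will first note that the two pullbacks $\{(x,x')\mapsto s(x) : s\in \S_0\}$ and $\{(x,x')\mapsto s(x') : s\in \S_0\}$ are VC-subgraph classes of dimension at most $\V$ (any shattering in the lifted space projects down to a shattering in the original space). Their difference $\{(x,x')\mapsto s(x)-s(x'): s\in \S_0\}$ is then VC-type with parameters controlled by $(A,\V)$ by the standard difference-of-classes rule. Taking the indicator of the non-negative part is a bounded, monotone composition that preserves VC-type parameters up to universal constants, so the indicator factor is VC-type with parameters depending only on $(A,\V)$. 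The same argument, applied with $y$ in place of the second $x$-variable, handles the corresponding factor of $\ell_s$.

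For the smoothed-rank factor $x\mapsto \phi'(F_s(s(x)))$, I will exploit the identity
\begin{equation*}
F_s(s(x)) \;=\; \int \mathbb{I}\{s(z)\leq s(x)\}\, F(dz),
\end{equation*}
which represents $F_s\circ s$ as the $z$-marginal of the two-variable indicator class already shown to be VC-type. By the Jensen-based argument used in the proof of Lemma \ref{lem:permdeg} (integrating out one variable cannot increase $L_2$-covering numbers), $\{x\mapsto F_s(s(x)): s\in \S_0\}$ is VC-type with parameters controlled by $(A,\V)$ and takes values in $[0,1]$. Since $\phi'$ is $\|\phi''\|_\infty$-Lipschitz on $[0,1]$ by Assumption \ref{hyp:phic2}, post-composition by $\phi'$ satisfies $\|\phi'\circ f - \phi'\circ g\|_{L_2(Q)} \leq \|\phi''\|_\infty \|f-g\|_{L_2(Q)}$, and a routine rescaling of the covering radius shows that the VC-type parameters are preserved up to factors depending only on $\|\phi'\|_\infty$ and $\|\phi''\|_\infty$.

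Finally, the product of two bounded VC-type classes is bounded VC-type: for $f_i, g_i$ with $\|f_i\|_\infty, \|g_i\|_\infty \leq L_i$, one has $\|f_1 f_2 - g_1 g_2\|_{L_2(Q)} \leq L_2\|f_1-g_1\|_{L_2(Q)} + L_1\|f_2-g_2\|_{L_2(Q)}$, so concatenating an $\varepsilon/2L_2$-cover of the first factor with an $\varepsilon/2L_1$-cover of the second produces an $\varepsilon$-cover of the product. Combining the indicator factor and the smoothed-rank factor via this rule yields the desired VC-type bound for both $\{k_s\}$ and $\{\ell_s\}$ with parameters fully determined by $\V$ and $\phi$. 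The step I expect to require the most care is the smoothed-rank factor, since $F_s$ itself depends on $s$ so that $\phi'\circ F_s\circ s$ is not a fixed Lipschitz composition applied to an element of $\S_0$; rewriting $F_s\circ s$ as an expectation and invoking the Jensen-type permanence of Lemma \ref{lem:permdeg} is precisely what circumvents this difficulty.
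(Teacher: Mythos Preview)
Your proof is correct and follows essentially the same route as the paper: factor each kernel as an indicator times $\phi'\circ F_s\circ s$, control the indicator via pullback/difference/threshold permanence, control the smoothed-rank factor by rewriting $F_s\circ s$ as an expectation of the indicator class and invoking the Jensen-type argument of Lemma~\ref{lem:permdeg} followed by Lipschitz composition with $\phi'$, and finish with the bounded-product rule. Your write-up is in fact slightly more explicit than the paper's in spelling out the product-covering inequality and in correctly integrating against $F$ (rather than $G$) when representing $F_s(s(x))$.
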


\begin{proof}
Recall that: $\forall (x,x')\in \X^2$,
$$
k_s(x,x')=\mathbb{I}\{s(x')\leq s(x)\}  \left(\phi' \circ \bF_s\right)(s(x)).
$$
Hence, we have $\sup_{(x,x')\in \X^2}\vert k_s(x,x')\vert \leq \vert\vert \phi' \vert\vert_{\infty}$ for all $s\in \S_0$.
In additions, since the collections $\{(x,x')\in \X^2\mapsto s(x):\; s\in \S_0\}$ and $\{(x,x')\in \X^2\mapsto s(x'):\; s\in \S_0\}$ are {\sc VC} classes of functions, classic permanence properties of {\sc VC} classes of functions (see $\eg$ Lemma 2.6.18) shows that $\{(x,x')\in \X^2\mapsto s(x)-s(x'):\; s\in \S_0\}$ is also a {\sc VC} class, as well as the class of indicator functions $\{(x,x')\in \X^2\mapsto \mathbb{I}\{s(x')\leq s(x)\} :\; s\in \S_0\}$. Consequently, the argument of Lemma \ref{lem:permdeg}'s proof permits to see easily that 
$\{(x,x')\in \X^2\mapsto \bF_s(s(x))=\mathbb{E}[\mathbb{I}\{s(X)\leq s(x)\}]:\; s\in \S_0\}$ is of {\sc VC} type, just like $\{(x,x')\in \X^2\mapsto (\phi' \circ \bF_s)(s(x)):\; s\in \S_0\}$ using the Lipschitz property of $\phi'$, \textit{cf} Assumption \ref{hyp:phic2}. Finally, being composed of products of a function in the bounded {\sc VC}-type class $\{(x,x')\in \X^2\mapsto \mathbb{I}\{s(x')\leq s(x)\} :\; s\in \S_0\}$ by a function in the bounded {\sc VC}-type class $\{(x,x')\in \X^2\mapsto (\phi' \circ \bF_s)(s(x)):\; s\in \S_0\}$, the collection $\{k_s:\; s\in \S_0\}$  is still a bounded {\sc VC}-type class of functions. A similar reasoning can be applied to show that $\{\ell_s:\; s\in \S_0\}$ is a bounded {\sc VC}-type class of kernels on $\X\times \Y$.
\end{proof}

\noindent The following result is straightforwardly deduced from the lemma above combined with Lemma \ref{lem:permdeg}.
\begin{lemma}\label{lem:permkerneldeg}
Suppose that Assumptions \ref{hyp:phic2} and \ref{hyp:VC} are fulfilled. Then, the collections of functions/kernels $\{k_{s,1,1}(x):\; s\in \S_0\}$, $\{k_{s,1,2}(x):\; s\in \S_0\}$, $\{k_{s}(x,x')-k_{s,1,1}(x)-k_{s,1,2}(x'):\; s\in \S_0\}$, $\{ \ell_{s,1,1}(y):\; s\in \S_0 \}$, $\{ \ell_{s,1,2}(x):\; s\in \S_0 \}$ and $\{ \ell_s(x,y)-\ell_{s,1,1}(y)-\ell_{s,1,2}(x):\; s\in \S_0 \}$ are bounded {\sc VC}-type classes with parameters fully determined by $\V$ and $\phi$.
\end{lemma}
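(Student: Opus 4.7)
My plan is to combine Lemma \ref{lem:permkernel} and Lemma \ref{lem:permdeg}, with one small cover-counting step to absorb the recentering constants $\mathbb{E}[U_n(k_s)]$ and $\mathbb{E}[U_{n,m}(\ell_s)]$ that appear in the definitions of the H\'ajek projection components. Lemma \ref{lem:permkernel} already delivers that $\{k_s\}_{s \in \S_0}$ and $\{\ell_s\}_{s \in \S_0}$ are bounded {\sc VC}-type with envelope $\vert\vert \phi' \vert\vert_{\infty}$ and parameters determined by $\V$ and $\phi$. Applying Lemma \ref{lem:permdeg} directly to $\{\ell_s\}$ then gives that the three classes $\{y \mapsto \mathbb{E}[\ell_s(\bX, y)]\}$, $\{x \mapsto \mathbb{E}[\ell_s(x, \bY)]\}$ and $\{(x,y) \mapsto \ell_s(x,y) - \mathbb{E}[\ell_s(\bX, y)] - \mathbb{E}[\ell_s(x, \bY)]\}$ are bounded {\sc VC}-type with parameters controlled by $\V$ and $\phi$.

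For the one-sample kernel $k_s$, I would observe that the proof of Lemma \ref{lem:permdeg} is symmetric in its two arguments and transfers verbatim to the case $\X = \Y$, $\mu = \nu$ by working with an independent copy $\bX'$ of $\bX$ and the product measure $P \otimes \mu$. This yields that $\{x \mapsto \mathbb{E}[k_s(x, \bX')]\}$, $\{x' \mapsto \mathbb{E}[k_s(\bX, x')]\}$ and the corresponding doubly-centered kernel class $\{(x,x') \mapsto k_s(x,x') - \mathbb{E}[k_s(x, \bX')] - \mathbb{E}[k_s(\bX, x')]\}$ are bounded {\sc VC}-type with parameters determined by $\V$ and $\phi$.

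It then only remains to absorb the recentering. Unrolling definitions, $\ell_{s,1,1}(y) = \mathbb{E}[\ell_s(\bX, y)] - \mathbb{E}[U_{n,m}(\ell_s)]$, $\ell_{s,1,2}(x) = \mathbb{E}[\ell_s(x, \bY)] - \mathbb{E}[U_{n,m}(\ell_s)]$, and the two-sample degenerate remainder equals $\ell_s(x,y) - \mathbb{E}[\ell_s(\bX,y)] - \mathbb{E}[\ell_s(x,\bY)] + 2\mathbb{E}[U_{n,m}(\ell_s)]$, so each of the six target classes is obtained from a class already shown to be {\sc VC}-type by adding an $s$-dependent constant $c_s$ with $|c_s| \leq 2 \vert\vert \phi' \vert\vert_{\infty}$. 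For any bounded {\sc VC}-type class $\mathcal{F}$ with parameters $(A, \V)$ and any choice of constants $(c_s)_{s \in \S_0}$ with $|c_s| \leq C$, combining an $\varepsilon$-cover of $\mathcal{F}$ in $L_2(Q)$ with an $\varepsilon$-grid of the interval $[-C, C]$ yields a $2\varepsilon$-cover of the shifted class of cardinality at most $(A/\varepsilon)^{\V} \cdot \lceil 2C/\varepsilon + 1\rceil$, which is bounded by $(A'/\varepsilon)^{\V+1}$ for an appropriate $A'$ depending only on $A, \V$ and $C$. Hence the shifted class is again {\sc VC}-type with parameters determined by those of $\mathcal{F}$ and by $\vert\vert \phi' \vert\vert_{\infty}$, and the same reasoning handles $k_{s,1,1}$, $k_{s,1,2}$ and the one-sample degenerate remainder.

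The argument is essentially bookkeeping; the only mildly subtle step is the final cover-counting remark, but no real obstacle arises since all recentering constants are uniformly bounded by $\vert\vert \phi' \vert\vert_{\infty}$, and envelopes of all resulting classes remain controlled by a constant multiple of $\vert\vert \phi' \vert\vert_{\infty}$.
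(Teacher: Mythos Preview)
Your proposal is correct and follows essentially the same route as the paper, which simply states that the result is ``straightforwardly deduced from the lemma above combined with Lemma~\ref{lem:permdeg}.'' You are in fact more careful than the paper in making explicit the absorption of the $s$-dependent recentering constants; note that instead of the interval-grid argument you could alternatively observe that $c_s=\mathbb{E}[U_{n,m}(\ell_s)]=\mathbb{E}\big[\mathbb{E}[\ell_s(x,\bY)]\big]$ is itself obtained by one further integration of a function in a class already shown to be {\sc VC}-type, so the Jensen argument of Lemma~\ref{lem:permdeg} applies once more and closure of {\sc VC}-type classes under finite sums concludes without increasing the exponent by~$1$.
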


\subsubsection{Proof of Lemma \ref{lemma:HajekB} }\label{pflemma:HajekB}

For $s\in \S_0$, by adding the diagonal term, the empirical process can be written
\begin{equation}\label{eq:lemmremhajdecom}
\widehat{R}_{n,m}(s) = 
\left(\frac{n}{N+1} -p\right)\sum_{i=1}^n  k_{s,1,2}(\bX_i) +  \left( \frac{m}{N+1}-(1-p)\right)\sum_{i=1}^n  \ell_{s,1,2}(\bX_i) ~.
\end{equation}

We uniformly bound all three empirical processes in probability using classic concentration bounds, see \textit{e.g.} Theorem 2.1 in \cite{GG02}, as follows.  Assuming Assumptions \ref{hyp:phic2}-\ref{hyp:VC}, Lemma \ref{lem:permkerneldeg} states that each class of functions $\{k_{s,1,2}: s\in \S_0 \}$, $\{\ell_{s,1,2}: s\in \S_0 \}$ is uniformly bounded and {\sc VC}-type of parameters depending only on $\phi$ and on the {\sc VC} dimension $\V$. For the class $\{x \mapsto \phi' \circ \bF_s(s(x)) : \; s \in \S_0  \}$, the arguments are exposed in the proof of Lemma \ref{lem:permkernel}. The variance of the kernels can be bounded for all $s\in \S_0$, by $ \sigma^2 = \int_{[0,1]} \phi'^2$ and $\sigma^2 \leq  \vert\vert \phi' \vert\vert_{\infty}^2$ and notice that $\vert n/(N+1) - p \vert \leq 1/N$ and  $\vert m/(N+1) - (1-p) \vert \leq 1/N$. 
Let $t>0$, there exist a sequence of constants $A_{1,i} >0, A_{2,i} \geq 1$ depending on $\phi$ and $\V$, $i\in \{1,2\}$, such that for all $A_{4,i} \geq A_{1,i}  $ and and $A_{3,i} =(1/A_{4,i}A_{2,i})\log(1+A_{4,i}/(4A_{2,i}))>0$, the following inequalities hold true.
\begin{equation}
\mathbb{P}\left\{	\frac{1}{N} \sup_{s \in \S_0} \bigg| \sum_{i=1}^n  k_{s,1,2}(\bX_i)   \bigg| > t \right\}\leq A_{2,1} \exp\left\{ -\frac{A_{3,1}N  t^2}{p  \sigma^2} \right\}~,
\end{equation}
as soon as $A_{1,1} \sigma  \sqrt{p\log(2 \| \phi'\|_{\infty}  /\sigma)/N} \leq t \leq p A_{4,1}  \| \phi'\|_{\infty}$,
\begin{equation}
\mathbb{P}\left\{	\frac{1}{N} \sup_{s \in \S_0} \bigg| \sum_{i=1}^n  \ell_{s,1,2}(\bX_i)   \bigg| > t \right\}\leq A_{2,2} \exp\left\{ -\frac{A_{3,2}N  t^2}{ p \sigma^2} \right\}~,
\end{equation}
\noindent as soon as $A_{1,2}  \sigma  \sqrt{p\log(2 \| \phi'\|_{\infty}  /\sigma)/N} \leq t \leq p A_{4,2}  \| \phi'\|_{\infty}$. 
The union bound with threshold $t/2$ yields

\begin{equation}
\mathbb{P}\left\{	\sup_{s \in \S_0} \bigg| \widehat{R}_{n,m}(s)   \bigg| > t \right\}\leq A_2 \exp\left\{ -\frac{A_3 N t^2}{ p \sigma^2} \right\}~,
\end{equation}


\noindent as soon as $2A_1 \sigma  \sqrt{p\log(2 \| \phi'\|_{\infty}  /\sigma)/N} \leq t \leq 2 p A_4 \| \phi'\|_{\infty}$
, with $A_1 = \max(A_{1,1},$\\$A_{1,2})$, $A_2 = 2\max(A_{2,1},A_{2,2}) $, $A_4 = \min(A_{4,1},A_{4,2})$ $\st$  $A_4 \geq A_1$,   $A_3 = (1/4A_{4}A_{2})\log(1+A_{4}/(4A_{2}))$ $\st$ $A_3 =(1/4)\min (A_{3,1}, A_{3,2})$,  at the price of changing $A_2$.

\subsubsection{Proof of Lemma \ref{lemma:hajekrem}}\label{pflemma:hajekrem}



The remainder of the decomposition \eqref{lemma:hajekrem} is obtained by combining Eq. \eqref{eq:UdecomB}, \eqref{eq:Blinear} and yields, for all $s\in \S_0$


\begin{equation*}
\left\vert \mathcal{R}_{n,m}^B(s) \right\vert   \leq   \vert  \widehat{R}_{n,m}(s)  \vert +  p^2N  \vert \mathcal{R}_{n}(k_s)\vert + p(1-p) N \vert  \mathcal{R}_{n,m}(\ell_s) \vert ~.
\end{equation*}

Suppose Assumptions \ref{hyp:phic2}-\ref{hyp:VC} are fulfilled. The first process can be uniformly bounded on $\S_0$ as proved in Lemma \ref{lemma:HajekB}. For the two others, we apply the results of Lemmas \ref{thm:major2006} and \ref{lem:devbound2} as follows. The process $ \mathcal{R}_{n}(k_s)$ (\resp $ \mathcal{R}_{n,m}(\ell_s)$) is the residual term obtained by decomposing the $U$-process $U_n(k_s)$ (Eq. \eqref{eq:uprocdefhajek1}, \resp \eqref{eq:uprocdefhajek2})), for all $s\in \S_0$. By Lemma  \ref{lem:permkerneldeg}, its class of degenerate kernels $\{(x,x') \mapsto k_{s}(x,x')-k_{s,1,1}(x)-k_{s,1,2}(x'):\; s\in \S_0\}$ (\resp $\{ (x,y)\mapsto \ell_s(x,y)-\ell_{s,1,1}(y)-\ell_{s,1,2}(x):\; s\in \S_0 \}$) is uniformly bounded and {\sc VC}-type of parameters depending only on $\phi$ and on the {\sc VC} dimension $\V$. 
Notice that the three classes of functions have variances and envelopes which can be similarly bounded by $ \sigma^2 = \int_{[0,1]} \phi'^2 \leq \vert\vert \phi' \vert\vert_{\infty}^2$, up to a multiplicative constant for both residuals. 
Let $\delta>0$, there exist constants $A_1, B_1 >0, A_2, B_2 \geq 1, A_3, B_3> 0 $ depending on $\phi$ and $\V$ \st with probability at least $1-\delta$
\begin{equation}\label{eq:lem18hajek}
		\sup_{s \in \S_0} \bigg| \widehat{R}_{n,m}(s)   \bigg|  \leq  \| \phi'\|_{\infty}\sqrt{ \frac{p\log(A_2/\delta)}{A_3 N }}~,
\end{equation}

\noindent as soon as  $ N \geq (4pA_3A_4^2 )^{-1}\log(A_2/\delta) $. Also by Lemma \ref{thm:major2006}
\begin{equation}\label{eq:lem18onesample}
p^2 N \sup_{s \in \S_0} \vert \mathcal{R}_{n}(k_s) \vert \leq( p \| \phi'\|_{\infty}/B_3)  \log(B_2/\delta)~,
\end{equation}

\noindent when $N\geq (pB_3)^{-1} \log(B_2/\delta)   $.  
And, by Lemma \ref{lem:devbound2}, there exist constants $C_1>0$, $C_2>1$ depending on $\V, \; \phi$  and a universal constant $ C_3>0$ such that
\begin{equation}\label{eq:lem18twosample}
p(1-p) N \sup_{s \in \S_0}  \vert \mathcal{R}_{n,m}(\ell_s) \vert \leq  \| \phi'\|_{\infty}  \sqrt{ p(1-p)   C_3\log(C_2/\delta)}~,
\end{equation}

\noindent for  
$\log(C_2/\delta) \geq C_1(\| \phi'\|_{\infty}^2C_3)^{-1}$. 
 The union bound concludes by considering constants such that with probability at least $1-\delta$

\begin{equation}
  \sup_{s \in \S_0} \; \vert  \mathcal{R}_{n,m}^B(s) \vert \leq   \| \phi'\|_{\infty}  \sqrt{ p(1-p)   C_3\log(3C_2/\delta)}+ ( p \| \phi'\|_{\infty}/B_3)  \log(3B_2/\delta)~,
\end{equation}

 \noindent  
 as soon as $N\geq (pD_3)^{-1} \log(D_2/\delta)$, where $D_2 = 3 \max(A_2,B_2)$ and $D_3 = \min(4A_3A_4^2, B_3)$. 

\subsection{ Proof of Theorem \ref{thm:bound_twosample}}\label{pf:bound_twosample}

Observe, by virtue of Proposition \ref{prop:hajek} and for all $s \in \S_0$

\begin{multline*}
	 	\left\vert \frac{1}{n}  \widehat{W}^{\phi}_{n,m}(s)- W_{\phi}(s)\right\vert  
	 \leq  \frac{1}{n} \bigg|  \sum_{i=1}^{n} \phi\circ \bF_s(s(\bX_i)) -  \mathbb{E} [\phi\circ \bF_s(s(\bX))]  \bigg| \\
	  + \frac{1}{N} \bigg|  \sum_{i=1}^{n} k_{s,1,1}(\bX_i) \bigg|  + \frac{1}{N} \bigg|  \sum_{j=1}^{m} \ell_{s,1,1}(\bY_j) \bigg|  + \frac{1}{n} \bigg|   \mathcal{R}_{n,m}(s) \bigg| ~.
\end{multline*}
Under Assumptions \ref{hyp:phic2}-\ref{hyp:VC}, we sequentially provide uniform bounds in probability for all processes. The classes of kernels $\{x \mapsto k_{s,1,1}(x):\; s\in \S_0\}$ and $\{y \mapsto \ell_{s,1,1}(y):\; s\in \S_0 \}$, by Lemma \ref{lem:permkerneldeg}, are bounded and  {\sc VC}-type of parameters depending on $\phi$ and on the  {\sc VC} dimension $\V$ of $\S_0$. Their variance can be  bounded, for all $s\in \S_0$, by $ \sigma^2 = \int_{[0,1]} \phi'^2$ and $\sigma^2 \leq  \vert\vert \phi' \vert\vert_{\infty}^2$.  As well for the collection $\{x\mapsto \phi\circ \bF_s(s(x)): s \in \S_0   \}$ where the arguments are detailed in Lemma \ref{lem:permkernel} and of variance bounded by, for all $s\in \S_0$, by $ \Sigma^2 = \int_{[0,1]} \phi^2$ and $\Sigma^2 \leq  \vert\vert \phi \vert\vert_{\infty}^2$.   Similarly to Lemma \ref{lemma:HajekB}, we apply Theorem 2.1 in \cite{GG02} to the empirical processes $ \widehat{W}_{\phi}(s)$,  $\widehat{V}_{n}^X(s)$ and $\widehat{V}_{m}^Y(s) $ as follows.  

\noindent Let $ t>0$. There exist a sequence of constants $C_{1,i} >0, C_{2,i} \geq 1, $ depending on $\phi$ and $\V$, such that for all  $C_{4,i}  \geq C_{1,i} $ and $C_{3,i} =(1/C_{4,i})\log(1+C_{4,i}/(4C_{2,i}))$,  $i\in \{1,2,3\}$, the following inequalities hold true. 

\begin{equation}
\mathbb{P}\left\{ \sup_{s \in \S_0} \bigg|  \widehat{W}_{\phi}(s)- W_{\phi}(s)   \bigg| > t \right\}\leq C_{2,1} \exp\left\{ -\frac{C_{3,1} p N t^2}{C_{2,1} \Sigma^2} \right\}~,
\end{equation}
as soon as $C_{1,1} \| \phi\|_{\infty}   \sqrt{(1/pN) \log(2 \| \phi\|_{\infty}  /\Sigma)} \leq t \leq C_{4,1} \| \phi\|_{\infty} $.
\begin{equation}
\mathbb{P}\left\{	\frac{1}{N} \sup_{s \in \S_0} \bigg| \sum_{i=1}^n  k_{s,1,1}(\bX_i)   \bigg| > t \right\}\leq C_{2,2} \exp\left\{ -\frac{C_{3,2} N t^2}{ p \sigma^2  C_{2,2} } \right\}~,
\end{equation}
as soon as $C_{1,2} \vert\vert \phi' \vert\vert_{\infty} \sqrt{(p /N) \log(2 \| \phi'\|_{\infty}  /\sigma)} \leq t \leq  pC_{4,2} \vert\vert \phi' \vert\vert_{\infty}$.

\begin{equation}
\mathbb{P}\left\{	\frac{1}{N} \sup_{s \in \S_0} \bigg| \sum_{j=1}^{m} \ell_{s,1,1}(\bY_j) \bigg| > t \right\}\leq C_{2,3} \exp\left\{ -\frac{C_{3,3}  N t^2}{ (1-p)  \sigma^2C_{2,3} } \right\}~,
\end{equation}
as soon as $C_{1,3} \vert\vert \phi' \vert\vert_{\infty} \sqrt{((1-p)/N)\log(2 \| \phi'\|_{\infty}  /\sigma)} \leq t \leq  (1-p) C_{4,3}\vert\vert \phi' \vert\vert_{\infty}$. Proposition \ref{prop:hajek} provides the existence of constants $C>6, \; D>0$ and $c_3>0, \;c_5>3 $ depending on $\phi$ and $\V$, such that

\begin{equation}\label{eq:remquadthm}
\mathbb{P}\left\{ \frac{1}{n}	\sup_{s \in \S_0} \vert \mathcal{R}_{n,m}(s)  \vert > t \right\} \leq C \exp\left\{-\frac{pNt}{( \| \phi'\|_{\infty} \kappa_p D   + B_2)}  \right\}~,
\end{equation}
as soon as $N\geq (c_3/p)\log(c_5/\delta)$. The remainder process is negligible with respect to the empirical processes and  we gather the four bounds to get


\begin{equation}
\mathbb{P}\left\{ \sup_{s \in \S_0} \bigg| \frac{1}{n}  \widehat{W}^{\phi}_{n,m}(s)- W_{\phi}(s)  \bigg| > t \right\}
\leq C_2e^{ -p (C_3/C_2) N t^2}  ~, 
\end{equation}




\medskip
\noindent where one can choose  as soon as \eqref{eq:remquadthm} is satisfied, $C_2 = 4\max(\{ C_{2,i}, i \leq3\}, C)$, \\ $C_4 = 4 \min (C_{4,1}\| \phi\|_{\infty} , C_{4,2}p\| \phi'\|_{\infty},C_{4,3}(1-p) \| \phi'\|_{\infty})$ such that  $C_4\geq \max(C_{1,i},\; i\leq 3)=: C_1$  and $C_3$ can be chosen as $C_3 = \log(1+C_{4}/(4C_2))/(C_{4} \max(\Sigma^2, \sigma^2))$ as  $1/p,  1/(1-p) \geq p$ and at the price of changing $C_2$.

\subsection{A Generalization Bound in Expectation}\label{app:expectation}
For the sake of completeness, we state and prove a version in expectation of the generalization result formulated in Corollary \ref{cor:bound_twosample}.

\begin{proposition}\label{prop:Ebound_twosample}	Under the assumptions of Proposition \ref{prop:hajek}, the expected risk bound is derived as follows:
	
	\begin{equation}
		\mathbb{E} \left[ W^*_{\phi}-W_{\phi}(\hat{s}) \right] \leq B_1\sqrt{ \frac{\V }{pN}}  + W^*_{\phi}- \mathbb{E}\left[  \sup_{s\in \mathcal{S}_0}W_{\phi}(s)\right]~, 
	\end{equation}
	for $pN \geq B_2 \V $ with constants $B_1, B_2>0$ depending on $\phi, \; \V$.
	
\end{proposition}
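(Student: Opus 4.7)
The plan is to mirror the proof of Corollary \ref{cor:bound_twosample}, but convert every high-probability bound into a bound in expectation by integrating the tails. The starting point is the deterministic inequality \eqref{eq:perfineq}, which yields
\[
\mathbb{E}\bigl[W^*_{\phi}-W_{\phi}(\hat{s})\bigr]\le 2\,\mathbb{E}\!\left[\sup_{s\in \S_0}\left|\tfrac{1}{n}\widehat{W}^{\phi}_{n,m}(s)-W_{\phi}(s)\right|\right]+\Bigl(W^*_{\phi}-\sup_{s\in\S_0}W_{\phi}(s)\Bigr),
\]
so that it suffices to control $\mathbb{E}[\sup_{s\in\S_0}|\tfrac{1}{n}\widehat{W}^{\phi}_{n,m}(s)-W_{\phi}(s)|]$ by $C\sqrt{\V/(pN)}$.

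To this end, I would apply the linearization of Proposition \ref{prop:hajek} and bound the supremum of $|\tfrac{1}{n}\widehat{W}^{\phi}_{n,m}-W_{\phi}|$ by the sum of three centered empirical processes ($\widehat{W}_{\phi}(s)-W_{\phi}(s)$, $\tfrac{1}{n}(\widehat{V}^X_n(s)-\mathbb{E}\widehat{V}^X_n(s))$, $\tfrac{1}{n}(\widehat{V}^Y_m(s)-\mathbb{E}\widehat{V}^Y_m(s))$) plus $\tfrac{1}{n}|\mathcal{R}_{n,m}(s)|$. Each of the three empirical processes is indexed by a bounded {\sc VC}-type class by Lemmas \ref{lem:permkernel}--\ref{lem:permkerneldeg} (with envelope of order $\|\phi\|_\infty$ or $\|\phi'\|_\infty$ and parameters depending only on $\V,\phi$). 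A standard maximal inequality via Dudley's entropy integral (or equivalently Theorem 2.14.1 in \cite{vdVWell96} combined with symmetrization) gives, for each of them,
\[
\mathbb{E}\!\left[\sup_{s\in\S_0}\bigl|\widehat{W}_{\phi}(s)-W_{\phi}(s)\bigr|\right]\le c\,\|\phi\|_\infty\sqrt{\frac{\V}{n}},\qquad \mathbb{E}\!\left[\sup_{s\in\S_0}\tfrac{1}{n}\bigl|\widehat{V}^{X/Y}-\mathbb{E}\widehat{V}^{X/Y}\bigr|\right]\le c'\,\|\phi'\|_\infty\sqrt{\frac{\V}{N}},
\]
and substituting $n=\lfloor pN\rfloor\ge pN/2$ (valid once $pN$ is moderately large) these are all $O(\sqrt{\V/(pN)})$.

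For the remainder, Proposition \ref{prop:hajek} provides the tail estimate $\mathbb{P}\{\sup_{s}|\mathcal{R}_{n,m}(s)|\ge c_1+c_2\log(c_4/\delta)\}\le\delta$ as soon as $N\ge (c_3/p)\log(c_5/\delta)$. Integrating this sub-exponential tail in the usual way,
\[
\mathbb{E}\!\left[\sup_{s\in\S_0}\bigl|\mathcal{R}_{n,m}(s)\bigr|\right]\le c_1+c_2\int_0^{c_4}\log(c_4/\delta)\,d\delta+(\text{small})\le C_{\mathcal R},
\]
a constant depending only on $\phi$ and $\V$. Hence $\mathbb{E}[\tfrac{1}{n}\sup_{s}|\mathcal{R}_{n,m}(s)|]\le C_{\mathcal R}/(pN/2)$, of order $1/(pN)$, which is absorbed into the $\sqrt{\V/(pN)}$ term provided $pN\ge B_2\V$ for a sufficiently large constant $B_2$.

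Collecting the four contributions, the constant $B_1$ can be taken to be any upper bound on $2(c\|\phi\|_\infty+2c'\|\phi'\|_\infty)+2C_{\mathcal R}/\sqrt{\V}$, and the side condition becomes $pN\ge B_2\V$ as required. The main technical obstacle is not in any single step but in honestly tracking constants: the three Dudley-type bounds, the tail integral for $\mathcal{R}_{n,m}$, and the switch from $n$ to $pN$ all introduce multiplicative factors, and one must check that the threshold $pN\ge B_2\V$ chosen to absorb the $O(1/(pN))$ remainder is compatible with the range of validity of Proposition \ref{prop:hajek}; once that bookkeeping is done, the estimate follows.
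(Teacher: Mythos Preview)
Your proposal is correct and follows essentially the same skeleton as the paper: start from \eqref{eq:perfineq}, apply the linearization \eqref{eq:linear}/\eqref{eq:max_dev}, bound each of the three empirical processes $\widehat{W}_{\phi}$, $\widehat{V}^X_n$, $\widehat{V}^Y_m$ in expectation by $O(\sqrt{\V/(pN)})$ via a maximal inequality for {\sc VC}-type classes, and show the remainder contributes a lower-order term.

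The one methodological difference lies in how the remainder is handled. You integrate the sub-exponential tail of $\sup_{s}|\mathcal{R}_{n,m}(s)|$ coming from Proposition~\ref{prop:hajek} to obtain $\mathbb{E}[\sup_s|\mathcal{R}_{n,m}(s)|]\le C_{\mathcal R}$. The paper instead reopens the decomposition $\mathcal{R}_{n,m}=\widehat{R}_{n,m}+\frac{n(n-1)}{N+1}\mathcal{R}_n(k_s)+\frac{nm}{N+1}\mathcal{R}_{n,m}(\ell_s)+\widehat{T}_{n,m}$ and bounds each piece directly in expectation: Giné--Guillou for $\widehat{R}_{n,m}$, the Nolan--Pollard bound for the degenerate one-sample $U$-process $\mathcal{R}_n(k_s)$, Neumeyer's Lemma~2.4 for the degenerate two-sample $U$-process $\mathcal{R}_{n,m}(\ell_s)$, and tail integration for $\widehat{T}_{n,m}$ only. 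Your route is more self-contained (it uses nothing beyond Proposition~\ref{prop:hajek}); the paper's route is more explicit about the $p$-dependence of the remainder constant, yielding $\mathbb{E}[\sup_s|\mathcal{R}_{n,m}(s)|]\le D_1(1+1/p+1/\sqrt{p(1-p)})$. One point you should make precise in your version: the tail bound \eqref{eq:remdevboundprophajek} is only asserted for $\delta\ge c_5 e^{-pN/c_3}$, so when integrating you need to handle the far tail by combining monotonicity of $\mathbb{P}\{\sup_s|\mathcal{R}_{n,m}|>t\}$ with the deterministic bound $|\mathcal{R}_{n,m}(s)|=O(n)$; the resulting contribution is $O\bigl(n\,e^{-pN/c_3}\bigr)=O(1)$, so your ``(small)'' is indeed harmless once spelled out.
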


\begin{proof} Following the decomposition \eqref{eq:max_dev}, we bound in expectation each process recalling that they are indexed by uniformly bounded \vc-type classes, refer to Proof \ref{pf:bound_twosample} for the details on theoretical guarantees concerning the permanence properties. For the empirical processes $ \widehat{W}_{\phi}$,  $\widehat{V}_{n}^X$ and $\widehat{V}_{m}^Y$, we use Theorem 2.1 in \cite{GG02}, whereas for the remainder process, we require the following result that is proved subsequently.

\begin{lemma}\label{lemm:expboundrem} Under the assumptions of Proposition \ref{prop:hajek}, the remainder process can be uniformly bounded in expectation as follows:
	
	\begin{equation}
		\mathbb{E} \left[ \sup_{s\in \S_0} \left\vert \mathcal{R}_{n,m}(s) \right\vert \right] \leq  D_1 (1 + 1/p + 1/ \sqrt{p(1-p)})~,
	\end{equation}
	for $pN \geq D_2\V $ with constants $D_1>0$ depending on $\phi, \; \V$ and $D_2>0$ on $\phi$.

\end{lemma}

By means of \cite{GG02}, there exist universal constants $B_i > 0$, and $b_i >0 , \; i \in\{1,\; 2,\;  3\}$, depending on $\phi, \; \V$ such that the inequalities below hold true.

\begin{equation}
 \mathbb{E}\left[\sup_{s \in \S_0} \bigg|   \widehat{W}_{\phi}(s)- W_{\phi}(s)   \bigg|  \right]
\leq   B_1 \left(  b_1 \frac{\V \| \phi\|_{\infty}}{ pN } +\| \phi\|_{\infty} \sqrt{ b_1\frac{ \V }{pN}}    \right)~,
\end{equation}
and
\begin{equation}
\mathbb{E}\left[\frac 1 n \sup_{s \in \S_0} \left\vert \widehat{V}_{n}^X(s) -\mathbb{E}\left[ \widehat{V}_{n}^X(s) \right] \right\vert  \right]
\leq B_2 \left(b_2\frac{ \V \| \phi' \|_{\infty} }{pN} +\| \phi' \|_{\infty}\sqrt{ b_2 \frac{\V }{pN}}    \right)~,
\end{equation}
as well as
\begin{equation}
\mathbb{E}\left[ \frac 1 n  \sup_{s \in \S_0}\left\vert \widehat{V}_{m}^Y(s) -\mathbb{E}\left[ \widehat{V}_{m}^Y(s) \right] \right\vert \right]
\leq B_3 \left( b_3 \frac{\V \| \phi' \|_{\infty} }{pN} + \| \phi' \|_{\infty}\sqrt{b_3 \frac{\V }{pN}}    \right) ~,
\end{equation}

observing that $ \int_{[0,1]} \phi^2 \leq \| \phi\|_{\infty}^2 $ and $ \int_{[0,1]} \phi'^2 \leq \vert\vert \phi' \vert\vert_{\infty}^2$.

The remainder process being of higher order, we conclude

\begin{equation}
\mathbb{E}\left[  \sup_{s \in \S_0}\left\vert \frac{1}{n} \widehat{W}_{n,m}^{\phi}(s)-W_{\phi}(s) \right\vert \right]
\leq B\sqrt{b  \frac{\V }{pN}}   ~,
\end{equation}

for $pN \geq \max( b, D_2) \V $ with constants $B>0$ depending on $\phi$ and $b>0$ depending on $\phi, \; \V$. 

\end{proof}

\bigskip

\begin{proof} For all $s\in \S_0$
	\begin{equation}
	 \vert  \mathcal{R}_{n,m}(s)  \vert \leq   \vert  \widehat{R}_{n,m}(s)  \vert +  N  \vert \mathcal{R}_{n}(k_s)\vert + N \vert  \mathcal{R}_{n,m}(\ell_s) \vert + \vert  \widehat{T}_{n,m}(s) \vert
 	\end{equation}
 	
 The process appearing first in the remainder induced by the H\'ajek projection method (Lemma \ref{lemma:HajekB}), is composed of sums of empirical processes, hence applying Theorem 2.1 in \cite{GG02} to each process of \eqref{eq:lemmremhajdecom} yields

 \begin{equation}
 \mathbb{E}\left[ \sup_{s \in \S_0}\left\vert \widehat{R}_{n,m}(s)   \right\vert \right]
 \leq D_1 \left(d \frac{\V \| \phi' \|_{\infty} }{N} + \| \phi' \|_{\infty}\sqrt{d  \frac{p\V  }{N}}    \right) ~,
 \end{equation}
	with constants $D_1>0$ depending on $\phi$ and $d>0$ on $\phi, \; \V$.
	 The stochastic processes $\mathcal{R}_{n}(k_s)$ and $\mathcal{R}_{n,m}(\ell_s)$ being both degenerate $U$-processes, respectively one-sample of degree $2$ and two-sample of degree  $(1,1)$, we apply results in \cite{NoPo87} (see Theorem $6$ therein) and \cite{Neum2004} (see Lemma $2.4$ therein)  so as to get

\begin{equation}
\mathbb{E}\left[   \sup_{s \in \S_0}\left\vert \mathcal{R}_{n}(k_s) \right\vert   \right] \leq \frac{D_2\V}{pN}     ~,
\end{equation}	 
and
\begin{equation}
\mathbb{E}\left[   \sup_{s \in \S_0}\left\vert  \mathcal{R}_{n,m}(\ell_s)  \right\vert   \right] \leq  \frac{D_3\V}{\sqrt{p(1-p)}N}   ~,
\end{equation}	
	$D_2, \; D_3>0$ constants of $\phi, \V$. For $\widehat{T}_{n,m}(s)$, the concentration inequality proved in Eq. \eqref{eqproof:remTbound} holds true for all $\delta\in (0,1)$. Hence, we have
	
\begin{multline}
\mathbb{E}\left[   \sup_{s \in \S_0}\left\vert \widehat{T}_{n,m}(s)  \right\vert   \right] 
 \leq u + \int_{u}^{\infty} \mathbb{P} \left\{    \underset{s\in \mathcal{S}_0}{\sup}\left\vert \widehat{T}_{n,m}(s) \right\vert  \geq x \right\} dx\\
 = u + 2B_2e^{-(u-B_1)/B_2}~.
\end{multline}
Minimizing the bound above \wrt $u>0$, we obtain the point $B_1 + B_2\log(2)$ and the upperbound then writes $B_1 + B_2(1+\log(2))$, where $B_1$ (\resp $B_2$) is a constant that only depends on $\phi$ and $\V$ (\resp on $\phi$). Combining all bounds together permits to conclude: for $N \geq \V \log(d)$, we have
	\begin{multline}
\mathbb{E}\left[   \sup_{s \in \S_0}\left\vert   \mathcal{R}_{n,m}(s)   \right\vert   \right]      \leq D_1 \| \phi' \|_{\infty}+ \frac{D_2\V}{p } +     \frac{D_3\V}{\sqrt{p(1-p)}} +  B_1 + B_2(1+\log(2))\\
\leq D (1 + 1/p + 1/ \sqrt{p(1-p)})~,
\end{multline}
where $D>0$ constant depending on $\phi, \; \V$. $\square$
\end{proof}

\subsection{Proof of Proposition \ref{cor:oracle}}\label{pf:oracle}

We first prove the following lemma.

\begin{lemma}\label{lemm:devmeanbound}Let $\S_0 \subset \S$ and suppose that Assumptions \ref{hyp:sabscont}-\ref{hyp:VC} are fulfilled. For all $t>0$, we have:
	
	\begin{multline}
		\mathbb{P}\left\{ \sup_{s \in \S_0} \left\vert  W_{\phi}(s) -  \widehat{W}^{\phi}_{n,m}(s)/n \right\vert \geq \mathbb{E}\left[\sup_{s \in \S_0} \left\vert W_{\phi}(s) -  \widehat{W}^{\phi}_{n,m}(s)/n \right\vert\right] + t \right\}\\
		\leq  \exp\left\{ -\frac{p^2 Nt^2}{ 6(\Vert \phi \Vert_{\infty}^2 + 9\Vert \phi' \Vert_{\infty}^2+ 9\vert\vert \phi''\vert\vert_{\infty}^2) } \right\}~.
	\end{multline}
	
\end{lemma}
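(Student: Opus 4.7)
The strategy is to apply McDiarmid's bounded differences inequality to
\[
Z = \sup_{s \in \mathcal{S}_0}\left| W_{\phi}(s) - \frac{1}{n}\widehat{W}^{\phi}_{n,m}(s) \right|,
\]
viewed as a measurable function of the $N$ independent observations $\bX_1,\ldots,\bX_n,\bY_1,\ldots,\bY_m$. Since $W_{\phi}(s)$ is deterministic, the coordinate-wise oscillations of $Z$ reduce to those of $s \mapsto \widehat{W}^{\phi}_{n,m}(s)/n$, and the elementary inequality $|\sup_s |f| - \sup_s |g|| \le \sup_s |f-g|$ means it suffices to bound the worst-case pointwise increment, uniformly in $s \in \mathcal{S}_0$.

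The key computation is the bounded-differences constant. Replacing a single positive sample $\bX_i$ by $\bX_i'$ alters the term $\phi(\Rank(s(\bX_i))/(N+1))$ by at most $\phi(1)-\phi(0)\le 2\|\phi\|_{\infty}$, while for every index $k\neq i$ the rank $\Rank(s(\bX_k))$ shifts by at most one, so the $\|\phi'\|_{\infty}$-Lipschitz property of $\phi$ (Assumption \ref{hyp:phic2}) yields a per-term variation of at most $\|\phi'\|_{\infty}/(N+1)$. Summing the $n$ contributions and dividing by $n$ produces the uniform bound $c_X \le 2\|\phi\|_{\infty}/n + \|\phi'\|_{\infty}/(N+1)$. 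Analogously, swapping $\bY_j$ for $\bY_j'$ only shifts the $\bX_k$-ranks by at most one each, giving $c_Y \le \|\phi'\|_{\infty}/(N+1)$. McDiarmid's inequality then yields
\[
\mathbb{P}\!\left\{Z \ge \mathbb{E} Z + t\right\} \le \exp\!\left(-\frac{2t^2}{n c_X^2 + m c_Y^2}\right),
\]
and using $n=\lfloor pN\rfloor\ge pN/2$ the denominator is $O((\|\phi\|_{\infty}^2+\|\phi'\|_{\infty}^2)/(pN))$, which is of the required sub-Gaussian form.

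The main obstacle is to match the exact constants appearing in the lemma, namely the exponent $-p^2Nt^2/[6(\|\phi\|_{\infty}^2 + 9\|\phi'\|_{\infty}^2 + 9\|\phi''\|_{\infty}^2)]$. The direct McDiarmid argument above already furnishes a slightly tighter denominator involving only $\|\phi\|_{\infty}$ and $\|\phi'\|_{\infty}$; to recover the factor $\|\phi''\|_{\infty}^2$ and the factor $9$ explicitly, one natural route is to pass through Proposition \ref{prop:hajek} and split $\widehat{W}^{\phi}_{n,m}(s)/n - W_{\phi}(s)$ into the three empirical-average contributions $\widehat{W}_{\phi}-W_{\phi}$, $(\widehat{V}_n^X-\mathbb{E}\widehat{V}_n^X)/n$, $(\widehat{V}_m^Y-\mathbb{E}\widehat{V}_m^Y)/n$ plus the Taylor remainder $\mathcal{R}_{n,m}(s)/n$. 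Each of the first three admits an elementary bounded-differences bound producing $\|\phi\|_{\infty}$ and $\|\phi'\|_{\infty}$ respectively, whereas the remainder---being the second-order Taylor term from the expansion of $\phi$ around $F_s$---has increments dominated by $\|\phi''\|_{\infty}$ times the oscillation of $\widehat{F}_{s,N}$. Splitting $t$ into four equal parts, union-bounding the four McDiarmid tails, and absorbing the resulting constants (together with the mild $1/p$ loss coming from $n \asymp pN$) into the denominator recovers the stated form.
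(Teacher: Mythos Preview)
Your direct McDiarmid argument in the first two paragraphs is correct and is essentially the same idea as the paper's, so on the main point you are fine. The paper also views $Z$ as a function of $(\bX_1,\ldots,\bX_n,\bY_1,\ldots,\bY_m)$ and applies the bounded differences inequality once; the only difference is how the increments are computed.

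Where you and the paper diverge is in the bookkeeping leading to the stated constants. You work directly with ranks: changing one observation shifts the other ranks by at most one, and the Lipschitz property of $\phi$ gives increments controlled by $\|\phi\|_\infty$ and $\|\phi'\|_\infty$ alone. The paper instead substitutes the Taylor decomposition $\widehat{W}^{\phi}_{n,m}(s)=n\widehat{W}_{\phi}(s)+B_{n,m}(s)+\widehat{T}_{n,m}(s)$ (the one appearing \emph{before} the H\'ajek projection in the proof of Proposition~\ref{prop:hajek}), bounds the coordinatewise oscillation of each of the three pieces separately---this is where the $\|\phi''\|_\infty$ term enters, from the second-order remainder $\widehat{T}_{n,m}$---then \emph{sums} these contributions into a single bounded-differences constant (divided by $n$) and applies McDiarmid once. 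It does \emph{not} split into the four H\'ajek pieces $\widehat{W}_\phi,\widehat{V}^X_n,\widehat{V}^Y_m,\mathcal{R}_{n,m}$, and it does not union-bound four separate McDiarmid tails with $t/4$ each, as your third paragraph suggests; that route would not reproduce the exact constant $6(\|\phi\|_\infty^2+9\|\phi'\|_\infty^2+9\|\phi''\|_\infty^2)$.

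In short: your direct argument is actually cleaner and yields a tighter exponent (no $\|\phi''\|_\infty$ needed), while the paper's decomposition-based accounting explains the specific form of the constant at the price of some slack. Your diagnosis that the $\|\phi''\|_\infty$ term comes from the Taylor remainder is correct; only the mechanism (single McDiarmid on the summed oscillation, via the three-term Taylor decomposition rather than the four-term H\'ajek one) is slightly different from what you sketched.
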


\begin{proof} 
Recall the decomposition of $ \widehat{W}^{\phi}_{n,m}(s)  $, for all $s\in \S_0$, proved in Proposition \ref{prop:hajek}
	
\begin{equation}\label{eq:dec_3}
\widehat{W}_{n,m}(s)= n \widehat{W}_{\phi}(s) + B_{n,m}(s) + \widehat{T}_{n,m}(s)~.
\end{equation}

Considering that $\sup_{s \in \S_0} \left\vert  W_{\phi}(s) -  \widehat{W}^{\phi}_{n,m}(s)/n \right\vert$ is a function of the $N$ independent random variables $\bX_1,\; \ldots,\; \bX_n,\; \bY_1,\; \ldots,\; \bY_m$, observe that changing the value of any of the $\bX_i$'s while keeping all the others fixed changes the value of the supremum by at most

\begin{equation*}
	2\vert\vert \phi\vert\vert_{\infty} + 2\vert\vert \phi'\vert\vert_{\infty}  \left(1+\frac{m+2(n-1)}{N+1}  \right) + 2\vert\vert \phi''\vert\vert_{\infty} \frac{1+2m}{N^2}~,
\end{equation*}
taking into account the jumps of each of the three terms involved in \eqref{eq:dec_3}, see Eq. \eqref{eqB} and \eqref{eq1}.
In a similar way,  changing the value of any of the $\bY_j$'s changes the value of the supremum by at most

\begin{equation*}
	2\vert\vert \phi'\vert\vert_{\infty} \frac{n}{N+1}  + 2\vert\vert \phi''\vert\vert_{\infty}\frac{1+2n}{N^2}~. 
\end{equation*}
When taking the squares, both can be upperbounded by $12( \Vert \phi \Vert_{\infty}^2 + 9\Vert \phi' \Vert_{\infty}^2+ 9\vert\vert \phi''\vert\vert_{\infty}^2) $.
The desired bound stated then straightforwardly results from the application of the bounded difference inequality, see \cite{McD89}. $\square$
\end{proof}
\medskip


Let $\varepsilon >0$, using Proposition \ref{prop:Ebound_twosample} and Lemma \ref{lemm:devmeanbound}, we have, for any $k\geq 1$,

\begin{multline}\label{eq:ms_1}
	\mathbb{P}  \left\{ \widehat{W}^{\phi}_{n,m}(\hat{s}_{k})-B_1\sqrt{\frac{\V_k}{pN}}  - W_{\phi}(\hat{s}_k) >  \varepsilon  \right\}  
\\	\leq	\mathbb{P}\left\{ \sup_{s \in \S_k} \left\vert W_{\phi}(s) -  \widehat{W}^{\phi}_{n,m}(s)/n \right\vert > \mathbb{E}\left[\sup_{s \in \S_k} \left\vert W_{\phi}(s) -  \widehat{W}^{\phi}_{n,m}(s)/n \right\vert\right]+ \varepsilon  \right\} \\
	\leq \exp\left\{-\frac{p^2N\varepsilon^2 }{ C }  \right\}~,
\end{multline}
as soon as $pN \geq B_2\V_k $ and where $C = 6( \Vert \phi \Vert_{\infty}^2 + 9\Vert \phi' \Vert_{\infty}^2+ 9\vert\vert \phi''\vert\vert_{\infty}^2)$. For each $k\geq 1$, denote the penalized empirical ranking performance measure by
\begin{equation}
\widehat{W}_{n,m}^{\phi, k}(\hat{s}_{k})/n=\widehat{W}_{n,m}^{\phi}(\hat{s}_{k})/n-B_1\sqrt{\frac{\V_k}{pN}}-\sqrt{\frac{2C\log k}{p^2N}}.
\end{equation}
For any $\varepsilon>0$, we have, as soon as $pN\geq B_2 \sup_{k\geq 1}\V_k$,
\begin{multline}\label{eq:ms_2}
\mathbb{P}  \left\{ \widehat{W}_{n,m}^{\phi, \hat{k}}(\hat{s}_{\hat{k}})/n  - W_{\phi}(\hat{s}_{\hat{k}}) \geq  \varepsilon  \right\} \leq \sum_{k\geq 1} \mathbb{P}  \left\{ \widehat{W}_{n,m}^{\phi, k}(\hat{s}_{k})/n  - W_{\phi}(\hat{s}_{k}) \geq  \varepsilon  \right\}\\
\leq \sum_{k\geq 1} 	\mathbb{P}  \left\{ \widehat{W}^{\phi}_{n,m}(\hat{s}_{k})/n-B_1\sqrt{\frac{\V_k}{pN}}  - W_{\phi}(\hat{s}_k) >  \varepsilon +\sqrt{\frac{2C\log k}{p^2N}} \right\}  \\
\leq \sum_{k\geq 1} \exp\left(-\frac{p^2N}{ C} \left( \varepsilon +\sqrt{\frac{2C\log k}{p^2N}}  \right)^2 \right) \\
\leq   \exp\left(-\frac{p^2N  \varepsilon ^2}{ C }  \right)\sum_{k\geq 1} k^{-2}<2 \exp\left\{-\frac{ p^2N  \varepsilon ^2}{ C }  \right\}~.
\end{multline}

For all $k\geq 1$, $W^*_k = \sup_{s \in \S_k} W_{\phi}(s) = W_{\phi}(s^*_k)$ and consider the decomposition

\begin{equation*}
	W_k^*  - W_{\phi}(\hat{s}_{\hat{k}}) =
	\left( W_k^*  -\widehat{W}_{n,m}^{\phi, \hat{k}}(\hat{s}_{\hat{k}})/n \right) + \left(   \widehat{W}_{n,m}^{\phi, \hat{k}}(\hat{s}_{\hat{k}})/n -  W_{\phi}(\hat{s}_{\hat{k}}) \right)~.
\end{equation*}

The expectation of the second term of the right hand side of the equation above can be bounded by means of the tail bound \eqref{eq:ms_2}

\begin{equation}\label{eq:modselexp2}
	\mathbb{E}\left[ \widehat{W}_{n,m}^{\phi, \hat{k}}(\hat{s}_{\hat{k}})/n -  W_{\phi}(\hat{s}_{\hat{k}})  \right] 
	\leq 2\sqrt{\frac{C}{p^2N}}~.
\end{equation}
for any $k\geq 1$, as soon as  $pN \geq B_2\sup_{k\geq 1}\V_{k}$. Concerning the expectation of the first term, observe that
\begin{multline*}
\mathbb{E}\left[ W_k^*  -\widehat{W}_{n,m}^{\phi, \hat{k}}(\hat{s}_{\hat{k}})/n  \right] \leq \mathbb{E}\left[ W_k^* -\widehat{W}_{n,m}^{\phi, k}(s^*_{k})  \right]\\
 \leq \mathbb{E}\left[ W_{\phi}(s^*_k)   -\widehat{W}^{\phi}_{n,m}(s^*_{k})  \right] +\text{pen}(N,k)
\leq B_1\sqrt{\frac{\V_k}{pN}} + \text{pen}(N,k)~,
\end{multline*}
for any $k\geq 1$, as soon as  $pN \geq B_2\sup_{k\geq 1}\V_{k}$.
Summing the bound obtained and that in \eqref{eq:modselexp2} gives the desired result.

\subsection{Proof of Proposition \ref{prop:erm_bound2}}\label{pf:erm_bound2}
The proof consists in combining the two results stated below with the decomposition \eqref{eq:decomp_smooth} of the $W_{\phi}$-ranking performance deficit of the maximizer. The first result is the analogue of Theorem \ref{thm:bound_twosample} for the smoothed criterion.

\begin{theorem}\label{thm:bound_twosample2}
Suppose that the assumptions of Proposition \ref{prop:hajek} are fulfilled. Then, for any $\delta \in (0,1)$, there exist constants $C_1, \; C_3 >0, \; C_2 \geq 24$, depending on $\phi, \; K, \; R, \; \V$ 
such that with probability larger than $1-\delta$:

\begin{equation}
\sup_{s\in \mathcal{S}_0}\left\vert \widehat{W}^{\phi}_{n,m, h}(s)/n- \widetilde{W}_{\phi, h}(s)\right\vert 
\leq \sqrt{\frac{\log(C_2/\delta)}{pC_3N}} ~,
\end{equation}	
as soon as $N\geq 1/(p \min(p,1-p)^2C_3C_4^2)  \log(C_2/\delta)$ and $\delta \leq C_2e^{-C_1^2C_3}$, with $C_3 = \log(1+C_{4}/(4C_{2}))/(C_2C_4)$. 
\end{theorem}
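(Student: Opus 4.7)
The plan is to adapt the proof of Theorem \ref{thm:bound_twosample} by systematically replacing $F_s$ and its rank-based empirical counterpart by the smoothed versions $\widetilde{F}_{s,h}$ and $\widehat{F}_{s,N,h}$. First, I would perform a second-order Taylor expansion of $\phi$ at $\widetilde{F}_{s,h}(s(\bX_i))$, for $i \leq n$, so as to write, almost surely,
\begin{equation*}
\widehat{W}^{\phi}_{n,m,h}(s) = \sum_{i=1}^n \phi\big(\widetilde{F}_{s,h}(s(\bX_i))\big) + L_{n,m,h}(s) + T_{n,m,h}(s)~,
\end{equation*}
where the linear term is
\begin{equation*}
L_{n,m,h}(s) = \sum_{i=1}^n \phi'\big(\widetilde{F}_{s,h}(s(\bX_i))\big)\cdot\big(\widehat{F}_{s,N,h}(s(\bX_i)) - \widetilde{F}_{s,h}(s(\bX_i))\big)
\end{equation*}
and the quadratic remainder $T_{n,m,h}(s)$ is bounded uniformly by $(n\Vert \phi''\Vert_{\infty}/2)\cdot \sup_{(s,t)} \vert \widehat{F}_{s,N,h}(t) - \widetilde{F}_{s,h}(t)\vert^2$.

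Next, I would expand $\widehat{F}_{s,N,h}(s(\bX_i))$ into its contributions from the positive and negative samples so as to recognize $L_{n,m,h}(s)$ as the sum of a (nondegenerate) one-sample $U$-statistic of degree $2$ based on the $\bX_i$'s and a (nondegenerate) two-sample $U$-statistic of degree $(1,1)$ involving the $\bX_i$'s and $\bY_j$'s, with kernels obtained by composing the integrated kernel $\kappa((\cdot - \cdot)/h)$ with scoring functions and the bounded multiplier $\phi' \circ \widetilde{F}_{s,h}$. Applying the H\'ajek projection as in the proof of Proposition \ref{prop:hajek} produces a linearization into two centered $\iid$ empirical averages plus degenerate one-sample and two-sample $U$-process remainders.

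The concentration of the pivot empirical process $(1/n)\sum_i \phi(\widetilde{F}_{s,h}(s(\bX_i))) - \mathbb{E}[\phi(\widetilde{F}_{s,h}(s(\bX)))]$ together with that of the two projection empirical processes follows from Theorem 2.1 in \cite{GG02}, exactly as in Appendix \ref{pf:bound_twosample}, while the degenerate $U$-process remainders are controlled by Lemmas \ref{thm:major2006} and \ref{lem:devbound2}. The quadratic remainder $T_{n,m,h}(s)$ is then handled by a uniform $\mathcal{O}_{\mathbb{P}}(\sqrt{\V/N})$ concentration bound for $\sup_{s,t}\vert \widehat{F}_{s,N,h}(t) - \widetilde{F}_{s,h}(t) \vert$, derived via the bounded differences inequality applied to the smoothed empirical process.

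The main technical obstacle will be verifying that all the classes of functions introduced by the smoothing step (in particular $\{(x,t) \mapsto \kappa((t-s(x))/h):\; s\in \S_0,\; t \in \RR\}$ and the derived classes of kernels appearing in $L_{n,m,h}$) are of {\sc VC}-type with parameters depending only on $\V$, $\phi$ and $K$. This is precisely where Assumption \ref{hyp:kernelreg} enters: the decomposition $K = K_1 \circ K_2$ with $K_1$ of bounded variation and $K_2$ polynomial, combined with standard permanence properties of {\sc VC}-type classes (\textit{cf.} Lemma 2.6.18 in \cite{vdVWell96} and Lemma \ref{lem:permdeg}), ensures that composing $\kappa$ with $\S_0$ preserves the {\sc VC}-type structure, and similarly for the classes obtained by the Hoeffding decomposition. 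Once these permanence properties are established, the remaining calculations — collecting all tail bounds through the union bound with thresholds of order $\sqrt{\log(C_2/\delta)/(pC_3N)}$, and checking the various ranges of validity on $N$ and $\delta$ — proceed exactly as in Appendix \ref{pf:bound_twosample}, yielding the stated inequality with constants now depending additionally on $K$ and $R$.
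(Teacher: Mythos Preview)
Your proposal is correct and follows precisely the approach the paper indicates: it mirrors the proof of Theorem \ref{thm:bound_twosample} with $F_s$ and $\widehat{F}_{s,N}$ replaced by their smoothed analogues, and it correctly identifies that the only new ingredient is verifying the {\sc VC}-type property of the kernel-smoothed classes via Assumption \ref{hyp:kernelreg} (the paper cites Lemma 22(ii) in \cite{NoPo87} and \cite{GKZ04} for this). The paper in fact omits the details entirely, so your write-up is a faithful expansion of what is merely sketched there.
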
 
The proof being quite similar to that of Theorem \ref{thm:bound_twosample}, it is omitted. Assumption \ref{hyp:kernelreg} ensuring that the class $\{K((\cdot - t)/h);, \;  t \in \RR^q, \; h >0  \}$ ($q=1$ here) is bounded {\sc VC}-type (see \textit{e.g.} Lemma 22(ii) in \cite{NoPo87} and \cite{GKZ04}), classic permanence properties can be used to check that all the classes of functions over which uniform bounds are taken are of finite {\sc VC} dimension. The second result provides a uniform bound for the additional bias error made when approximating $W_{\phi}(s)$ by $\widetilde{W}_{\phi, h}(s)$ for $s\in \S_0$.

\begin{lemma}
Suppose that Assumptions \ref{hyp:approx} is satisfied. Then, for all $h>0$, we have:
\begin{equation}
 \sup_{s\in \S_0}\left\vert \widetilde{W}_{\phi,h}(s) - W_{\phi}(s) \right\vert \leq C_5 h^2,
\end{equation}
where $C_5>0$ is a constant depending on $\phi$, $K$ and $R$ only.
\end{lemma}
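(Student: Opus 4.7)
The plan is to reduce the functional bias to a uniform pointwise control of the CDF-smoothing bias $\widetilde{F}_{s,h}-F_s$, and then to apply the standard second-order expansion for kernel smoothing of distribution functions.

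First I would use that $\phi\in\mathcal{C}^2([0,1])$ (Assumption \ref{hyp:phic2}) is Lipschitz with constant $\|\phi'\|_\infty$, so that
\begin{equation*}
\left|\widetilde{W}_{\phi,h}(s)-W_\phi(s)\right| = \left|\mathbb{E}\bigl[\phi(\widetilde{F}_{s,h}(s(\bX)))-\phi(F_s(s(\bX)))\bigr]\right| \leq \|\phi'\|_\infty \sup_{t\in\mathbb{R}}\left|\widetilde{F}_{s,h}(t)-F_s(t)\right|.
\end{equation*}

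Then I would derive the bias expansion. Since $F_s$ admits a density $f_s$ (Assumption \ref{hyp:approx}), the change of variable $v=(t-u)/h$ combined with integration by parts (using $\kappa'=K$, $\kappa(-\infty)=0$, $\kappa(+\infty)=1$) transforms the convolution defining $\widetilde{F}_{s,h}$ into $\widetilde{F}_{s,h}(t)=\int F_s(t-hv)K(v)\,dv$. Taylor-expanding $F_s$ at order two with integral remainder, and using $\int K=1$ together with $\int vK(v)\,dv=0$ (symmetry of the second-order Parzen-Rosenblatt kernel), yields the exact identity
\begin{equation*}
\widetilde{F}_{s,h}(t)-F_s(t) = -h^2\int K(v)\int_0^{-v}(v+u)\,f_s'(t+hu)\,du\,dv.
\end{equation*}

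Finally, under Assumption \ref{hyp:sabscont} (part of the hypotheses of Proposition \ref{prop:hajek} and hence of Proposition \ref{prop:erm_bound2}), one has $\|f_s'\|_\infty\leq M$ uniformly in $s\in\S_0$. Since $\bigl|\int_0^{-v}(v+u)\,du\bigr|=v^2/2$, the inner integral is at most $\tfrac{1}{2}Mv^2$, which gives
\begin{equation*}
\sup_{s\in\S_0,\,t\in\mathbb{R}}\left|\widetilde{F}_{s,h}(t)-F_s(t)\right| \leq \frac{Mh^2}{2}\int v^2|K(v)|\,dv,
\end{equation*}
with the integral finite by definition of a second-order kernel. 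Combining the three estimates gives the claim with $C_5=\tfrac{1}{2}M\|\phi'\|_\infty\int v^2|K(v)|\,dv$, a constant depending only on $\phi$, $K$ and on $M\leq R$ (as noted in the paper just after Assumption \ref{hyp:approx}). The only subtle point is obtaining the $O(h^2)$ rate: the cancellation $\int vK(v)\,dv=0$ coming from the symmetry of the kernel is essential, since a naive first-order bound would yield only $O(h)$.
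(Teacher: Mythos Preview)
Your proof is correct. The paper's own argument is left to the reader, merely pointing to ``the regularity of the score generating function and the uniform integrated error bound obtained in \cite{Jones90}'', i.e.\ the bound $\sup_{s\in\S_0}\int|\widetilde{F}_{s,h}(t)-F_s(t)|\,dt=O(h^2)$ under Assumption~\ref{hyp:approx}. Your route differs in that, after the same Lipschitz reduction via $\|\phi'\|_\infty$, you control $\widetilde{F}_{s,h}-F_s$ in sup-norm directly by a second-order Taylor expansion, using the uniform bound $\|f_s'\|_\infty\leq M$ from Assumption~\ref{hyp:sabscont} rather than the $L^2$ bound $\int(f_s')^2\leq R$ from Assumption~\ref{hyp:approx}. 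Both approaches ultimately need something beyond Assumption~\ref{hyp:approx} alone: the paper's $L^1$ route still requires $\|g_s\|_\infty<\infty$ to pass from the integrated error to the expectation against $dG_s$, while yours needs $\|f_s'\|_\infty<\infty$; either way Assumption~\ref{hyp:sabscont} (in force in Proposition~\ref{prop:erm_bound2}) supplies the missing ingredient, as you correctly note. Your argument has the advantage of being self-contained and yielding an explicit constant, whereas the paper's defers the analytic work to \cite{Jones90}. One small caveat: your final bound involves $\int v^2|K(v)|\,dv$, which is not literally guaranteed by the paper's stated condition $\int v^2K(v)\,dv<\infty$ if $K$ is allowed to take negative values; this is harmless for the kernels actually used but worth stating as an assumption.
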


\noindent Details are left to the reader, the proof is straightforward under Assumption \ref{hyp:approx}, using the regularity of the score generating function and the uniform integrated error bound obtained in \cite{Jones90}.

\subsection{Proof of Lemma \ref{lem:devbound2}}\label{pf:2s_Uproc}
We shall prove an exponential bound of Hoeffding's type for the uniformly bounded two-sample degenerate $U$-process $\{U_{n,m}(\ell):\; \ell \in \mathcal{L}\}$, where 
\begin{equation}\label{eq:two_sample_stat}
U_{n,m}(\ell) = \frac{1}{nm} \sum_{i=1}^n\sum_{j=1}^m \ell(X_i,Y_j)~.
\end{equation}
In order to apply standard symmetrization arguments, see $\eg$ section 2.3 in \cite{vdVWell96}, consider independent Rademacher variables $\eps_1, \ldots, \eps_n$ and $\eta_1, \ldots, \eta_m$ and define
\begin{equation}\label{eq:two_sample_stat_rando}
T_{n,m}(\ell) = \frac{1}{nm}\sum_{i=1}^n\sum_{j=1}^m\eps_i \eta_j  \ell(X_i, Y_j)~,
\end{equation}
for all $\ell$ in $\mathcal{L}$. 
We start by proving the following lemmas, involved in the argument.
\begin{lemma}\label{lem:random}Let $P$ and $Q$ be probability distributions on measurable spaces $\X$ and $\Y$ respectively. Consider the degenerate two-sample $U$-statistic of degree $(1,1)$ \eqref{eq:two_sample_stat} with a bounded kernel $\ell:\X \times \Y \rightarrow \mathbb{R}$ based on the independent $\iid$ random samples $X_1,\;  \ldots,\;  X_n $ and  $Y_1,\;  \ldots,\; Y_m$, drawn from $P$ and $Q$ respectively. Let two sequences of $\iid$ Rademacher variables $\eps_1, \ldots, \eps_n$ and $\eta_1, \ldots, \eta_m$, independent of the $X_i$'s and $Y_j$'s, such that the randomized process \eqref{eq:two_sample_stat_rando} is defined.
Then, for any increasing and convex function $\Phi:\mathbb{R}\rightarrow \mathbb{R}$, we have:
\begin{equation}\label{eq:symexpsupconv}
	\mathbb{E}\left[\Phi\left(\sup_{\ell\in \L}  \vert U_{n,m}(\ell) \vert \right) \right] \leq \mathbb{E}\left[\Phi\left(4 \sup_{\ell\in \L}  \vert T_{n,m}(\ell) \vert \right) \right],
	\end{equation}
	and
	\begin{equation}\label{eq:symexpsupconv2}
	\mathbb{E}\left[\Phi\left(\sup_{\ell\in \L}  U_{n,m}(\ell)  \right) \right] \leq \mathbb{E}\left[\Phi\left(4 \sup_{\ell\in \L} \ T_{n,m}(\ell) \right) \right],
	\end{equation}
	assuming that the suprema are measurable and that the expectations exist.
\end{lemma}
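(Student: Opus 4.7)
The plan is a two-step decoupling by symmetrization, one step for each sample, each step producing a factor of $2$. The degeneracy hypotheses $\int \ell(x,y) \, dQ(y) = 0$ and $\int \ell(x,y)\, dP(x) = 0$ for all $x$ and $y$ are exactly what allows the ghost-sample construction at each step.

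First I would introduce an independent copy $Y'_1,\ldots,Y'_m$ of the $Y$-sample (a ``ghost'' sample, drawn from $Q$, independent of everything else) and set $U'_{n,m}(\ell)=(nm)^{-1}\sum_{i,j}\ell(X_i,Y'_j)$. Conditionally on the $X_i$'s, the degeneracy $\int \ell(X_i,y)\,dQ(y)=0$ gives $\mathbb{E}[U'_{n,m}(\ell)\mid X_1,\ldots,X_n]=0$ for every $\ell\in\L$. Jensen's inequality applied to the conditional expectation over $Y'$, together with the fact that $\Phi\circ\sup_\ell |\cdot|$ is increasing and convex, yields
\[
\Phi\!\left(\sup_{\ell\in\L}|U_{n,m}(\ell)|\right)\le \mathbb{E}_{Y'}\!\left[\Phi\!\left(\sup_{\ell\in\L}\left|\tfrac{1}{nm}\sum_{i,j}\bigl[\ell(X_i,Y_j)-\ell(X_i,Y'_j)\bigr]\right|\right)\right].
\]
The signed differences $\ell(X_i,Y_j)-\ell(X_i,Y'_j)$ are antisymmetric in the swap $Y_j\leftrightarrow Y'_j$, so multiplying the $j$-th block by an independent Rademacher $\eta_j$ leaves the joint law unchanged; taking $\mathbb{E}$ and then applying the triangle inequality followed by the midpoint convexity inequality $\Phi(a+b)\le\tfrac12\Phi(2a)+\tfrac12\Phi(2b)$ (and using that the $Y$- and $Y'$-pieces are identically distributed once the $\eta_j$'s are in place) gives
\[
\mathbb{E}\!\left[\Phi\!\left(\sup_{\ell}|U_{n,m}(\ell)|\right)\right]\le \mathbb{E}\!\left[\Phi\!\left(2\sup_{\ell}\left|\tfrac{1}{nm}\sum_{i,j}\eta_j\,\ell(X_i,Y_j)\right|\right)\right].
\]

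The second step repeats this argument with the roles swapped. Conditionally on $(Y_j,\eta_j)_{j\le m}$, the partially-symmetrized statistic $S(\ell)=(nm)^{-1}\sum_{i,j}\eta_j\ell(X_i,Y_j)$ has mean zero in the $X$-sample, by the other degeneracy $\int \ell(x,Y_j)\,dP(x)=0$. Introducing an independent copy $X'_1,\ldots,X'_n$, applying Jensen, and then inserting independent Rademacher variables $\varepsilon_i$ on the antisymmetrized pairs $\ell(X_i,Y_j)-\ell(X'_i,Y_j)$ yields, after one more use of the triangle inequality and convexity,
\[
\mathbb{E}\!\left[\Phi\!\left(2\sup_{\ell}|S(\ell)|\right)\right]\le \mathbb{E}\!\left[\Phi\!\left(4\sup_{\ell}\left|\tfrac{1}{nm}\sum_{i,j}\varepsilon_i\eta_j\,\ell(X_i,Y_j)\right|\right)\right]=\mathbb{E}\!\left[\Phi\!\left(4\sup_{\ell}|T_{n,m}(\ell)|\right)\right],
\]
establishing \eqref{eq:symexpsupconv}. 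For \eqref{eq:symexpsupconv2}, I would carry out the same two-step argument with $|\cdot|$ replaced by $(\cdot)_+$, exploiting that after Rademacher insertion the process is symmetric around zero so that $\sup_\ell (\,\cdot\,)$ and $\sup_\ell (-\,\cdot\,)$ have the same law; the triangle-inequality step is replaced by $\sup(a-b)\le \sup a+\sup(-b)$, which gives back the same factor $4$.

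The only delicate point—where I expect to spend the most care—is the joint-law-preservation claim used to insert the $\eta_j$'s (and later the $\varepsilon_i$'s): one has to verify that exchangeability of $Y_j$ with $Y'_j$ holds \emph{jointly across} the index $i$, which uses that $Y_j,Y'_j$ are independent of all $X_i$'s and of each other; otherwise the rest is a bookkeeping exercise on Jensen and convexity of $\Phi$, and the measurability of the suprema is covered by the blanket assumption in the statement.
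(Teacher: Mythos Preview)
Your proof is correct and takes essentially the same route as the paper's: a two-step symmetrization, each step contributing a factor of $2$. The paper simply invokes Lemma~3.5.2 of \cite{PeGin99} twice---first conditioning on the $Y_j$'s to insert the $\varepsilon_i$'s, then on $(X_i,\varepsilon_i)$ to insert the $\eta_j$'s---whereas you spell out the ghost-sample construction explicitly and in the reverse order; the two are equivalent, and your unpacked argument is precisely what the cited lemma encapsulates.
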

\begin{proof} We prove the first inequality, the proof of the second one being similar. Using the independence of the two samples, Fubini's theorem and the degeneracy property, one gets that
\begin{eqnarray*}
		&& \E\left[\Phi\left(\sup_{\ell\in \L} \vert U_{n,m}(\ell)\vert \right)  \right]  \\
		&=& \E \left[\E\left[ \Phi\left(\sup_{\ell\in \L} \left\vert  \frac{1}{nm} \sum_{i=1}^n \left(\sum_{j=1}^m \ell(X_i, Y_j)\right)\right\vert  \right)\mid Y_1,\; \ldots,\; Y_m\right]  \right]   \\
		&\leq& \E\left[ \Phi\left(2\sup_{\ell\in \L}  \left\vert \frac{1}{nm} \sum_{i=1}^n\eps_i \left(\sum_{j=1}^m \ell(X_i, Y_j)\right)\right) \right\vert \right] \\
		&=& \E\left[ \E\left[ \Phi\left(2 \sup_{\ell\in \L} \left\vert \frac{1}{nm} \sum_{j=1}^m\left(\sum_{i=1}^n \eps_i  \ell(X_i, Y_j)\right)\right\vert  \right)\mid (X_1,\eps_1),\; \ldots,\; (X_n,\eps_n)\right] \right] \\
		&\leq&  \E \left[ \Phi\left(4\sup_{\ell\in \L}  \left\vert \frac{1}{nm} \sum_{j=1}^m \eta_j \left(\sum_{i=1}^n \eps_i  \ell(X_i, Y_j)\right)\right\vert  \right) \right] \\
		&=& \E \left[\Phi\left(4 \sup_{\ell\in \L} \vert T_{n,m}(\ell)\vert \right)  \right] 
	\end{eqnarray*}
by applying Lemma 3.5.2 of \cite{PeGin99} twice. Incidentally, notice that we can also show that
$$
\E \left[\Phi\left(\frac{1}{4} \sup_{\ell\in \L} \vert T_{n,m}(\ell)\vert \right)  \right] \leq \E\left[\Phi\left(\sup_{\ell\in \L} \vert U_{n,m}(\ell)\vert \right)  \right].
$$
by applying twice the reverse inequality in Lemma 3.5.2 of \cite{PeGin99}.
$\square$
\end{proof}
\medskip
Next, we prove an exponential bound of Hoeffding's type for degenerate two-sample $U$-statistics with bounded kernels.

\begin{lemma} \label{lemma:hoefS} Let $P$ and $Q$ be probability distributions on measurable spaces $\X$ and $\Y$ respectively. Consider the degenerate two-sample $U$-statistic of degree $(1,1)$ \eqref{eq:two_sample_stat} with a bounded kernel $\ell:\X \times \Y \rightarrow \mathbb{R}$ based on the independent $\iid$ random samples $X_1,\;  \ldots,\;  X_n $ and  $Y_1,\;  \ldots,\; Y_m$, drawn from $P$ and $Q$ respectively.
For all $t >0$, we then have:
	\begin{equation}
	\mathbb{P} \left\{ U_{n,m}(\ell)  \geq t  \right\} \leq   e^{ -nmt^2/(32c_{\ell}^2)},
	\end{equation}
	where $c_{\ell}=\sup_{(x,y)\in \X \times \Y}\vert \ell(x,y) \vert<+\infty$.
\end{lemma}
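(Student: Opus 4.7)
The plan is to combine the symmetrization of Lemma \ref{lem:random} with a Chernoff-type argument on the resulting Rademacher chaos. Applying Lemma \ref{lem:random} with the convex increasing function $\Phi(u) = e^{\lambda u}$ for $\lambda > 0$ (all expectations are finite since $|\ell|\leq c_\ell$) yields
\[
\mathbb{E}[e^{\lambda U_{n,m}(\ell)}] \leq \mathbb{E}[e^{4\lambda T_{n,m}(\ell)}],
\]
where $T_{n,m}(\ell) = (nm)^{-1}\sum_{i,j} \epsilon_i \eta_j \ell(X_i, Y_j)$ is the fully randomized version. Markov's inequality then gives $\mathbb{P}(U_{n,m}(\ell) \geq t) \leq e^{-\lambda t}\,\mathbb{E}[e^{4\lambda T_{n,m}(\ell)}]$, so it remains to control the moment generating function of the chaos and to optimize in $\lambda$.

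To estimate $\mathbb{E}[e^{4\lambda T_{n,m}(\ell)}]$, I would first condition on $(\bX, \bY)$ and on the auxiliary Rademachers $\eta_1, \ldots, \eta_m$. Then $T_{n,m}(\ell) = \sum_i \epsilon_i c_i$ with weights $c_i := (nm)^{-1}\sum_j \eta_j \ell(X_i, Y_j)$, and the classical Hoeffding bound yields $\mathbb{E}_\epsilon[e^{4\lambda T_{n,m}(\ell)} \mid \bX,\bY,\eta] \leq \exp(8\lambda^2 \sum_i c_i^2)$. Integrating out the $\eta_j$'s, note that
\[
\sum_i c_i^2 \;=\; (nm)^{-2}\!\sum_{j,k} \eta_j \eta_k \Bigl(\sum_i \ell(X_i, Y_j)\ell(X_i, Y_k)\Bigr),
\]
whose diagonal ($j=k$) part is deterministically bounded by $c_\ell^2/(nm)$, while the off-diagonal part is a centered Rademacher quadratic form whose exponential moment can again be controlled by a second application of Hoeffding conditional on $(\bX,\bY)$. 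Gathering these contributions and optimizing in $\lambda$ of order $nmt/c_\ell^2$ produces the claimed bound $\exp(-nmt^2/(32 c_\ell^2))$, with the constant $32$ traceable to the factor $4$ from symmetrization (squared in the exponent) multiplied by the factor $2$ from Hoeffding.

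The main obstacle lies in obtaining the correct $nm$ denominator in the sub-Gaussian parameter. The naive pointwise bound $|c_i| \leq c_\ell/n$ gives only $\sum_i c_i^2 \leq c_\ell^2/n$ almost surely, which after optimization leads to the weaker $nt^2$ scaling (and, by exchanging the roles of the two samples, $mt^2$), not $nmt^2$. To reach the right order one must exploit the near-orthogonality of the $\eta_j$'s in $L_2$, using $\mathbb{E}_\eta\bigl[\sum_i c_i^2 \mid \bX,\bY\bigr] \leq c_\ell^2/(nm)$, and lift this variance-type control to an exponential moment bound, essentially treating $T_{n,m}(\ell)$ as a decoupled Rademacher chaos of order two whose Frobenius norm of coefficients is bounded by $c_\ell/\sqrt{nm}$ and whose $\ell^\infty$-coefficient bound is $c_\ell/(nm)$, in the spirit of a Hanson–Wright-type estimate.
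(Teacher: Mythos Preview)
Your overall strategy---symmetrize via Lemma~\ref{lem:random} with $\Phi(u)=e^{\lambda u}$ and run Chernoff on the randomized statistic $T_{n,m}(\ell)$---is exactly the paper's. The divergence is at the MGF step. The paper does not condition sequentially on $\eta$; it asserts directly that, conditionally on the data,
\[
\mathbb{E}\bigl[e^{4\lambda T_{n,m}(\ell)}\,\big|\, X_1,\dots,X_n,Y_1,\dots,Y_m\bigr]
=\prod_{i=1}^n\prod_{j=1}^m\cosh\!\Bigl(\tfrac{4\lambda\,\ell(X_i,Y_j)}{nm}\Bigr),
\]
in effect treating the $nm$ products $\epsilon_i\eta_j$ as mutually independent Rademacher variables. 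Bounding $\cosh u\le e^{u^2/2}$ then gives $e^{8\lambda^2 c_\ell^2/(nm)}$ at once, and the choice $\lambda=nmt/(16c_\ell^2)$ produces the constant $32$ with no further work.

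Your caution about this step is well placed: the $\epsilon_i\eta_j$ are only pairwise independent, and the displayed factorization is not an identity (already for $n=m=2$ with equal coefficients, a fourth-order expansion shows the true conditional MGF strictly exceeds $\cosh(b)^4$). Your Hanson--Wright route is the honest alternative, but it will not recover a pure sub-Gaussian tail for all $t$: for the bilinear form $T_{n,m}(\ell)$ with coefficient matrix $A$ satisfying $\|A\|_F\le c_\ell/\sqrt{nm}$ and $\|A\|_{\mathrm{op}}\le c_\ell/\sqrt{nm}$, what comes out is a Bernstein-type bound $\exp\bigl(-c\min(nmt^2/c_\ell^2,\sqrt{nm}\,t/c_\ell)\bigr)$, sub-Gaussian only in the range $t\lesssim c_\ell/\sqrt{nm}$. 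In fact the lemma's inequality cannot hold for every $t>0$: with $\ell(x,y)=xy$ and Rademacher marginals one has $U_{n,m}=\bigl(\tfrac1n\sum_i X_i\bigr)\bigl(\tfrac1m\sum_j Y_j\bigr)$ and $\mathbb{P}(U_{n,m}=1)=2^{1-n-m}$, which for $n=m$ large exceeds $e^{-nm/32}$ at $t=1$. So the obstacle you flag is real; the clean conclusion of your argument is the mixed tail above rather than the stated constant, and that is also what one should plug into the downstream chaining.
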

\begin{proof}
Let $t>0$. The proof is based on Chernoff's method. For all $\lambda>0$, we have
\begin{multline}\label{eq0}
	\mathbb{P} \left\{ U_{n,m}(\ell)  \geq t  \right\} \leq \exp\left( -\lambda t +\log\left(\mathbb{E}[\exp(\lambda U_{n,m}(\ell) )]  \right) \right)\\
	\leq  \exp\left( -\lambda t +\log\left(\mathbb{E}[\exp(4\lambda T_{n,m}(\ell) )]  \right) \right),
\end{multline}
using \eqref{eq:symexpsupconv2} with $\Phi(t)=\exp(\lambda t)$. Observe next that we almost-surely
\begin{multline*}
\mathbb{E}[\exp(4\lambda T_{n,m}(\ell) )\mid X_1,\; \ldots,\,X_n,\; Y_1,\; \ldots, \; Y_m]= \\ \prod_{i=1}^n\prod_{j=1}^{m} \frac{e^{ 4\lambda \ell(X_i,Y_j)/(nm) } + e^{ -4\lambda \ell(X_i,Y_j)/(nm )}}{2}\\ \leq \prod_{i=1}^n\prod_{j=1}^{m} e^{8 \lambda^2  \ell^2(X_i,Y_j) /(nm)^2}
 \leq  e^{ 8\lambda^2 c_{\ell}^2/(nm)},
\end{multline*}
using the fact that $(e^u+e^{-u})/2\leq e^{u^2 /2}$ for all $u\in \mathbb{R}$.
Integrating the bound over the $X_i$'s and $Y_j$'s and plugging it next into \eqref{eq0} yields the desired bound when choosing $\lambda=nmt/(16c_{\ell}^2)$. $\square$
\end{proof}
\medskip
Finally, we prove the tail probability version of Lemma \ref{lem:random} stated below.

\begin{lemma}\label{lem:random2}Let $P$ and $Q$ be probability distributions on measurable spaces $\X$ and $\Y$ respectively. Consider the degenerate two-sample $U$-statistic of degree $(1,1)$ \eqref{eq:two_sample_stat} with a bounded kernel $\ell:\X \times \Y \rightarrow \mathbb{R}$ based on the independent $\iid$ random samples $X_1,\;  \ldots,\;  X_n $ and  $Y_1,\;  \ldots,\; Y_m$, drawn from $P$ and $Q$ respectively. Let two sequences of $\iid$ Rademacher variables $\eps_1, \ldots, \eps_n$ and $\eta_1, \ldots, \eta_m$, independent of the $Xi$s and $Yj$s, such that the randomized process \eqref{eq:two_sample_stat_rando} is defined.
	Then we have for all $ t>0$,
	\begin{equation}\label{eq:symexpsupconv}
	\mathbb{P}\left\{ \sup_{\ell\in \L}  \vert U_{n,m}(\ell) \vert \geq 16t \right\} \leq 16\mathbb{P}\left\{ \sup_{\ell\in \L}  \vert T_{n,m}(\ell) \vert \geq t \right\},
	\end{equation}
	assuming that the suprema are measurable and that the expectations exist.
\end{lemma}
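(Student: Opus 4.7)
The plan is to prove Lemma \ref{lem:random2} by mimicking the structure of Lemma \ref{lem:random}, but at the level of tail probabilities instead of convex moments. The overall factor $16 = 4 \times 4$ in the conclusion suggests a two-stage symmetrization: one over the sample $X_1,\ldots,X_n$ and one over $Y_1,\ldots,Y_m$, each contributing a factor of $4$ in the threshold and a factor of $4$ in the probability multiplier. Throughout, the key algebraic input is the degeneracy of the kernel class $\L$.

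First, I condition on $Y_1,\ldots,Y_m$ and write $U_{n,m}(\ell)=(1/n)\sum_{i=1}^{n} h^Y_{\ell}(X_i)$ with $h^Y_{\ell}(x)=(1/m)\sum_{j=1}^{m}\ell(x,Y_j)$. Degeneracy gives $\mathbb{E}_X[h^Y_{\ell}(X)]=0$ almost surely in $Y$. Applying the classical probability-version symmetrization inequality for sums of independent centered processes (obtained via an independent copy $X'_1,\ldots,X'_n$, the identity in distribution $\sum(X_i-X'_i)\overset{d}{=}\sum\varepsilon_i(X_i-X'_i)$, a triangle inequality, and Lévy's maximal inequality to control the copy term by the original one) yields, conditionally on $Y$,
\[
\mathbb{P}\bigl\{\sup_{\ell\in\L}|U_{n,m}(\ell)|>4t\,\big|\,Y_1,\ldots,Y_m\bigr\}\leq 4\,\mathbb{P}\bigl\{\sup_{\ell\in\L}|T^{(1)}_{n,m}(\ell)|>t\,\big|\,Y_1,\ldots,Y_m\bigr\},
\]
where $T^{(1)}_{n,m}(\ell)=(1/nm)\sum_{i,j}\varepsilon_i\ell(X_i,Y_j)$. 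Integrating over $Y$ removes the conditioning.

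Second, I condition on $(X_i,\varepsilon_i)_{i\leq n}$ and write $T^{(1)}_{n,m}(\ell)=(1/m)\sum_{j=1}^{m}g^{\ell}_j(Y_j)$ with $g^{\ell}_j(y)=(1/n)\sum_i\varepsilon_i\ell(X_i,y)$. Degeneracy again gives $\mathbb{E}_Y[g^{\ell}_j(Y)]=0$, so the same symmetrization argument with the Rademacher variables $\eta_j$ produces
\[
\mathbb{P}\bigl\{\sup_{\ell\in\L}|T^{(1)}_{n,m}(\ell)|>4s\bigr\}\leq 4\,\mathbb{P}\bigl\{\sup_{\ell\in\L}|T_{n,m}(\ell)|>s\bigr\}.
\]
Choosing $s=t$ in the second bound and applying the first with threshold $16t$ (i.e.\ $4t$ playing the role of the outer threshold) gives the claimed chain
\[
\mathbb{P}\bigl\{\sup_{\ell\in\L}|U_{n,m}(\ell)|>16t\bigr\}\leq 4\,\mathbb{P}\bigl\{\sup_{\ell\in\L}|T^{(1)}_{n,m}(\ell)|>4t\bigr\}\leq 16\,\mathbb{P}\bigl\{\sup_{\ell\in\L}|T_{n,m}(\ell)|>t\bigr\}.
\]

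The main obstacle is the probability-version symmetrization inequality itself, which is more delicate than its expectation counterpart used in Lemma \ref{lem:random}: Jensen's inequality gives the clean factor $2$ for convex $\Phi$, whereas the tail form requires splitting between the original and the copy (doubling the loss once) and then invoking a Lévy-type maximal inequality to avoid a hidden variance correction (doubling the loss again), explaining why the cost inflates from $2$ to $4$ per stage. Measurability of the suprema over $\L$ is taken for granted in accordance with the measurability convention stated in the body of the paper.
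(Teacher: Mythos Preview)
Your proposal is correct and follows essentially the same route as the paper's proof: both argue by conditioning first on the $Y_j$'s and then on the $(X_i,\varepsilon_i)$'s, applying at each stage a tail-probability symmetrization inequality for centered sums of independent variables that costs a factor $4$ in the threshold and a factor $4$ in the probability, yielding the overall $16$ and $16$. The only difference is cosmetic: the paper invokes the one-stage inequality by citing Lemma~2.7 in \cite{GZ04} and Lemma~3.1 in \cite{Tal94}, whereas you sketch its proof via an independent copy and a L\'evy-type maximal inequality.
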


\begin{proof}
This lemma, bounding the tail probability of $\sup_{\ell\in \L}  \vert U_{n,m}(\ell) \vert $ to that of  $\sup_{\ell\in \L}  \vert T_{n,m}(\ell) \vert $, generalizes Lemma 2.7 in \cite{GZ04} and Lemma 3.1 in \cite{Tal94} to degenerate two-sample $U$-processes. It is proved by applying twice a version of the latter result for independent but non necessarily identically distributed random variables. Indeed, we have: $\forall t>0$,

\begin{multline*}
	\mathbb{P}\left\{ \sup_{\ell\in \L}  \left\vert U_{n,m}(\ell) \right\vert \geq 16t \right\}\\
=	\mathbb{E}\left[ \mathbb{P}\left\{ \sup_{\ell\in \L}  \left\vert \frac{1}{n} \sum_{i=1}^n\left\{\frac{1}{m}\sum_{j=1}^m \ell(X_i,Y_j)\right\}\right\vert \geq 16t \mid Y_1,\; \ldots, \; Y_m \right\}  \right] \\
\leq	4\mathbb{E}\left[ \mathbb{P}\left\{ \sup_{\ell\in \L}  \left\vert \frac{1}{n} \sum_{i=1}^n\left\{\frac{1}{m}\sum_{j=1}^m \epsilon_i\ell(X_i,Y_j)\right\} \right\vert \geq 4t \mid Y_1,\; \ldots, \; Y_m \right\}  \right]\\
=	4\mathbb{E}\left[ \mathbb{P}\left\{ \sup_{\ell\in \L}  \left\vert \frac{1}{m} \sum_{j=1}^m\left\{\frac{1}{n}\sum_{i=1}^n \epsilon_i\ell(X_i,Y_j)\right\}\right\vert \geq 4t \mid (X_1,\epsilon_1)\; \ldots, \; (X_n,\; \epsilon_n ) \right\}  \right]
\\ \leq16 	\mathbb{P}\left\{ \sup_{\ell\in \L}  \left\vert T_{n,m}(\ell) \right\vert \geq t \right\}.
\end{multline*}
 $\square$

\end{proof}
\medskip

The proof relies on the chaining method applied to the process $U_{n,m}(\ell)$ indexed by the class of kernels $\L$, see \textit{e.g.} the argument used to establish Lemma 2.14.9 in \cite{vdVWell96}.
Define the random semi-metric on $\L$ by

\begin{equation}
d_{nm}^2(\ell_1, \ell_2) =\frac{1}{nm}\sum_{i \leq n} \sum_{j \leq m} (\ell_1(X_i,Y_j) - \ell_2(X_i,Y_j) )^2 
\end{equation}
for all kernels $\ell_1$ and $\ell_2$ in $\L$.
For all $q\in \NN^*$, consider a number $k_q \leq (A/\varepsilon_q)^{\mathcal{V}} $ of $L_2$-balls with radius $ \eps_q \leq L \leq 1$ and centers $\ell_{q,k}$, $1\leq k\leq k_q$, \wrt the (random) probability measure $(1/nm)\sum_{i \leq n} \sum_{j \leq m}\delta_{(X_i,Y_j)}$ covering the class $\mathcal{L}$.
Assume that the sequence $\eps_q$ is decreasing as $q$ increases, so that $k_q$ is increasing.
Let $\ell \in \L$, $q\geq 1$ and $\tilde{\ell}_{q}$ be the center of a ball s.t. $ d_{nm}( \ell, \tilde{\ell}_{q})\leq \eps_q$. 
Fixing $q_0\leq q$ in $\NN^*$, the following decomposition holds

\begin{equation*}
U_{n,m}(\ell) = (U_{n,m}(\ell) - U_{n,m}(\tilde{\ell}_{q}) )+ U_{n,m}(\tilde{\ell}_{q_0})  +\sum_{\omega = q_0 +1}^q \left( U_{n,m}(\tilde{\ell}_{\omega})   - U_{n,m}(\tilde{\ell}_{\omega-1})    \right).
\end{equation*}

Observe that, for all $\ell$ in $\mathcal{L}$, we almost-surely have
\begin{equation*}
	\vert U_{n,m}(\ell) - U_{n,m}(\tilde{\ell}_{q})  \vert \leq d_{nm}( \ell, \tilde{\ell}_{q})\leq \eps_q~.
\end{equation*}

The triangular inequality yields 

\begin{multline*}
 \|  U_{n,m}(\ell) \|_{\L} \leq  \eps_q +\max_{1\leq k \leq k_{q_0}} \vert U_{n,m}(\ell_{q_0,k}) \vert 
 +\sum_{\omega = q_0 +1 }^q \vert\vert  U_{n,m}(\tilde{\ell}_{\omega})   - U_{n,m}(\tilde{\ell}_{\omega-1})   \vert\vert_{\mathcal{L}}~,
\end{multline*}
where we used the notation $\vert\vert V\vert\vert_{\mathcal{L}}=\sup_{\ell	\in \mathcal{L}}\vert V(\ell)\vert$ for any real-valued stochastic process $V$ indexed by $\mathcal{L}$.
Considering $\eta_{\omega} >0$ and $\beta>0$ constants such that $\sum_{\omega = q_0+1}^q \eta_{\omega}  + \beta \leq 1$, we have for any $t>\eps_q$:
\begin{multline}\label{eq:triangbound}
	\mathbb{P} \left\{  \|  U_{n,m}(\ell)   \|_{\L}  \geq 16t  \right\} \leq \sum_{k=1}^{k_{q_0}}  \mathbb{P} \left\{ \vert U_{n,m}(\ell_{q_0,k})) \vert   \geq 16t \beta \right\}  \\
	+16\sum_{\omega = q_0 + 1}^q  k_{\omega}^2 \mathbb{E}\left[\sup_{\ell \in \mathcal{L}}  \mathbb{P} \left\{ \vert T_{n,m}(\tilde{\ell}_{\omega}   - \tilde{\ell}_{\omega-1})   \vert  \geq t \eta_{\omega} \mid X_1,\; \ldots,\; X_n,\; Y_1,\; \ldots,\; Y_m \right\}\right],
\end{multline}

using the union bound, Lemma \ref{lem:random2} and observing that the suprema corresponding to the terms of the series are actually maxima taken over at most $k_{\omega}k_{\omega-1}\leq k_{\omega}^2$ elements. Lemma \ref{lemma:hoefS} permits to bound the first term on the right hand side of \eqref{eq:triangbound}:
\begin{equation}\label{eq:basic}
	\sum_{k=1}^{k_{q_0}}  \mathbb{P} \left\{ \vert U_{n,m}(\ell_{q_0,k})) \vert   \geq 16 t \beta \right\} \leq 2 k_{q_0} \exp \left\{ - \frac{8nm(t\beta)^2}{L^2} \right\}~.
\end{equation}

Concerning the second term, notice that
\begin{equation}\label{eq:triang}
d_{nm}(\tilde{\ell}_{\omega}, \tilde{\ell}_{\omega-1})\leq d_{nm}(\ell, \tilde{\ell}_{\omega-1})+d_{nm}(\tilde{\ell}_{\omega}, \ell)\leq 2 \eps_{\omega-1}~.
\end{equation}
Re-using  the start of the argument proving Lemma \ref{lemma:hoefS}, we have: $\forall \lambda>0$,
\begin{multline*}
 \mathbb{P} \left\{  T_{n,m}(\tilde{\ell}_{\omega}-\tilde{\ell}_{\omega-1})   \geq t \eta_{\omega} \mid X_1,\; \ldots,\; X_n,\; Y_1,\; \ldots,\; Y_m \right\}\\
 \leq \exp\left( -\lambda t\eta_{\omega}+\mathbb{E}\left[\exp(\lambda T_{n,m}(\tilde{\ell}_{\omega}-\tilde{\ell}_{\omega-1}) )\mid X_1,\; \ldots, X_n,\; Y_1,\; \ldots,\; Y_m\right]  \right)
\end{multline*}
with probability one. Like in Lemma \ref{lemma:hoefS}'s proof,  we almost-surely have
\begin{multline*}
	\mathbb{E}[\exp(\lambda T_{n,m}(\tilde{\ell}_{\omega}-\tilde{\ell}_{\omega-1}) )\mid X_1,\; \ldots,\,X_n,\; Y_1,\; \ldots, \; Y_m]\leq\\
	\prod_{i=1}^n\prod_{j=1}^{m} e^{ \lambda^2  (\tilde{\ell}_{\omega}-\tilde{\ell}_{\omega-1})^2(X_i,Y_j) /2(nm)^2} \leq  e^{ 2\lambda^2 \epsilon_{\omega-1}^2/(nm)}~.
\end{multline*}
Combining the two bounds above with the union bound, it holds with probability one
\begin{multline}\label{eq:series}
 \mathbb{P} \left\{  \left\vert T_{n,m}(\tilde{\ell}_{\omega}-\tilde{\ell}_{\omega-1}) \right\vert   \geq t \eta_{\omega} \mid X_1,\; \ldots,\; X_n,\; Y_1,\; \ldots,\; Y_m \right\}\leq \\
2 \exp  \left\{ - \frac{nm(t\eta_{\omega})^2}{8\eps_{\omega-1}^2}  \right\}~.
\end{multline}

From \eqref{eq:triangbound}, \eqref{eq:basic} and  \eqref{eq:series}, we deduce that


\begin{multline}\label{eq:triangboun2}
	\mathbb{P} \left\{  \|  U_{n,m}(\ell)   \|_{\L}  \geq 16 t  \right\}\\
	 \leq 2 k_{q_0} \exp \left\{ - \frac{8nm(t\beta)^2}{L^2} \right\} 
	+32 \sum_{\omega = q_0 + 1}^q  k_{\omega} ^2 \exp \left\{ - \frac{nm(t\eta_{\omega})^2}{8\eps_{\omega-1}^2}  \right\} \\
	\leq 2 A^{\V}\eps_{q_0}^{-\V} \exp \left\{ - \frac{8nm(t\beta)^2}{L^2} \right\}
	+32A^{2\V}  \sum_{\omega = q_0 + 1}^q  \eps_{\omega}^{-2\V} \exp \left\{ - \frac{nm(t\eta_{\omega})^2}{8\eps_{\omega-1}^2}  \right\}~.
\end{multline}

Following Lemma 3.2 in \cite{vdG00} and choosing $\eps_{\omega} = 2^{-\omega} L$, $\eta_{\omega} = 2^{-\omega}\sqrt{\omega}/8$, so that $\eta_{\omega+1} / \eps_{\omega} = (1/16L )\sqrt{\omega+1}$, we have

\begin{multline}
\eps_{\omega}^{-2\V} \exp \left\{ - \frac{nm(t\eta_{\omega})^2}{8\eps_{\omega-1}^2}  \right\} 
= L^{-2\V}  \exp \left\{  - (- 2 \V \log(2) + \frac{nmt^2}{4\times8^3L^2} )\omega  \right\} 
\end{multline}

If $nmt^2 >   8^4 \log(2) L^2 \V$, the terms of the series are decreasing \wrt $\omega$ and we upperbound by $K_1L^{-2\V}  \exp \left\{ - nmt^2\omega/(4 \times 8^3L^2)  \right\}$. Problem $2.14.3$ in \cite{vdVWell96} applies for $\omega \in \{q_0 +1 , \ldots, q\}$ with $\psi(\omega) = nmt^2\omega/(4 \times 8^3L^2)$

\begin{multline}
\sum_{\omega = q_0 + 1}^q  \eps_{\omega}^{-2\V} \exp \left\{ - \frac{nm(t\eta_{\omega})^2}{8\eps_{\omega-1}^2}  \right\} \leq K_1 L^{-2\V}  \psi'(q_0 )^{-1}   \exp \left\{ - \psi(q_0) \right\} \\
\leq K_2 L^{-2(\V-1)} \exp \left\{  - \frac{nmt^2}{4 \times 8^3L^2} q_0 \right\} 
\end{multline}
$K_1, \; K_2>0$ constants and $nmt^2\geq1$. 
For $\alpha>0$ large, setting $q_0 = 2 +  \lfloor (nmt^2)^{1/(\alpha-1)}\rfloor$ yields to the upperbound $K_2 L^{-2(\V-1)} \exp \left\{ -  3nmt^2/(4 \times 8^3L^2) \right\} $. For the first tail probability, by setting $\beta = 1/2 - 1/(2nmt^2)$ we obtain an upperbound of similar form 

\begin{multline*}
 A^{\V}\eps_{q_0}^{-\V} \exp \left\{ - \frac{8nm(t\beta)^2}{L^2} \right\} \\
  \leq (A/L)^{\V} \exp \left\{ \V \log(2) (2 + (nmt^2)^{1/(\alpha-1)} )  - \frac{2nmt^2}{L^2} (1- 1/(nmt^2))^2\right\}\\
 \leq (2A/L)^{\V} e^{4/L^2 }  \exp \left\{ \V \log(2) (nmt^2)^{1/(\alpha-1)} - \frac{2nmt^2}{L^2} \right\}
 \\
 \leq  (2A/L)^{\V}e^{4/L^2 }  \exp \left\{ -  \frac{2nmt^2}{L^2} \right\}~,
\end{multline*}

as soon as $nmt^2 > ( \log(2) L^2 \V/2)^{1+\delta}, \; \delta =1 / (\alpha-2) \in (0,1)$ for large $\alpha$. Gathering both upperbounds, Eq. \eqref{eq:triangboun2} yields

\begin{equation}
\mathbb{P} \left\{  \|  U_{n,m}(\ell)   \|_{\L}  \geq t  \right\} \leq K 2^{\V+1}(A/L)^{2\V}e^{4/L^2}\exp \left\{ -  \frac{3nmt^2}{4\times8^3L^2} \right\}~,
\end{equation}

for all $nmt^2 >  \max( 1, 8^4\log(2) L^2 \V ,( \log(2) L^2 \V/2)^{1+\delta}) $, and $K\geq 1+ 16K_2e^{-4}$ constant. Checking lastly that, for all $q\geq 1$
\begin{equation}
8\sum_{\omega = q_0 +1}^q \eta_{\omega}  \leq 8\sum_{\omega = 1}^q \eta_{\omega}  \leq 1 + \int_1^{\infty}2^{-x}\sqrt{x}dx \leq 1 + (\pi/\log(2))^{1/2} \leq 4,
\end{equation}
so that $\sum_{\omega = q_0+1}^q \eta_{\omega}  + \beta \leq 1$ as needed.

\section{Additional Numerical Experiments}\label{ann:expes}

Following Section \ref{sec:num}, this section gathers the numerical results of three models Loc1, Loc3 and Scale2, Scale3, as well as additional experiments regarding 
the difference in performance of the $W$-criteria for the RTB score-generating function, when we vary the rate $u_0$, for both the location (Fig. \ref{fig:roclocrtball}) and the scale (Fig. \ref{fig:rocscalertball}) models.

\paragraph{Location model.} (Fig. \ref{fig:rocloc1}, \ref{fig:rocloc3})

\begin{figure}[!h]
	\centering
	\begin{tabular}{cc}
		\parbox{4cm}{	
			\includegraphics[width=4cm, height=4cm]{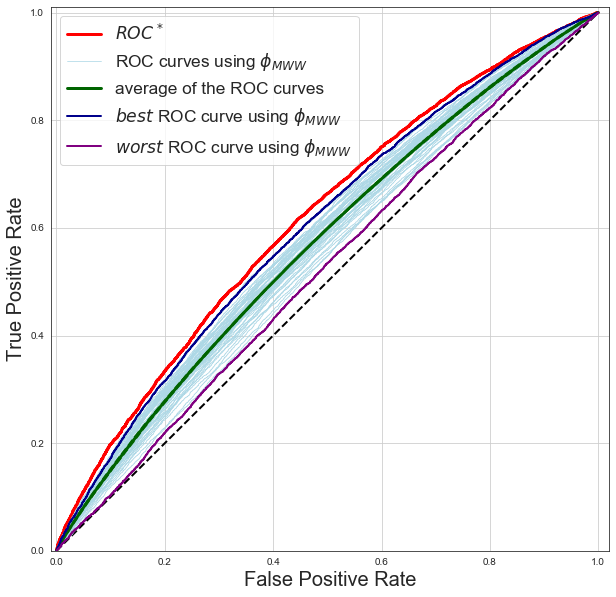}
			\includegraphics[width=4cm, height=4cm]{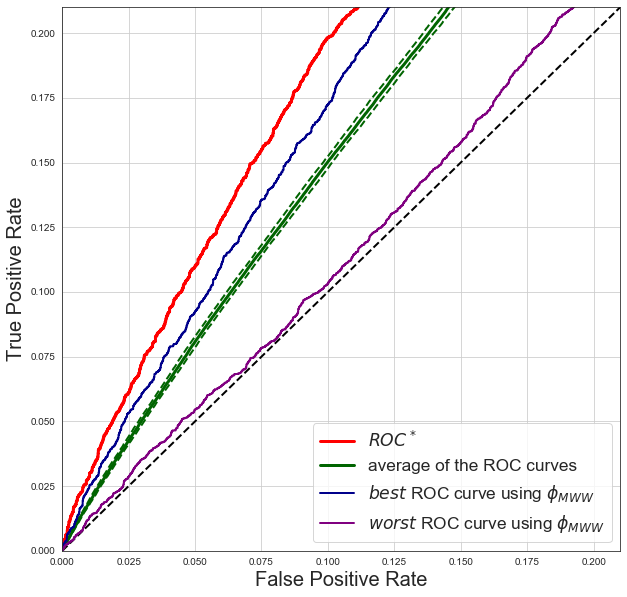}
			{\scriptsize 1.  $\phi_{MWW}(u) = u$ }\\
		}
		\parbox{4cm}{
			\includegraphics[width=4cm, height=4cm]{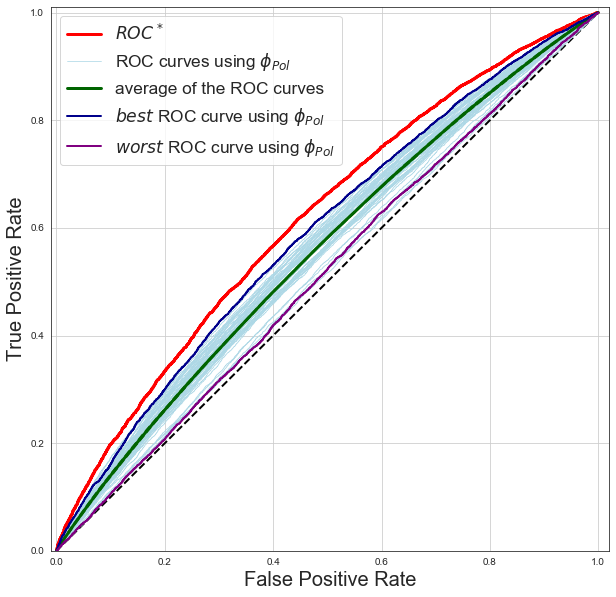}
			\includegraphics[width=4cm, height=4cm]{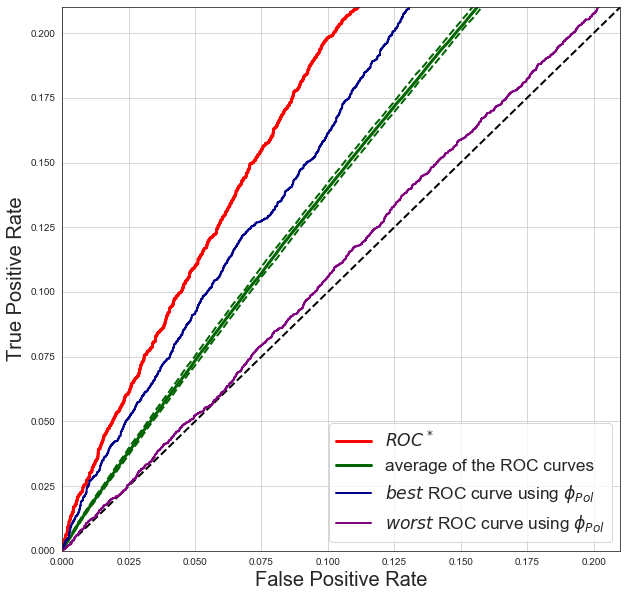}
			{\scriptsize 2.  $\phi_{Pol}(u) = u^3$}\\
		}
		\parbox{4cm}{	
			\includegraphics[width=4cm, height=4cm]{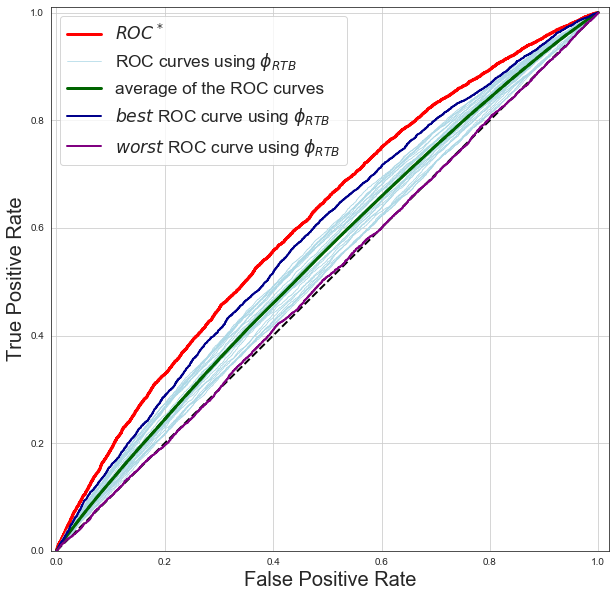}
			\includegraphics[width=4cm, height=4cm]{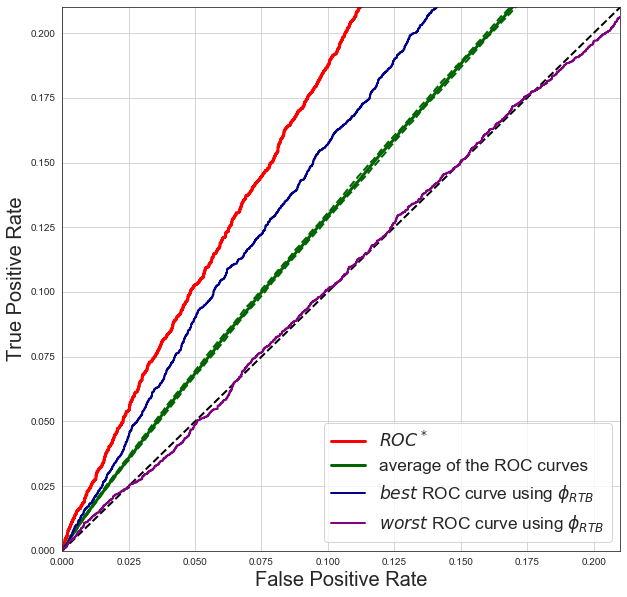}
			{\scriptsize 3. $\phi_{RTB}(u) =  u\mathbb{I}\{u \geq 0.9\}$}\\
		}
		\medskip
	\end{tabular}
	\caption{Empirical $\roc$ curves and average $\roc$ curve for Loc1  ($\varepsilon = 0.10$). Samples are drawn from multivariate Gaussian distributions according to section \ref{sec:synthdata},
		scored with early-stopped GA algorithm's optimal parameter for the class of scoring functions. Hyperparameters: $u_0 = 0.9$, $q = 3$, $B = 50$, $T = 50$. Parameters for the training set: $n=m=150$; $d=15$; for the testing set:  $n=m=10^6$; $d=15$.
		Figures $1, 2, 3$ correspond \resp to the models MMW, Pol, RTB. Light blue curves are the $B(=50)$ $\roc$ curves that are averaged in green (solid line) with $+/-$ its standard deviation (dashed green lines). The dark blue and purple curves correspond to the best and worst scoring functions in the sense of minimization and maximization of the generalization error among the $B$ curves. The red curve corresponds to $\roc^*$.}
	\label{fig:rocloc1}
\end{figure}

\begin{figure}[!h]
	\centering
	\begin{tabular}{cc}
		\parbox{4cm}{	
			\includegraphics[width=4cm, height=4cm]{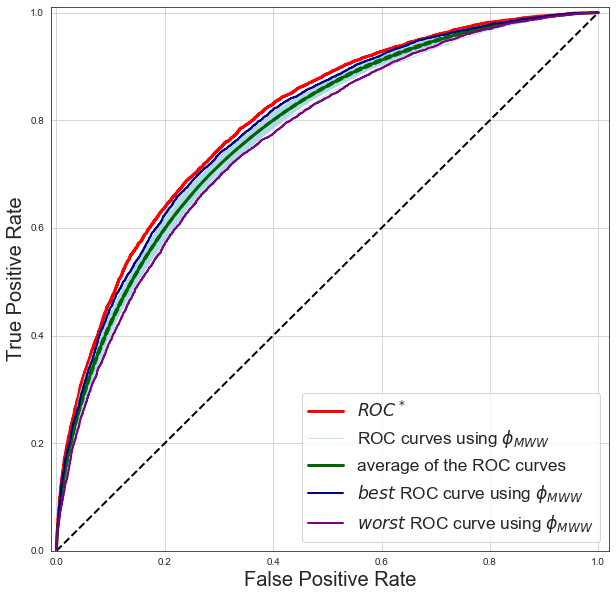}
			\includegraphics[width=4cm, height=4cm]{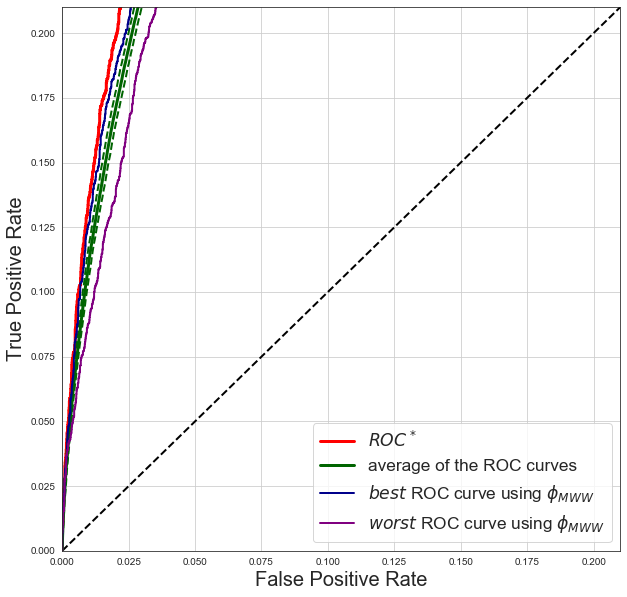}
			{\scriptsize 1.  $\phi_{MWW}(u) = u$ }\\
		}
		\parbox{4cm}{
			\includegraphics[width=4cm, height=4cm]{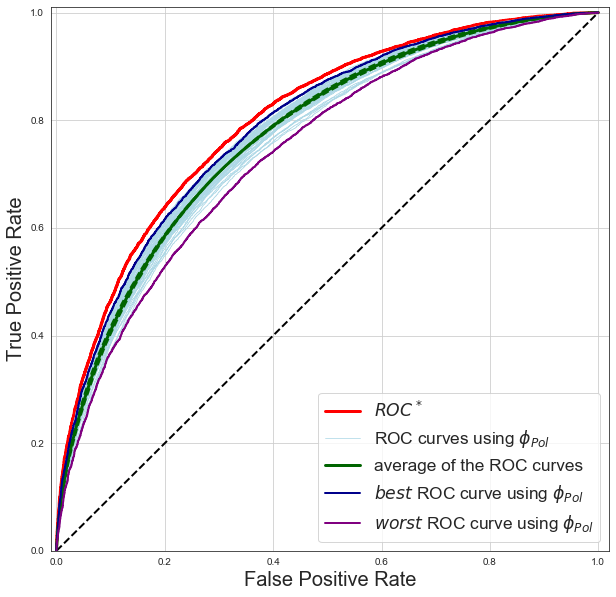}
			\includegraphics[width=4cm, height=4cm]{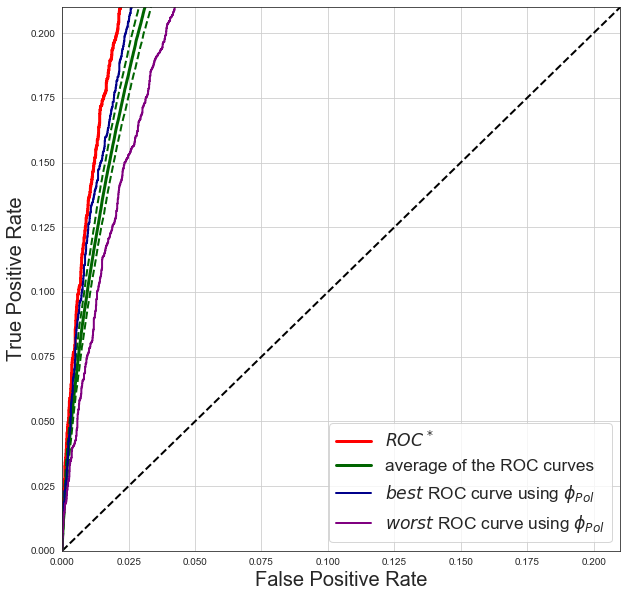}
			{\scriptsize 2.  $\phi_{Pol}(u) = u^3$}\\
		}
		\parbox{4cm}{	
			\includegraphics[width=4cm, height=4cm]{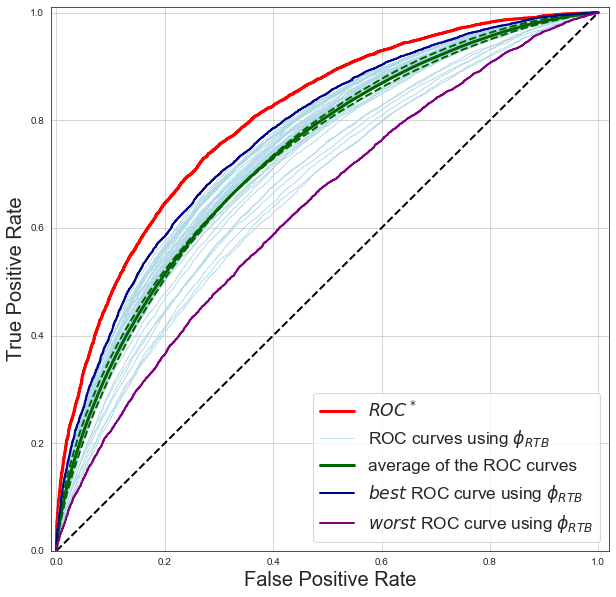}
			\includegraphics[width=4cm, height=4cm]{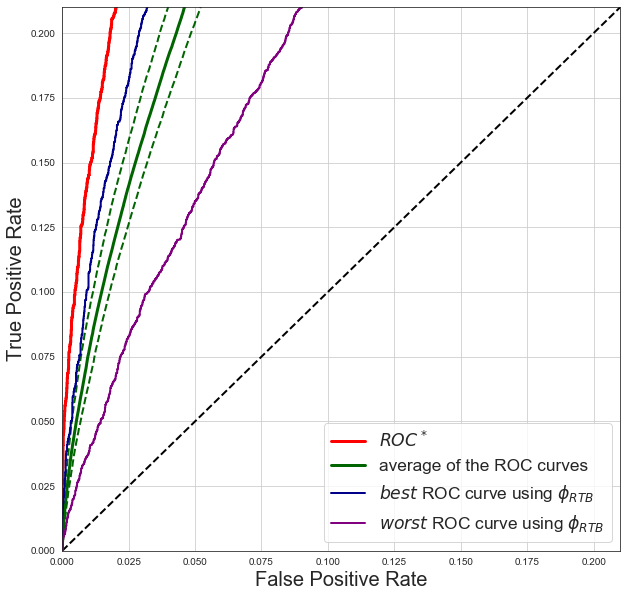}
			{\scriptsize 3. $\phi_{RTB}(u) =  u\mathbb{I}\{u \geq 0.9\}$}\\
		}
		\medskip
	\end{tabular}
	\caption{Empirical $\roc$ curves and average $\roc$ curve for Loc3 ($\varepsilon = 0.30$). Samples are drawn from multivariate Gaussian distributions according to section \ref{sec:synthdata},
		scored with early-stopped GA algorithm's optimal parameter for the class of scoring functions. Hyperparameters: $u_0 = 0.9$, $q = 3$, $B = 50$, $T = 50$. Parameters for the training set: $n=m=150$; $d=15$; for the testing set:  $n=m=10^6$; $d=15$.
		Figures $1, 2, 3$ correspond \resp to the models MMW, Pol, RTB. Light blue curves are the $B(=50)$ $\roc$ curves that are averaged in green (solid line) with $+/-$ its standard deviation (dashed green lines). The dark blue and purple curves correspond to the best and worst scoring functions in the sense of minimization and maximization of the generalization error among the $B$ curves. The red curve corresponds to $\roc^*$.}
	\label{fig:rocloc3}
\end{figure}

\paragraph{Scale model.} (Fig. \ref{fig:rocscale2}, \ref{fig:rocscale3})

\begin{figure}[!h]
	\centering
	\begin{tabular}{cc}
		\parbox{4cm}{	
			\includegraphics[width=4cm, height=4cm]{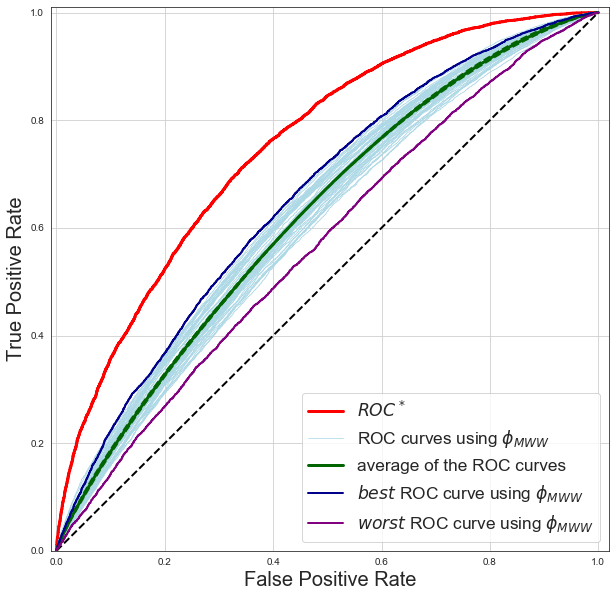}
			\includegraphics[width=4cm, height=4cm]{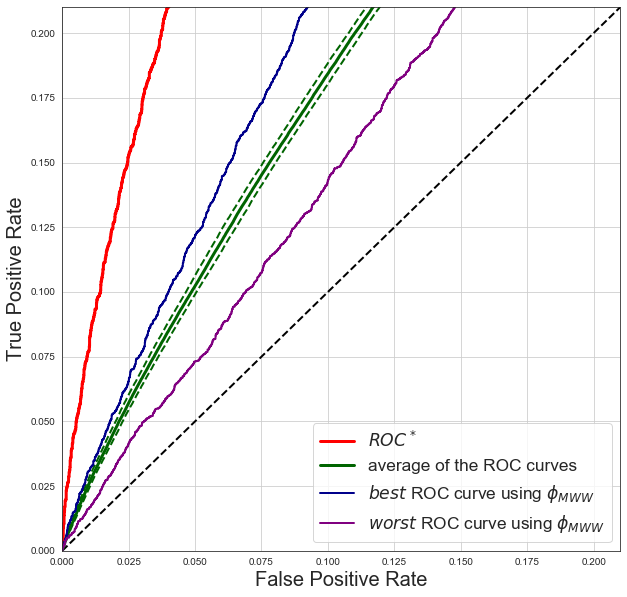}
			{\scriptsize 1.  $\phi_{MWW}(u) = u$ }\\
		}
		\parbox{4cm}{
			\includegraphics[width=4cm, height=4cm]{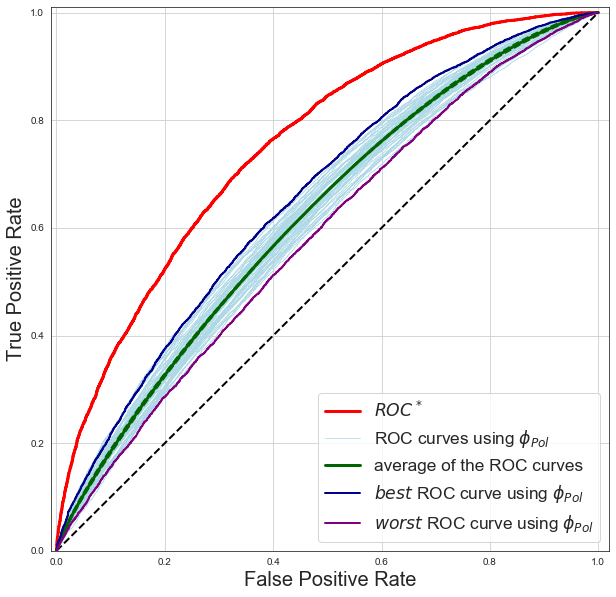}
			\includegraphics[width=4cm, height=4cm]{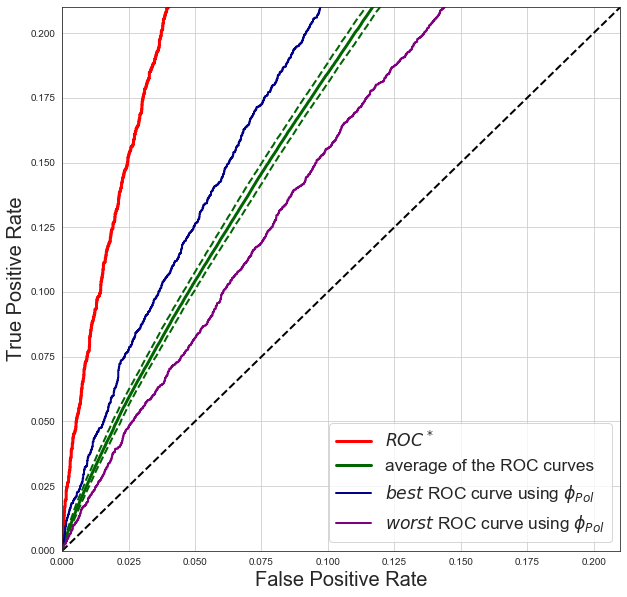}
			{\scriptsize 2.  $\phi_{Pol}(u) = u^3$}\\
		}
		\parbox{4cm}{	
			\includegraphics[width=4cm, height=4cm]{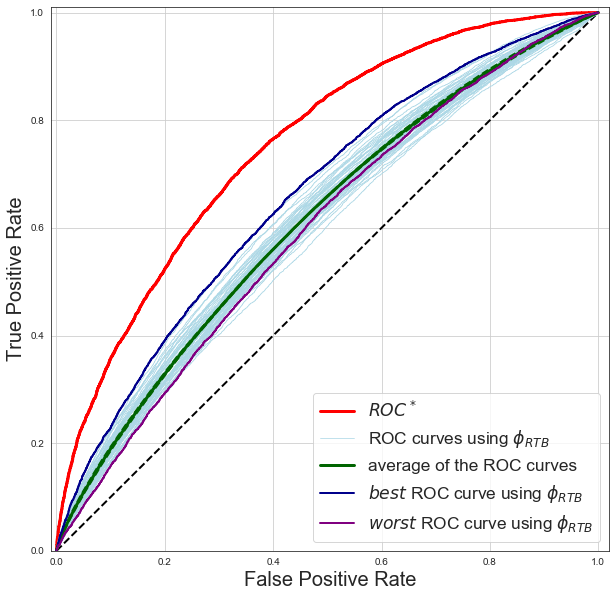}
			\includegraphics[width=4cm, height=4cm]{scalemww90_zoom}
			{\scriptsize 3. $\phi_{RTB}(u) =  u\mathbb{I}\{u \geq 0.9\}$}\\
		}
		\medskip
	\end{tabular}
	\caption{Empirical $\roc$ curves and average $\roc$ curve for Scale2 ($\varepsilon = 0.90$). Samples are drawn from multivariate Gaussian distributions according to section \ref{sec:synthdata},
		scored with early-stopped GA algorithm's optimal parameter for the class of scoring functions. Hyperparameters: $u_0 = 0.9$, $q = 3$, $B = 50$, $T = 50$. Parameters for the training set: $n=m=150$; $d=15$; for the testing set:  $n=m=10^6$; $d=15$.
		Figures $1, 2, 3$ correspond \resp to the models MMW, Pol, RTB. Light blue curves are the $B(=50)$ $\roc$ curves that are averaged in green (solid line) with $+/-$ its standard deviation (dashed green lines). The dark blue and purple curves correspond to the best and worst scoring functions in the sense of minimization and maximization of the generalization error among the $B$ curves. The red curve corresponds to $\roc^*$.}
	\label{fig:rocscale2}
\end{figure}

\begin{figure}[!h]
	\centering
	\begin{tabular}{cc}
		\parbox{4cm}{	
			\includegraphics[width=4cm, height=4cm]{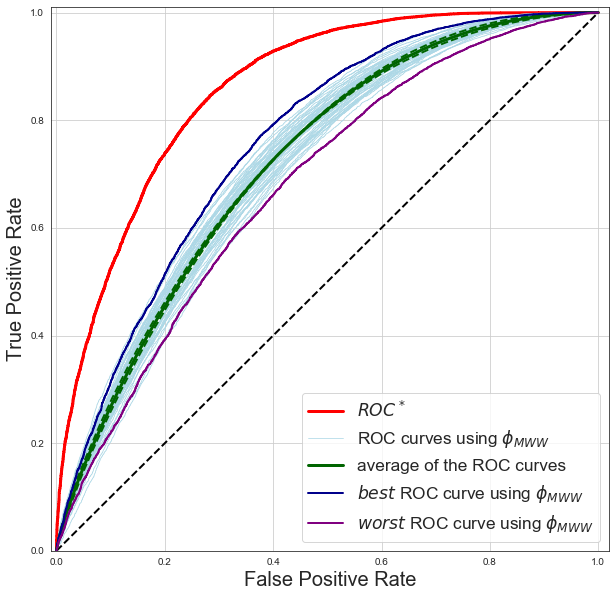}
			\includegraphics[width=4cm, height=4cm]{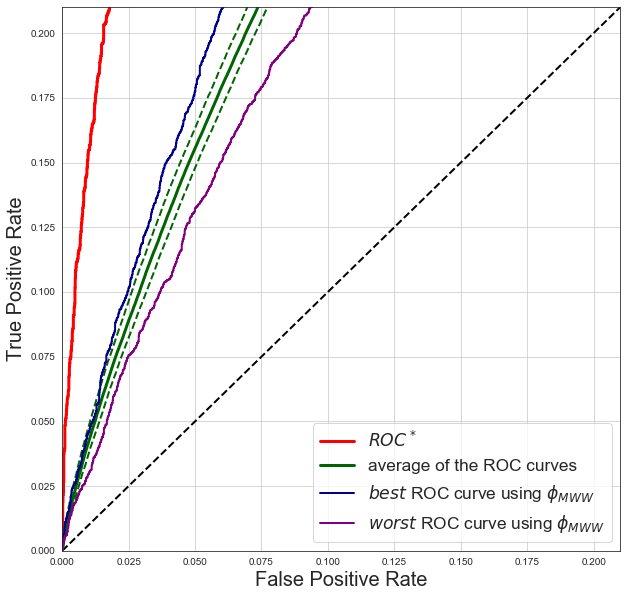}
			{\scriptsize 1.  $\phi_{MWW}(u) = u$ }\\
		}
		\parbox{4cm}{
			\includegraphics[width=4cm, height=4cm]{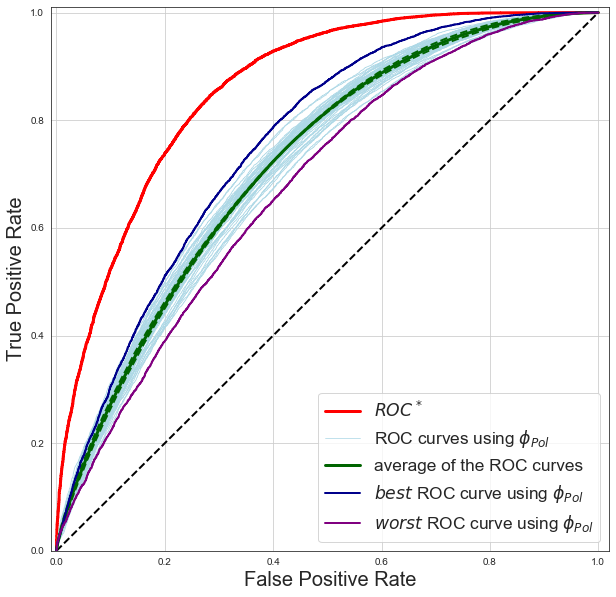}
			\includegraphics[width=4cm, height=4cm]{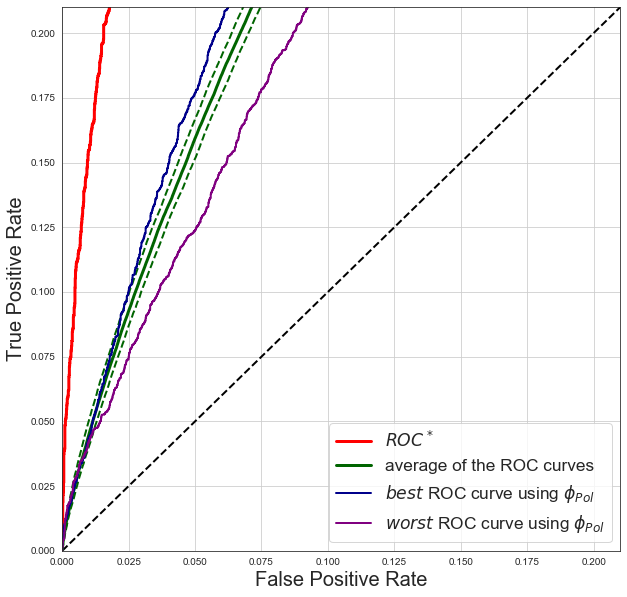}
			{\scriptsize 2.  $\phi_{Pol}(u) = u^3$}\\
		}
		\parbox{4cm}{	
			\includegraphics[width=4cm, height=4cm]{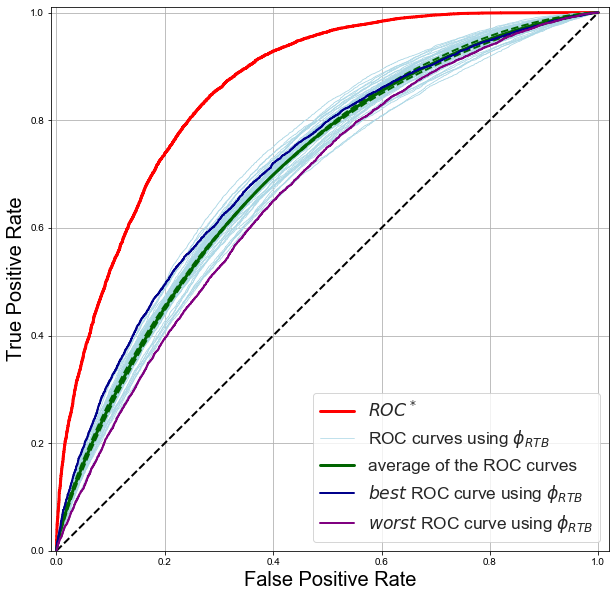}
			\includegraphics[width=4cm, height=4cm]{scalemww110_zoom}
			{\scriptsize 3. $\phi_{RTB}(u) =  u\mathbb{I}\{u \geq 0.9\}$}\\
		}
		\medskip
	\end{tabular}
	\caption{Empirical $\roc$ curves and average $\roc$ curve for Scale3 ($\varepsilon = 1.10$). Samples are drawn from multivariate Gaussian distributions according to section \ref{sec:synthdata},
		scored with early-stopped GA algorithm's optimal parameter for the class of scoring functions. Hyperparameters: $u_0 = 0.9$, $q = 3$, $B = 50$, $T = 50$. Parameters for the training set: $n=m=150$; $d=15$; for the testing set:  $n=m=10^6$; $d=15$.
		Figures $1, 2, 3$ correspond \resp to the models MMW, Pol, RTB. Light blue curves are the $B(=50)$ $\roc$ curves that are averaged in green (solid line) with $+/-$ its standard deviation (dashed green lines). The dark blue and purple curves correspond to the best and worst scoring functions in the sense of minimization and maximization of the generalization error among the $B$ curves. The red curve corresponds to $\roc^*$.}
	\label{fig:rocscale3}
\end{figure}

\paragraph{Comparison of three RTB score-generating functions for two location models.}(Fig. \ref{fig:roclocrtball})

\begin{figure}[!h]
	\centering
	\begin{tabular}{cc}
		\parbox{5.5cm}{	
	\includegraphics[width=5.5cm, height=5cm]{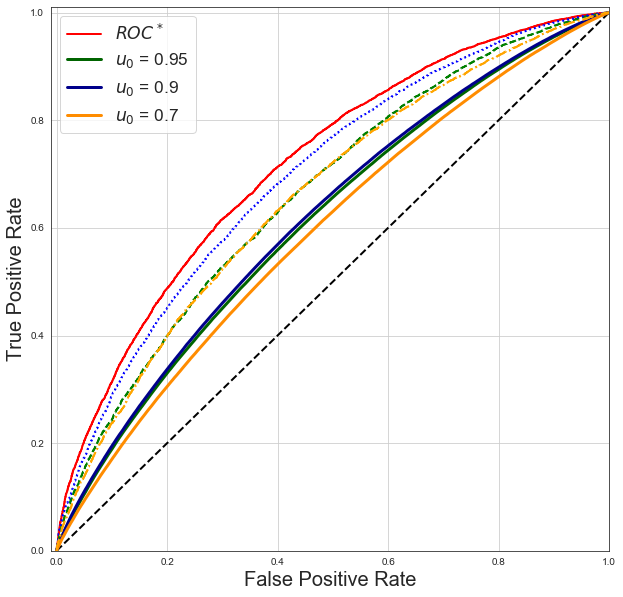}		
	\includegraphics[width=5.5cm, height=5cm]{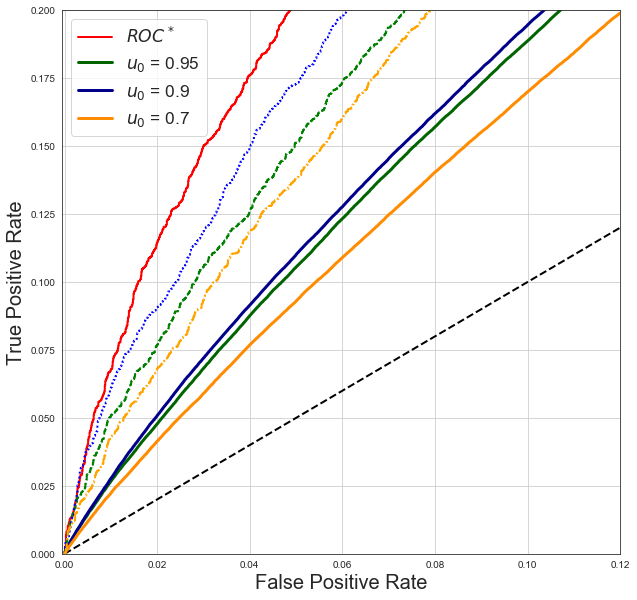}		
	{\scriptsize 1. Loc2, $\varepsilon = 0.2$}\\
}
\parbox{5.5cm}{
	\includegraphics[width=5.5cm, height=5cm]{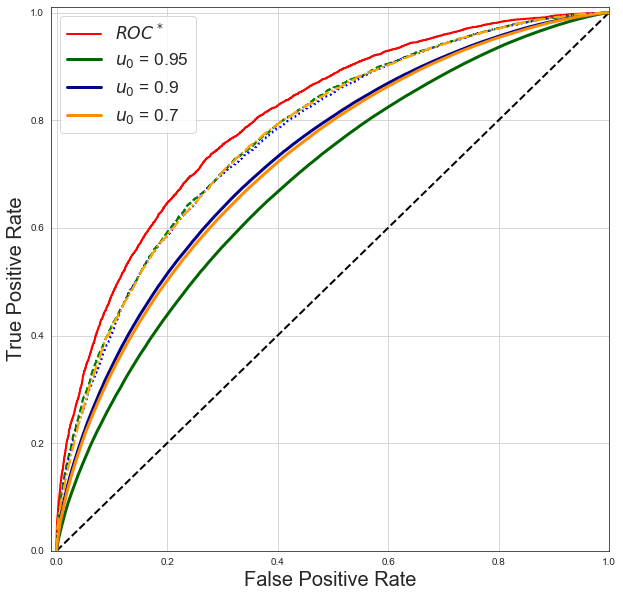}		
	\includegraphics[width=5.5cm, height=5cm]{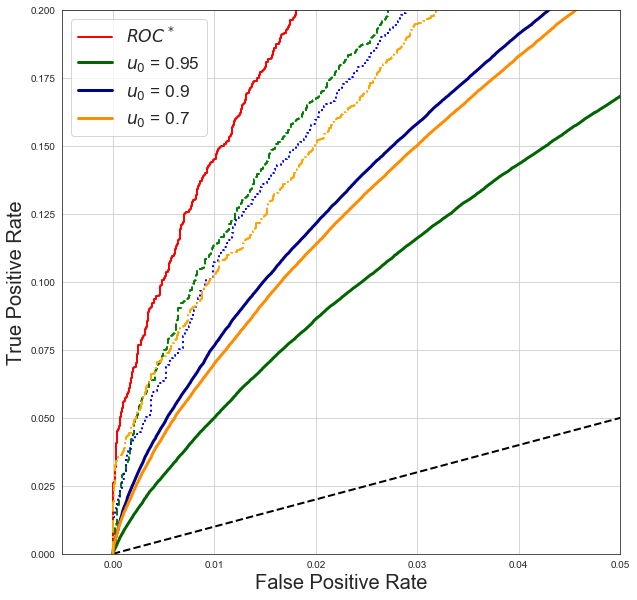}		
	{\scriptsize 2. Loc3, $\varepsilon = 0.3$}\\
}
		\medskip
	\end{tabular}
\caption{Comparison of three RTB models. Average of the $\roc$ curves (solid line), \textit{best} $\roc$ curves (dashed line) for the two location models Loc2 and Loc3. In green for $u_0 = 0.95$, blue for $u_0 = 0.90$, orange for $u_0 = 0.70$,  red for $\roc^*$. Samples are drawn from multivariate Gaussian distributions according to section \ref{sec:synthdata}, scored with early-stopped GA algorithm's optimal parameter for the class of scoring functions and averaged after $B$ loops. Hyperparameters: $B = 50$, $T = 50$. Parameters for the training set: $n=m=150$; $d=15$; for the testing set:  $n=m=10^6$; $d=15$.}
	\label{fig:roclocrtball}
\end{figure}

\paragraph{Comparison of three RTB score-generating functions for the scale model.}(Fig. \ref{fig:rocscalertball})
\begin{figure}[!h]
	\centering
	\begin{tabular}{cc}
		\parbox{5.5cm}{	
			\includegraphics[width=5.5cm, height=5cm]{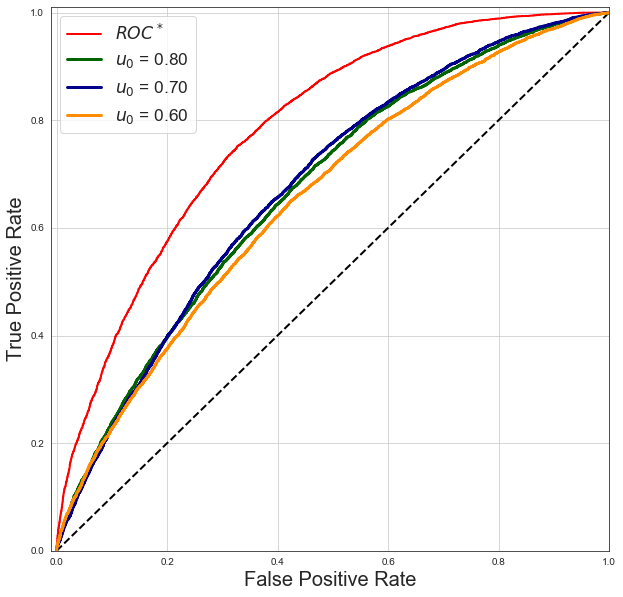}		
		}
		\parbox{5.5cm}{
			\includegraphics[width=5.5cm, height=5cm]{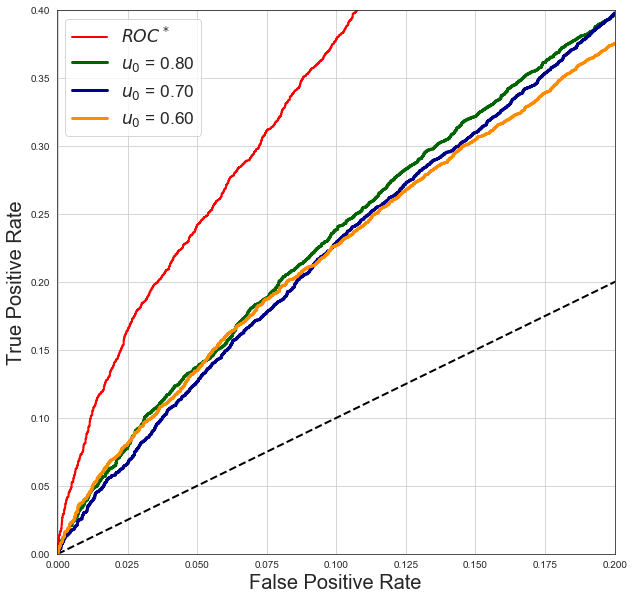}		
		}
		\medskip
	\end{tabular}
	\caption{Comparison of three RTB models. \textit{Best} $\roc$ curves for the Scale2 model. In green for $u_0 = 0.80$, orange for $u_0 = 0.70$, green for $u_0 = 0.60$, red for $\roc^*$. Samples are drawn from multivariate Gaussian distributions according to section \ref{sec:synthdata}, scored with early-stopped GA algorithm's optimal parameter for the class of scoring functions and averaged after $B$ loops. Hyperparameters: $B = 50$, $T = 70$. Parameters for the training set: $n=m=150$; $d=15$; for the testing set:  $n=m=10^6$; $d=15$.}
	\label{fig:rocscalertball}
\end{figure}



\vskip 0.2in
\bibliography{References_ranking1,References_ranking2}
\bibliographystyle{plain}

\end{document}